\newcommand{\cl}{\textrm{cl}}
\newcommand{\onto}{\twoheadrightarrow}
\newcommand{\tri}{\vartriangleleft}
\newcommand{\vsubs}{\rotatebox[origin=c]{90}{$\subset$}}
\newtheorem*{claimn}{Claim}
\newtheorem{theorem}{Theorem}[section]
\newtheorem{definition}{Definition}[section] 
\newtheorem{lemma}[theorem]{Lemma}     
\newtheorem{corollary}[theorem]{Corollary}
\newtheorem{fact}[theorem]{Fact}
\newtheorem{dfn}[theorem]{Definition}
\newtheorem{obs}[theorem]{Observation}
\newtheorem{claim}{Claim}[theorem]
\newcommand{\uhr}{\upharpoonright}
\def\myheads#1;#2;{
\pagestyle{myheadings}
\markboth{{\sc\hfill #1\hfill\protect\makebox[0cm][r]{\rm\today}}}
{{\sc\protect\makebox[0cm][l]{\rm\today}\hfill #2\hfill}}
}
\newif\ifdeveloping
\newif\ifcommented
\newcommand{\comm}[1]{}
\renewcommand{\comm}[1]{
\fbox{\fbox{\begin{minipage}{300pt}#1\end{minipage}}
}}
\def\br#1;#2;{\bigl[ {#1} \bigr]^ {#2} }
\newcommand{\mc}[1]{\mathcal{#1}}
\newcommand{\mbb}[1]{\mathbb{#1}}
\newcommand{\mf}[1]{\mathfrak{#1}}
\newcommand{\uhp}{\upharpoonright}
\newcommand{\omg}{{\omega_1}}
\newcommand{\cfc}{{\chi}_{CF}}
\newcommand{\lex}{<_{\scalebox{0.8}{\textrm{lex}}}}
\newcommand{\gr}{G=(V,E)}
\newcommand{\setm}{\setminus}
\newcommand{\subs}{\subset}
\newcommand{\cf}{\textrm{cf}}
\newcommand{\dom}{\textrm{dom}}
\newcommand{\ran}{\textrm{ran}}
\newcommand{\oo}{\omega}
\def\<{\left\langle}
\def\>{\right\rangle}
\def\br#1;#2;{\bigl[ {#1} \bigr]^ {#2} }
\definecolor{cthm}{rgb}{0,0,0.7} 
\definecolor{cdef}{rgb}{1,0,0} 
\definecolor{midg}{rgb}{0,0.7,0} 
\definecolor{cemph}{rgb}{0,0,1} 
\definecolor{depg}{rgb}{0,0.5,0} 
\author[D. T. Soukup]{D\'aniel T. Soukup}
  \address[D.T. Soukup]{Universität Wien,
 Kurt Gödel Research Center for Mathematical Logic, Austria}
  \email[Corresponding author]{daniel.soukup@univie.ac.at}
\urladdr{http://www.logic.univie.ac.at/$\sim  $soukupd73/}
\author[L. Soukup]{Lajos Soukup}
\address[L. Soukup]{Alfr{\'e}d R{\'e}nyi Institute of Mathematics, Hungarian Academy of Sciences, Budapest, Hungary  }
\email{soukup@renyi.hu}
\urladdr{http://www.renyi.hu/$\sim  $soukup}
\subjclass[2010]{03E05, 03C98, 05C63, 03E35, 54A35}
\keywords{elementary submodels, Davies-tree, clouds, chromatic number, almost disjoint, Bernstein, Cantor, coloring, splendid, countably closed}
\title{Infinite combinatorics plain and simple}
\date{\today}
\begin{document}
\maketitle

\begin{abstract}

We explore a general method based on trees of elementary submodels in order to present highly simplified proofs to numerous results in infinite combinatorics. 
While countable elementary submodels have been employed in such settings already, we significantly broaden this framework by developing the corresponding technique for countably closed models of size continuum. The applications range from various theorems on paradoxical decompositions of the plane, to coloring sparse set systems, results on graph chromatic number and  constructions from point-set topology. Our main purpose is to demonstrate the ease and wide applicability of this method in a form accessible to anyone with a basic background in set theory and logic.

\end{abstract}

\tableofcontents


\section{Introduction}

Solutions to combinatorial problems often follow the same head-on approach: enumerate certain objectives and then inductively meet these goals. Imagine that you are asked to color the points of a topological space with red and blue so that both colors appear on \emph{any} copy of the Cantor-space in $X$. So, one lists the Cantor-subspaces and inductively declares one point red and one point blue from each; this idea, due to Bernstein, works perfectly if $X$ is small i.e. size at most the continuum. However, for larger spaces, we might run into the following problem: after continuum many steps, we could have accidentally covered some Cantor-subspace with red points only. So, how can we avoid such a roadblock?

The methods to meet the goals in the above simple solution scheme vary from problem to problem, however the techniques for \emph{finding the right enumeration} of infinitely or uncountably many objectives frequently involve the same idea. In particular, a recurring feature is to write our set of objectives $\mc X$ as a union of smaller pieces $\langle \mc X_\alpha:\alpha<\kappa\rangle$ so that each $\mc X_\alpha$ resembles the original structure $\mc X$. This is what we refer to as a \emph{filtration}. In various situations, we need the filtration to consist of countable sets; in others, we require that $\mc X_\alpha\subseteq \mc X_\beta$ for $\alpha<\beta<\kappa$. In the modern literature, the sequence $\langle \mc X_\alpha:\alpha<\kappa\rangle$ is more than often defined by intersecting $\mc X$ with an increasing chain of countable elementary submodels; in turn, elementarity  allows properties of $\mc X$ to reflect. 

The introduction of elementary submodels to solving combinatorial problems was truly revolutionary. It provided deeper insight and simplified proofs to otherwise technical results. 
Nonetheless, note that any set $\mc X$ which is covered by an increasing family of countable sets must have size at most $\aleph_1$, a rather serious limitation even when considering problems arising from the reals. Indeed, this is one of the reasons that the assumption $2^{\aleph_0}=\aleph_1$, i.e. the Continuum Hypothesis, is so ubiquitous when dealing with uncountable structures.

On the other hand, several results which seemingly require the use of CH can actually be proved without any extra assumptions. So now the question is, how can we define reasonable filtrations by countable sets to cover structures of size bigger than $\aleph_1$? It turns out that one can relax the assumption of the filtration being increasing in a way which still allows many of our usual arguments for chains to go through. This is done by using a \emph{tree of elementary submodels} rather than chains, an idea which we believe originally appeared in a paper of R. O. Davies \cite{davies} in the 1960s.

Our first goal will be to present Davies' technique in detail using his original result and some other simple and, in our opinion, entertaining new applications. However, this will not help in answering the question from the first paragraph. So, we develop the corresponding technique based on \emph{countably closed elementary submodels of size continuum}. This allows us to apply Davies' idea in a much broader context, in particular, to finish Bernstein's argument for coloring topological spaces. As a general theme, we present simple results answering natural questions from combinatorial set theory; in many cases, our new proofs replace intimidating and technically involved arguments from the literature. We hope to do all this while keeping the paper self contained and, more importantly, accessible to anyone with a basic background in set theory and logic.

 Despite its potential, Davies' method is far from common knowledge even today, though we hope to contribute to changing this. In any case, we are not the first to realize the importance of this method: S. Jackson and D. Mauldin used the same technique to solve the famous Steinhaus tiling problem \cite{jackson}; also, such filtrations were successfully applied and popularized by D. Milovich \cite{milovich,milovichFM1,milovichFM2, milovichFM3} under the name of (long) $\omg$-approximation sequences. In particular, the authors learned about this beautiful technique from Milovich so we owe him a lot.

\smallskip

The structure of our paper is the following: we start by looking at a theorem of W. Sierpinski to provide a mild introduction to  elementary submodels in Section \ref{sec:case}. Then,  in Section \ref{sec:dtree}, we define our main objects of study: the trees of elementary submodels we call Davies-trees; in turn, we present Davies' original application. We continue with four further (independent) applications of varying difficulty: in Sections \ref{sec:disjointness} and \ref{sec:cffree}, we look at almost disjoint families of countable sets and conflict-free colorings. Next, we present a fascinating result of P. Komj\'ath and J. Schmerl in Section \ref{sec:clouds}: lets say that $A\subs \mbb R^2$ is a \emph{cloud} iff every line $\ell$ through a fixed point $a$ intersects $A$ in a finite set only. Now, how many clouds can cover the plane? We certainly need at least two, but the big surprise is the following: the cardinal arithmetic assumption $2^{\aleph_0}\leq \aleph_m$ is equivalent to $m+2$ clouds covering $\mbb R^2$. In our final application of regular Davies-trees, in Section \ref{sec:chrom}, we will show that any graph with uncountable chromatic number necessarily contains highly connected subgraphs with uncountable chromatic number. 

The second part of our paper deals with a new version of Davies' idea, which we dubbed \emph{high Davies-trees}. High Davies-trees will be built from countably closed models $M$ i.e. $A\in M$ whenever $A$ is a countable subset of $M$. This extra assumption implies that $M$ has size at least $\mf c$ but can be extremely useful when considering problems which involve going through all countable subsets of a structure $\mc X$. So our goal will be to find a nice filtration $\langle \mc X_\alpha:\alpha<\kappa\rangle$ of a structure $\mc X$ \emph{simultaneously} with its countable subsets; while $\mc X$ can be arbitrary large, each $\mc X_\alpha$ will have size continuum only.

In Section \ref{sec:high}, we introduce high Davies-trees precisely. In contrast to regular Davies-trees which exist in ZFC, we do need extra set-theoretic assumptions to construct high Davies-trees: a weak version of GCH and Jensen's square principle. This construction is carried out only in Section \ref{sec:appendix} as we would like to focus on applications first. We mention that the results of Section \ref{sec:morechrom}, \ref{sec:freese} and \ref{sec:top} show that high Davies-trees might not exist in some models of GCH and so extra set theoretic assumptions are necessary to construct these objects.

Now, our main point is that high Davies-trees allow us to provide clear presentation to results which originally had longer and sometimes fairly intimidating proofs. In particular, in Section \ref{sec:B}, we prove a strong form of Bernstein's theorem: any Hausdorff topological space $X$ can be colored by continuum many colors so that each color appears on any copy of the Cantor-space in $X$. Next, in Section \ref{sec:saturated}, we present a new proof that there are saturated almost disjoint families in $[\kappa]^\oo$ for any cardinal $\kappa$. In our last application in Section \ref{sec:top}, we show how to easily construct nice locally countable, countably compact topological spaces of arbitrary large size. All Sections through \ref{sec:morechrom} to \ref{sec:top} can be read independently after Section  \ref{sec:high}.

Compared to the original statement of the above results, we only require the existence of appropriate high Davies-trees instead of various $V=L$ --like assumptions. This supports our belief that the existence of high Davies-trees can serve as a practical new axiom or blackbox which captures certain useful, but otherwise technical, combinatorial consequences of $V=L$ in an easily applicable form. The obvious upside being that anyone can apply Davies-trees without familiarity with the constructible universe or square principles.

\smallskip

Ultimately, Davies-trees will not solve open problems magically; indeed, we mention still unanswered questions in almost each section. But hopefully we managed to demonstrate that Davies-trees do provide an invaluable tool for understanding the role of CH and $V=L$ in certain results, and for a revealing, modern presentation of otherwise technically demanding arguments.

\smallskip

\textbf{Disclaimer.} We need to point out that it is not our intention to give a proper introduction to elementary submodels, with all the prerequisites in logic and set theory, since this has been done in many places already. If this is the first time the kind reader encounters elementary submodels, we recommend the following sources: Chapter III.8 in K. Kunen's book \cite{kunen} for classical applications and W. Just, M. Weese \cite{justel} for a more lengthy treatment; A. Dow \cite{dowel} and S. Geschke \cite{gesel} survey applications of elementary submodels in topology; a paper by the second author \cite{soukel} gives all the required background in logic with a focus on graph theory. Reading (the first few pages of) any of the above articles will give the additional background for the following sections to anyone with a set theory course already under his or her belt. Nonetheless, an informal introduction is included in Section \ref{sec:case}.

\smallskip

 We use standard notations following \cite{kunen}. ZFC denotes the usual Zermelo-Fraenkel axioms of set theory together with the Axiom of Choice. We use $\mf c$ to denote  $2^{\aleph_0}$, the cardinality of $\mbb R$. CH denotes the Continuum Hypothesis i.e. $\mf c=\aleph_1$. We say that GCH holds (i.e. the generalized Continuum Hypothesis) if $2^\lambda=\lambda^+$ for all infinite $\lambda$.

\section{A case study}\label{sec:case}

We begin by examining a result of Sierpinski from the 1930s to demonstrate the use of filtrations. W. Sierpinski produced a myriad of results \cite{sierp_book} relating CH to  various properties of the reals. Some properties were proved to be equivalent to CH; for others, like the theorem below, Sierpinski could not decide if an actual equivalence holds.

\begin{theorem}[\cite{sierp}]\label{sierp} CH implies that $\mbb R^2$ can be covered by countably many rotated graphs of functions i.e. given distinct lines $\ell_i$ for $i<\oo$ through the origin there are sets $A_i$ so that $\mbb R^2=\bigcup \{A_i:i<\oo\}$ and  $A_i$ meets each line perpendicular to $\ell_i$ in at most one point.
\end{theorem}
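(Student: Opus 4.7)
The plan is to build a colouring $c\colon \mathbb{R}^2 \to \omega$ and set $A_i := c^{-1}(i)$, using that $A_i$ meets every line perpendicular to $\ell_i$ in at most one point precisely when no two points of $A_i$ share such a perpendicular. Under CH, fix a continuous $\in$-chain $\langle M_\alpha : \alpha < \omega_1\rangle$ of countable elementary submodels of some sufficiently large $H(\theta)$, each containing the sequence $\langle \ell_i : i<\omega\rangle$ as an element, and arranged so that $\mathbb{R}^2 \subseteq \bigcup_{\alpha<\omega_1} M_\alpha$. Now define $c$ by recursion on $\alpha$: at stage $\alpha$ enumerate the countable set $(M_{\alpha+1}\setminus M_\alpha)\cap \mathbb{R}^2$ as $\{p_n:n<\omega\}$ and colour each $p_n$ with some $c(p_n)\in\omega$ avoiding all ``forbidden'' colours, where an index $i$ is forbidden for a point $p$ if some previously coloured $q$ with $c(q)=i$ lies on the line $L_i(p)$ through $p$ perpendicular to $\ell_i$.

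The heart of the argument is the following elementarity claim: for any $p\in \mathbb{R}^2 \setminus M_\alpha$, at most one index $i<\omega$ satisfies $L_i(p)\cap M_\alpha \neq \emptyset$. Indeed, if $q\in L_i(p)\cap M_\alpha$, then since both $q$ and $\ell_i$ belong to $M_\alpha$ and $L_i(p)$ is definable from these parameters, elementarity gives $L_i(p)\in M_\alpha$. If this happened for two distinct indices $i\neq j$, then $L_i(p)$ and $L_j(p)$ would be two distinct lines in $M_\alpha$ (they have different directions since $\ell_i\neq \ell_j$) meeting at exactly the point $p$, and elementarity would force this intersection $p$ into $M_\alpha$, contradicting the choice of $p$. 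In particular, each $p$ in the layer $(M_{\alpha+1}\setminus M_\alpha)\cap \mathbb{R}^2$ carries at most one forbidden colour inherited from $M_\alpha$.

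With the claim in hand the recursion goes through easily: when it is time to colour $p_n$, at most one colour is blocked by prior elements of $M_\alpha$ and at most $n$ further colours are blocked by the already-coloured $p_0,\dots,p_{n-1}$, so only finitely many colours in total are forbidden and any other $i\in\omega$ will do. The main obstacle is conceptual rather than technical: a head-on induction along a bare enumeration of $\mathbb{R}^2$ in order type $\omega_1$ would allow a single new point to inherit countably many forbidden colours and thereby exhaust $\omega$; the whole point of the elementary-submodel filtration is to cap this ``external'' contribution at exactly one forbidden index per point, which is precisely what the claim delivers.
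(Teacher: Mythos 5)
Your proof is correct and follows essentially the same route as the paper's: the key claim that two distinct indices $i\neq j$ with $L_i(p)\cap M_\alpha\neq\emptyset$ and $L_j(p)\cap M_\alpha\neq\emptyset$ would force $p$ into $M_\alpha$ is exactly the paper's ``constructibility'' observation, recast via the continuous chain of countable elementary submodels that the paper itself introduces at the end of Section~\ref{sec:case}. The only immaterial difference is the within-layer bookkeeping: the paper reserves the dedicated pair of colours $\{2n,2n+1\}$ for the $n$-th new point of each layer, whereas you count that at most $n+1$ colours are forbidden for $p_n$ and choose greedily; both work.
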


\begin{proof}
 Fix distinct lines $\ell_i$ for $i<\oo$ through the origin. Our goal is to find sets $A_i$ so that $\mbb R^2=\bigcup \{A_i:i<\oo\}$ and if $\ell\perp \ell_i$ (that is, $\ell$ and $\ell_i$ are perpendicular) then $|A_i\cap \ell|\leq 1$.
 
 If our only goal would be to cover a countable subset $R_0=\{r_0,r_1 \dots\}$ of $\mbb R^2$ first then we can simply let $A_i=\{r_i\}$. What if we wish to extend this particular assignment to a cover of a larger set $R_1\supseteq R_0$? Given some $r\in R_1\setm R_0$, the only obstacle of putting $r$ into $A_i$ is that the line $\ell=\ell(r,i)$ through $r$ perpendicular to $\ell_i$ meets $A_i$ already; we will say that $i$ is bad for $r$ (see Figure \ref{fig:badpoints}).

 \begin{figure}[H]
  \centering
	\psscalebox{0.7 0.7}
{
\begin{pspicture}(0,-3.8764062)(8.388282,3.8564062)
\psline[linewidth=0.04cm](1.42,3.0764062)(4.6,-3.6435938)
\psline[linewidth=0.04cm](0.0,-0.44359374)(7.64,-1.8635937)
\psdots[dotsize=0.16](3.38,-1.0835937)
\psline[linewidth=0.04cm,linestyle=dashed,dash=0.16cm 0.16cm](5.72,3.8364062)(4.9,-1.9835937)
\psline[linewidth=0.04cm,linestyle=dashed,dash=0.16cm 0.16cm](1.6,-0.32359374)(7.52,2.9764063)
\psdots[dotsize=0.16](4.2,1.0764062)
\psdots[dotsize=0.16](5.22,0.15640625)
\psdots[dotsize=0.24](5.46,1.8164062)
\usefont{T1}{ptm}{b}{it}
\rput(7.7915626,-2.4135938){$\ell_i$}
\usefont{T1}{ptm}{m}{n}
\rput(3.5459373,-3.6535938){$\ell_j$}
\usefont{T1}{ptm}{m}{n}
\rput(6.255937,1.7664063){$r$}
\usefont{T1}{ptm}{m}{n}
\rput(4.495938,1.8264062){$r_j$}
\usefont{T1}{ptm}{m}{n}
\rput(5.9359374,0.56640625){$r_i$}
\psbezier[linewidth=0.04](2.82,2.9564064)(2.82,2.1564062)(3.5298266,2.3493474)(4.02,1.4164063)(4.5101733,0.48346514)(4.22,0.03640625)(4.14,-0.54359376)
\psbezier[linewidth=0.04](3.62,0.01640625)(4.02,-0.18359375)(4.372718,0.037566073)(5.12,0.15640625)(5.8672824,0.27524644)(6.4,-0.14359374)(7.04,-1.0035938)
\usefont{T1}{ptm}{m}{n}
\rput(7.831406,-0.79859376){$A_i$}
\usefont{T1}{ptm}{m}{n}
\rput(3.5614061,3.0814064){$A_j$}
\end{pspicture} 
}  \caption{$i$ and $j$ are both bad for $r$}
  \label{fig:badpoints}
\end{figure}

 If all $i<\oo$ are bad for $r$ simultaneously then we are in trouble since there is no way we can extend this cover to include $r$.
 
 However, note that if $i\neq j$ are both bad for $r$ then $r$ can be constructed from the lines $\ell_i,\ell_j$ and the points $r_i,r_j$ (see Figure \ref{fig:badpoints}). So constructible points pose the only obstacle for defining further extensions. Hence, in the first step, we should choose $R_0$ so that it is closed under such constructions. It is easy to see that any set $R$ is contained in a set $R^*$ of size $|R|+\oo$ which is closed in the following sense: if $x,y\in R$ and $i\neq j<\oo$ then the intersection of $\ell(x,i)$ and $\ell(y,j)$ is also in $R$.
 
 Now the complete proof in detail: using CH, list $\mbb R^2$ as $\{r_\alpha:\alpha<\omg\}$ and construct an increasing sequence $R_0\subset R_1\subset \dots \subseteq R_\alpha$ for $\alpha<\omg$ so that $R_0=(\{r_0\})^*$, $R_{\alpha+1}=(R_\alpha\cup \{r_\alpha\})^*$ and $R_{\beta}=\bigcup\{R_\alpha:\alpha<\beta\}$ for $\beta$ limit. Note that each $R_\alpha$ is countable and closed in the above sense. Furthermore, $\bigcup\{R_\alpha:\alpha<\omg\}=\mbb R^2$.
 
 Next, we define the sets $\{A_i:i<\oo\}$ by first distributing $R_0$ trivially as before and then inductively distributing the points in the differences $R_{\alpha+1}\setm R_\alpha$. $R_{\alpha+1}\setm R_\alpha$ is countable so we can list it as $\{t_n:n<\oo\}$. Given that the points of $R_\alpha$ are assigned to the $A_i$'s already, we will put $t_n$ into $A_{2n}$ or $A_{2n+1}$. Why is this possible? Because if both $2n$ and $2n+1$ are bad for $t_n$ then $t_n$ is constructible from points in $R_\alpha$ and hence $t_n\in R_\alpha$ which is a contradiction. This finishes the induction and hence the proof of the theorem.
 \end{proof}

 One can think of the sequence $\{R_\alpha:\alpha<\omg\}$ as a \emph{scheduling of objectives}, where in our current situation an objective is a point in $\mbb R^2$ that needs to be covered. We showed that we can easily cover new points given that the sets $R_\alpha$ were closed under constructibility.
 
 Is there any way of knowing, in advance, under what operations exactly our sets need to be closed? It actually does not matter. Indeed, one can define the closure operation using any countably many functions (and not just this single geometric constructibility) and the closure $R^*$ of $R$ still has the same size as $R$ (given that $R$ is infinite). So we can start by saying that $\{R_\alpha:\alpha<\omg\}$ is a filtration of $\mbb R^2$ closed under \emph{any conceivable operation} and, when time comes in the proof, we will use the fact that each $R_\alpha$ is closed under the particular operations that we are interested in. 
 
 Let us make the above argument more precise, and introduce elementary submodels, because the phrase \emph{conceivable operation} is far from satisfactory. 

 Let $V$ denote the set theoretic universe and let $\theta$ be a cardinal. Let $H(\theta)$ denote the collection of sets of hereditary cardinality $<\theta$. $H(\theta)$ is actually a set which highly resembles $V$. Indeed, all of ZFC except the power set axiom is satisfied by the model $(H(\theta),\in)$; moreover, if $\theta$ is large enough then we can take powers of small sets while remaining in $H(\theta)$. So, choosing $\theta$ large enough ensures that any argument, with limited iterations of the power set operation, can be carried out in $H(\theta)$ instead of the proper model of ZFC.
 
 \begin{dfn}\label{eldef} We say that a subset $M$ of $H(\theta)$ is an elementary submodel iff for any first order formula $\phi$ with parameters from $M$ is true in $(M,\in)$ iff it is true in $(H(\theta),\in)$. We write $M\prec H(\theta)$ in this case. 
 \end{dfn}

 The reason to work with the models $(H(\theta),\in)$ is that we cannot express ``$(M,\in)\prec (V,\in)$'' in first-order set theory (see \cite[Theorem 12.7]{jech}), but resorting to $M\prec H(\theta)$ avoids the use of second-order set theory. Furthermore,  for any countable set $A\subseteq H(\theta)$, there is a countable elementary submodel $M$ of $H(\theta)$ with $A\subseteq M$ by the downward L\"owenheim-Skolem theorem \cite[Theorem I.15.10]{kunen}. 

Now, a lot of objects are automatically included in any nonempty $M\prec H(\theta)$: all the natural numbers, $\oo$, $\omg$ or $\mbb R$. Also, note that $M\cap \mbb R^2$ is closed under any operation which is defined by a first order formula with parameters from $M$.

We need to mention that countable elementary submodels are far from transitive i.e. $A\in M$ does not imply $A\subseteq M$ in general. However, the following is true:
 
 \begin{fact}\label{elfact} Suppose that $M$ is a countable elementary submodel of $H(\theta)$ and $A\in M$. If $A$ is countable then $A\subseteq M$ or equivalently, if $A\setm M$ is nonempty then $A$ is uncountable.
\end{fact}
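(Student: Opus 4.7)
The plan is to exploit elementarity to transfer a countability witness inside $M$. First, I would verify that $\oo\subs M$: each natural number $n<\oo$ is definable by a parameter-free first-order formula (build it up from $\empt$ using the successor operation), so by elementarity every $n\in M$. In particular $\oo\subs M$.

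Next, I would express ``$A$ is countable'' as a first-order statement with parameter $A$. Assuming $A$ is nonempty (the empty case is trivial), countability is witnessed by the formula $\phi(A)\equiv \exists f\,(f$ is a surjection from $\oo$ onto $A)$. Since $A\in M$ and $H(\theta)\models \phi(A)$, elementarity ($M\prec H(\theta)$) gives some $f\in M$ with $H(\theta)\models$ ``$f:\oo\onto A$''.

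Finally, I would fix an arbitrary $a\in A$ and show $a\in M$. Pick any $n<\oo$ with $f(n)=a$; since $n\in \oo\subs M$ and $f\in M$, the formula $\psi(y)\equiv (y=f(n))$ has parameters in $M$, and $H(\theta)\models \exists!\,y\,\psi(y)$, so by elementarity the unique such $y$ lies in $M$. Hence $a=f(n)\in M$, showing $A\subs M$. The equivalent formulation is the contrapositive.

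I do not expect any serious obstacle here; the one subtlety worth flagging is the order of the two elementarity applications (first to extract $f\in M$, then to extract each value $f(n)\in M$), together with the preliminary observation that $\oo\subs M$ so that the indices $n$ we evaluate at genuinely belong to $M$.
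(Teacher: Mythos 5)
Your proof is correct and follows essentially the same route as the paper: obtain a surjection $f:\oo\to A$ inside $M$ by elementarity, note $\oo\subs M$, and conclude $A=\ran(f)\subs M$ pointwise. The extra care you take (handling $A=\empt$ separately and spelling out why each value $f(n)$ lands in $M$) only makes explicit what the paper leaves implicit.
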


We present the proof to familiarize the reader with Definition \ref{eldef}.

\begin{proof} Suppose that $A\in M$ and $A$ is countable. So $H(\theta)\models |A|\leq \omega$. This means that $H(\theta)\models \phi(A)$ where $\phi(x)$ is the sentence ``there is a surjective $f:\oo\to x$''. In turn, $M\models \phi(A)$ so we can find $f\in M$  such that $\dom(f)=\oo$ and $\ran(f)=A$. We conclude that $f(n)\in M$ since $n\in M$ for all natural numbers $n\in \oo$. Hence $A=\ran(f)\subseteq M$.
\end{proof}

Now, how do elementary submodels connect to the closed sets in the proof of Theorem \ref{sierp}? Set $\theta=\mathfrak c^+$ and take a countable  $M\prec H(\mathfrak c^+)$ which contains our fixed set of lines $\{\ell_i:i<\oo\}$. We let $R=M\cap \mbb R^2$ and claim that $R$ is closed under constructibility. Indeed, if $i<\oo$ and $x\in R\setm \ell_i$ then the fact that ``$\ell$ is a line through $x$ orthogonal to $\ell_i$'' is expressible by a first order formula with parameters in $M$. Hence $M$ contains as an element a (more precisely, the unique) line $\ell=\ell(x,i)$ through $x$ that is orthogonal to $\ell_i$. So the intersection $\ell(x,i)\cap \ell(y,j)$ is an element of $M$ if $x,y\in R$. Since  $\ell(x,i)\cap \ell(y,j)$ is a singleton, Fact \ref{elfact} implies that $\ell(x,i)\cap \ell(y,j)\subseteq R$.

In order to build the filtration $\{R_\alpha:\alpha<\omg\}$, we take a continuous, increasing sequence of countable elementary submodels $M_\alpha\prec H(\mathfrak c^+)$ and set $R_\alpha=M_\alpha\cap \mbb R^2$. If we make sure that $r_\alpha\in M_{\alpha+1}$ then $\{R_\alpha:\alpha<\omg\}$ covers $\mbb R^2$.\\


\section{Chains versus trees of elementary submodels}\label{sec:dtree}


In the proof of Theorem \ref{sierp}, CH was imperative and Sierpinski already asked if CH was in fact necessary to show Theorem \ref{sierp}. 
The answer came from R. O. Davies \cite{davies}  who proved that neither CH nor any other extra assumption is necessary beyond ZFC.

We will present his proof now as a way of introducing the main notion of our paper: Davies-trees. Davies-trees will be special sequences of countable elementary submodels $\langle M_\alpha:\alpha<\kappa\rangle$ covering a structure of size $\kappa$. Recall that if $\kappa>\omg$ (e.g. we wish to cover $\mbb R^2$ while CH fails) then we cannot expect the $M_\alpha$'s to be increasing so what special property can help us out?   



The simple idea is that we can always cover a structure of size $\kappa$ with a continuous chain of elementary submodels of size $<\kappa$ so lets see what happens if we repeat this process and cover each elementary submodel again with chains of smaller submodels, and those submodels with chains of even smaller submodels and so on $\dots$ The following result is a simple version of \cite[Lemma 3.17]{milovich}:

\begin{theorem}\label{main} Suppose that $\kappa$ is  cardinal, $x$ is a set. Then there is a large enough cardinal $\theta$ and a sequence of $\langle M_\alpha:\alpha<\kappa\rangle$ of  elementary submodels of $H(\theta)$ so that
\begin{enumerate}[(I)]
	\item $|M_\alpha|=\oo$ and $x\in M_\alpha$ for all $\alpha<\kappa$,
	\item $\kappa\subs \bigcup_{\alpha<\kappa} M_\alpha$, and
	\item\label{ww} for every $\beta<\kappa$ there is $m_\beta\in \mbb N$ and models $N_{\beta,j}\prec H(\theta)$ such that $ x\in N_{\beta,j}$ for $j<m_\beta$ and $$\bigcup\{M_\alpha:\alpha<\beta\}=\bigcup\{N_{\beta,j}:j<m_\beta\}.$$
\end{enumerate}
\end{theorem}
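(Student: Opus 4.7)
The plan is induction on $\kappa$, reducing the problem of covering $\kappa$ by countable elementary submodels to the same problem for smaller cardinals, using a continuous outer chain as a scaffold. The base cases $\kappa\le\aleph_1$ are settled by a single continuous increasing $\in$-chain: pick $\theta$ large and build $\langle M_\alpha:\alpha<\kappa\rangle$ continuous increasing with each $M_\alpha$ countable, $x\in M_0$, and $\alpha\in M_{\alpha+1}$. By the Tarski--Vaught test an increasing union of elementary submodels is itself an elementary submodel, so $\bigcup_{\alpha<\beta}M_\alpha$ is always a single submodel and (III) holds with $m_\beta\le 1$.

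For the induction step, assume the theorem for all cardinals smaller than $\kappa$ and fix $\theta$ large enough to dominate every recursive call. Cover $\kappa$ by an \emph{outer} continuous increasing chain $\langle K_\gamma:\gamma<\cf(\kappa)\rangle$ of elementary submodels of $H(\theta)$, each of size ${<}\kappa$ and containing $x$, with $\kappa\subseteq\bigcup_\gamma K_\gamma$; this is standard L\"owenheim--Skolem. Set $\lambda_\gamma=|K_\gamma|+\aleph_0<\kappa$, fix a bijection $e_\gamma:\lambda_\gamma\to K_\gamma$, and apply the inductive hypothesis to $\lambda_\gamma$ with the tuple $(x,K_\gamma,e_\gamma)$ in place of $x$. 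This yields a sequence $\langle M^\gamma_\alpha:\alpha<\lambda_\gamma\rangle$ of countable elementary submodels of $H(\theta)$ that contains the tuple and satisfies (I)--(III).

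The desired sequence is the concatenation of the blocks $\langle M^\gamma_\alpha:\alpha<\lambda_\gamma\rangle$ ordered by $\gamma$; its total length is $\sum_\gamma\lambda_\gamma=\kappa$ in both the regular and singular cases, and (I) is immediate. Since $e_\gamma\in M^\gamma_\alpha$ and $\lambda_\gamma\subseteq\bigcup_\alpha M^\gamma_\alpha$ by the inductive (II), elementarity pushes $K_\gamma=e_\gamma[\lambda_\gamma]$ into $\bigcup_\alpha M^\gamma_\alpha$, so $\kappa\subseteq\bigcup_\gamma K_\gamma$ is covered and (II) holds. For (III), if $\beta$ lies in block $\gamma$ at local index $\alpha$, then
\[
\bigcup_{\beta'<\beta}M_{\beta'}\;=\;\Bigl(\bigcup_{\gamma'<\gamma}K_{\gamma'}\Bigr)\cup\Bigl(\bigcup_{\alpha'<\alpha}M^\gamma_{\alpha'}\Bigr).
\]
By continuity of the outer chain the first piece is a single elementary submodel ($K_\gamma$ at limit $\gamma$, $K_{\gamma-1}$ at successor), while by the inductive (III) for $\lambda_\gamma$ the second piece is a finite union of elementary submodels containing $x$; hence $m_\beta$ is at most one more than the inductive witness and stays finite.

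The main obstacle is organizational rather than conceptual. The continuity of the outer chain is essential, since without it the cumulative union at the start of each block could swell to an unbounded union of submodels; and the enumerations $e_\gamma$ are needed because the inductive statement only promises covering of the ordinal $\lambda_\gamma$, whereas the outer scaffold demands covering a set of that size. One also has to verify that a single $\theta$ may be chosen once at the outset to serve every recursive call, which is possible because the recursion strictly decreases on cardinals and only produces elementary submodels.
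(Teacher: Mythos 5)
There is a genuine gap in your verification of (III). The displayed identity
\[
\bigcup_{\beta'<\beta}M_{\beta'}\;=\;\Bigl(\bigcup_{\gamma'<\gamma}K_{\gamma'}\Bigr)\cup\Bigl(\bigcup_{\alpha'<\alpha}M^{\gamma}_{\alpha'}\Bigr)
\]
is false. The contribution of the earlier blocks is $\bigcup_{\gamma'<\gamma}\bigcup_{\alpha'<\lambda_{\gamma'}}M^{\gamma'}_{\alpha'}$, and your recursive call for $\lambda_{\gamma'}$ only gives the inclusion $K_{\gamma'}\subseteq\bigcup_{\alpha'}M^{\gamma'}_{\alpha'}$, not the reverse: the countable models $M^{\gamma'}_{\alpha'}$ are elementary submodels of $H(\theta)$ built with no reference to $K_{\gamma'}$ beyond containing it as an element, so each of them contains plenty of points of $H(\theta)$ that lie outside $K_{\gamma'}$. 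Hence, for $\beta$ at the start of block $\gamma$ with $\gamma$ infinite, $\bigcup_{\beta'<\beta}M_{\beta'}$ is a union of infinitely many blocks with no inclusion relations between them, and there is no reason for it to be a finite union of elementary submodels; (III) fails exactly where it matters. (A secondary issue: the inductive (III) only controls \emph{proper} initial segments of a block, so even a single completed earlier block is not yet known to be a finite union of submodels.)

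The repair is to strengthen the inductive statement so that each block is anchored \emph{inside} its scaffold model: given $K\prec H(\theta)$ of size $\lambda$ with $x\in K$, produce countable $M_\alpha\prec H(\theta)$ containing $x$ with $\bigcup_{\alpha<\lambda}M_\alpha=K$ \emph{exactly}, and with every initial segment, including the full union, a finite union of submodels containing $x$. Then the earlier blocks really do union up to $\bigcup_{\gamma'<\gamma}K_{\gamma'}$, which by continuity of the outer chain is a single submodel, and your bookkeeping goes through with $m_\beta$ increasing by at most one per level of recursion. This is precisely what the paper's proof arranges, in non-inductive form: it builds a tree in which each uncountable node $M(a)$ is written as a continuous increasing union of strictly smaller submodels $M(a^\frown\xi)$ containing $x$, iterates until the leaves are countable, and orders the leaves lexicographically, so that the leaves below a node union to exactly that node and an initial segment of leaves is the union of finitely many pieces of the form $\bigcup\{M((b\uhp (j-1))^\frown\xi):\xi<b(j-1)\}$, each a single submodel as an increasing union. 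Your outer chain plus recursion is essentially the first two levels of that tree; the missing ingredient is keeping the recursion inside the models themselves rather than merely matching cardinalities via the bijections $e_\gamma$.
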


We will refer to such a sequence of models as a \emph{Davies-tree for $\kappa$ over $x$} in the future (and we will see shortly why they are called trees). The cardinal $\kappa$ will denote the size of the structures that we deal with (e.g. the size of $\mbb R^2$) while the set $x$ contains the objects relevant to the particular situation (e.g. a set of lines).

Note that if the sequence $\langle M_\alpha:\alpha<\kappa\rangle$ is increasing then $\bigcup\{M_\alpha:\alpha<\beta\}$ is also an elementary submodel of $H(\theta)$ for each $\beta<\kappa$; as we said already, there is no way to cover a set of size bigger than $\omg$ with an increasing chain of \emph{countable} sets. Theorem \ref{main} says that we can cover by countable elementary submodels and almost maintain the property that the initial segments $\bigcup\{M_\alpha:\alpha<\beta\}$ are submodels. Indeed, each initial segment is the union of \emph{finitely many} submodels by condition (\ref{ww}) while these models still contain everything relevant (denoted by $x$ above) as well.

\begin{proof}[Proof of Theorem \ref{main}] Let $\theta$ be large enough so that $\kappa,x\in H(\theta)$.
 We recursively construct a tree $T$ of finite sequences of ordinals and elementary submodels $M(a)$ for $a\in T$. Let $\emptyset\in T$ and let $M(\emptyset)$ be an elementary submodel of size $\kappa$ so that  
\begin{itemize}
	\item $x\in M(\emptyset)$,
	\item ${\kappa \subs M(\emptyset)}$.
\end{itemize}
Suppose that we defined a tree $T'$ and corresponding models $M(a)$ for $a\in T'$. Fix $a\in T'$ and suppose that $M(a)$ is \emph{uncountable}. Find a continuous, increasing sequence of elementary submodels $\<M(a^\frown\xi)\>_{\xi<\zeta}$ so that
\begin{itemize}
	\item $x\in  M(a^\frown\xi)$ for all $\xi<\zeta$,
	\item $M(a^\frown\xi)$ has size strictly less than $M(a)$, and 
	\item $M(a)=\bigcup\{M(a^\frown\xi):\xi<\zeta\}$.
\end{itemize}
 We extend $T'$ with $\{a^\frown \xi:\xi<\zeta\}$ and iterate this procedure to get $T$.


\vspace{0.4 cm}
\hspace{1 cm}
\begin{tikzpicture}[]

\draw (0,0)   node  (me)  {$M(\emptyset)$};

\draw (-3,1)   node   (m0) {$M(0)$};
\draw (-1.5,1)   node (m1)   {$M(1)$};
\draw (0,1)   node    {$\dots$};
\draw (2,1)   node (ma)   {$M(\alpha)$};
\draw (4,1)   node    {$\dots$};
\draw (6,1)   node (mb)   {$M(\beta)$};
\draw (8,1)   node    {$\dots$};

\draw  (me) -- (m0);
\draw  (me) -- (m1);
\draw  (me) -- (ma);
\draw  (me) -- (mb);

\draw (0,2)   node   (ma0) {$M({\alpha}^\frown 0)$};
\draw (2,2)   node (ma1)   {$M({\alpha}^\frown 1)$};
\draw (3,2)   node    {$\dots$};
\draw (4.5,2)   node (maa)   {$M({\alpha}^\frown\gamma)$};
\draw (5.5,2)   node    {$\dots$};

\draw  (ma) -- (ma0);
\draw  (ma) -- (ma1);
\draw  (ma) -- (maa);

\end{tikzpicture}


\vspace{0.2 cm}

It is easy to see that this process produces a downwards closed subtree $T$ of $\textrm{Ord}^{<\oo}$ and if $a\in T$ is a terminal node  then $M(a)$ is countable. Let us well order $\{M(a): a\in T$ is a terminal node$\}$ by the lexicographical ordering ${\lex}$. 

First, note that the order type of  ${\lex}$ is $\kappa$ since $\{M(a): a\in T$ is a terminal node$\}$ has size $\kappa$ and each $M(a)$ has $<\kappa$ many ${\lex}$-predecessors.

We wish to show that if $b\in T$ is terminal then $\bigcup\{M(a):a{\lex}b, a\in T$ is a terminal node$\}$ is the union of finitely many submodels containing $x$. Suppose that $|b|=m\in \mbb N$ and write $$N_{b,j}=\bigcup\{M((b\uhp {j-1})^\frown \xi):\xi<b(j-1)\}$$ for $j=1\dots m$. It is clear that $N_{b,j}$ is an elementary submodel as a union of an increasing chain. Also, if $a{\lex}b$ then $M(a)\subs N_{b,j}$ must hold where $j=\min\{i\leq n: a(i)\neq b(i)\}$. So $$\bigcup\{M(a):a\lex b\textmd{ is terminal}\}=\bigcup\{N_{b,j}:j<m\}$$ as desired.

\end{proof}

\textbf{Remarks.} Note that this proof shows that if $\kappa= \aleph_n$ then every initial segment in the lexicographical ordering is the union of $n$ elementary submodels (the tree $T$ has height $n$). 

In the future, when working with a sequence of elementary submodels $\langle M_\alpha:\alpha<\kappa\rangle$, we use the notation $$ M_{<\beta}=\bigcup\{M_\alpha:\alpha<\beta\}$$ for $\beta<\kappa$.
\vspace{0.2 cm}

Let us outline now Davies' result which removes CH from Theorem \ref{sierp}:

\begin{theorem}$\mbb R^2$ can be covered by countably many rotated graphs of functions. 
\end{theorem}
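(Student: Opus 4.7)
The plan is to adapt Sierpinski's inductive argument from Theorem~\ref{sierp}, replacing the CH-based increasing chain with a Davies-tree filtration, thereby removing CH. Fix a bijection $f:\mf c\to \mbb R^2$ and apply Theorem~\ref{main} to obtain a Davies-tree $\langle M_\alpha:\alpha<\mf c\rangle$ over $x=\{f,\{\ell_i:i<\oo\}\}$. Set $R_\alpha=M_\alpha\cap \mbb R^2$; each $R_\alpha$ is countable, and since $f\in M_\alpha$ and $\mf c\subseteq\bigcup_{\alpha<\mf c}M_\alpha$, we get $\bigcup_{\alpha<\mf c}R_\alpha=\mbb R^2$. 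Enumerate the $M_\alpha$ according to the lexicographic order of terminal nodes of the associated tree, so that Theorem~\ref{main}(\ref{ww}) applies.

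I will then inductively distribute points across the $A_i$, step by step. At step $\alpha$, the points of $M_{<\alpha}\cap \mbb R^2$ are already assigned; list $R_\alpha\setm M_{<\alpha}$ as $\{t_n:n<\oo\}$ and, processing $n=0,1,\dots$ in order, place $t_n$ into some $A_i$ where $i$ is not \emph{bad} for $t_n$ in the sense of the proof of Theorem~\ref{sierp}, namely, the line through $t_n$ perpendicular to $\ell_i$ meets $A_i$ in some previously assigned point $r_i$. To finish the step it suffices to show that only finitely many $i<\oo$ are bad for $t_n$.

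The main obstacle, and the key use of the Davies-tree structure, is bounding the number of bad indices arising from conflict points in $M_{<\alpha}$. Write $M_{<\alpha}=\bigcup_{j<m_\alpha}N_{\alpha,j}$ as in Theorem~\ref{main}(\ref{ww}). I claim that for each fixed $j<m_\alpha$, at most one $i<\oo$ is bad for $t_n$ via a conflict point $r_i\in A_i\cap N_{\alpha,j}$. Indeed, if $r_i,r_{i'}\in N_{\alpha,j}$ were two such conflict points for distinct $i\neq i'$, then $t_n$ is the unique intersection of the perpendicular to $\ell_i$ through $r_i$ with the perpendicular to $\ell_{i'}$ through $r_{i'}$. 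Since $\{\ell_k:k<\oo\}\in N_{\alpha,j}$ and $r_i,r_{i'}\in N_{\alpha,j}$, both perpendiculars, and their singleton intersection, are definable from parameters in $N_{\alpha,j}$; so by elementarity together with Fact~\ref{elfact} the point $t_n$ itself lies in $N_{\alpha,j}\subseteq M_{<\alpha}$, contradicting $t_n\notin M_{<\alpha}$. This gives at most $m_\alpha$ bad indices from $M_{<\alpha}$; together with at most $n$ indices made bad by the already-assigned $t_0,\dots,t_{n-1}$, the total is finite and infinitely many good choices remain for $t_n$.

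The finiteness of $m_\alpha$ in Theorem~\ref{main}(\ref{ww}) is precisely what replaces the closure-under-constructibility trick in Sierpinski's original argument: where previously a single countable set absorbed all geometric constructions, here finitely many submodels do, and the same two-point collision still forces a contradiction inside some single $N_{\alpha,j}$.
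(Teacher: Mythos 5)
Your proposal is correct and follows essentially the same route as the paper: a Davies-tree for $\mf c$ over the lines and an enumeration of $\mbb R^2$, followed by the pigeon-hole observation that two conflict points in a single $N_{\alpha,j}$ would make $t_n$ definable from parameters in $N_{\alpha,j}$ and hence (via Fact~\ref{elfact}) an element of $M_{<\alpha}$, a contradiction. The only cosmetic difference is the bookkeeping of bad indices (you count at most $m_\alpha$ from $M_{<\alpha}$ plus at most $n$ from the current step, while the paper first excludes the indices already used at this step), which is equivalent.
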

\begin{proof} Fix distinct lines $\ell_i$ for $i<\oo$ through the origin. As before, our goal is to find sets $A_i$ so that $\mbb R^2=\bigcup \{A_i:i<\oo\}$ and if $\ell\perp \ell_i$ then $|A_i\cap \ell|\leq 1$.

Let $\kappa=\mf c$ and take a Davies-tree $\langle M_\alpha:\alpha<\kappa\rangle$ for $\kappa$ over $\{\kappa, \mbb R^2, r,\ell_i:i<\oo\}$ where $r:\kappa\to \mbb R^2$ is onto. So, if $\xi\in \kappa\cap M_\alpha$ then $r(\xi)\in \mbb R^2\cap M_\alpha$. In turn, $\mbb R^2\subseteq \bigcup \{M_\alpha:\alpha<\kappa\}$.

By induction on $\beta<\kappa$, we will distribute the points in $\mbb R^2\cap M_{<\beta}$ among the sets $A_i$ while making sure that if $\ell\perp \ell_i$ then $|A_i\cap \ell|\leq 1$. In a general step, we list the countable set $\mbb R^2\cap M_\beta\setm M_{<\beta}$ as $\{t_n:n<\oo\}$. Suppose we were able to put $t_k$ into $A_{i_k}$ for $k<n$ and we consider $t_n$. 

Recall that $M_{<\beta}$ can be written as $\bigcup\{N_{\beta,j}:j<m_\beta\}$ for some finite $m_\beta$ where each $N_{\beta,j}$ is an elementary submodel containing $\{\kappa, \mbb R^2, r,\ell_i:i<\oo\}$. In turn, $\mbb R^2\cap M_{<\beta}$ is the union of $m_\beta$ many sets which are closed under constructibility using the lines $\{\ell_i:i<\oo\}$. This means that there could be at most $m_\beta$ many $i\in \oo\setm \{i_k:k<n\}$ which is bad for $t_n$ i.e. such $i$ so that the line $\ell(t_n,i)$ through $t_n$ which is perpendicular to $\ell_i$ already meets $A_i$. Indeed, otherwise we can find a single $j<m_\beta$ and $i\neq i'\in \oo\setm \{i_k:k<n\}$ so that the line $\ell(t_n,i)$ meets $A_i\cap N_{\beta,j}$ already and $\ell(t_n,i')$ meets $A_{i'}\cap N_{\beta,j}$ already. However, this means that $t_n$ is constructible from $\mbb R^2\cap N_{\beta,j}$ so $t_n\in N_{\beta,j}$ as well. This contradicts $t_n\in M_\beta\setm M_{<\beta}$.
 
 So select any $i_n\in \oo\setm \{i_k:k<n\}$ which is not bad for $t_n$ and put $t_n$ into $A_{i_n}$. This finishes the induction and hence the proof of the theorem.
 
\end{proof}

We will proceed now with various new applications of Davies-trees; our aim is to start with simple proofs and proceed to more involved arguments. However, the next four sections of our paper can be read independently.\\

Finally, let us mention explicit applications of Davies-trees from the literature that we are aware of (besides Davies' proof above). 

Arguably, the most important application is S. Jackson and R. D. Mauldin's solution to the \emph{Steinhaus tiling problem} (see \cite{mauldin} or the survey \cite{jackson}). In the late 1950s, H. Steinhaus asked if there is a subset $S$ of $\mbb R^2$ such that every rotation of $S$ tiles the plane or equivalently, $S$ intersects every isometric copy of the lattice $\mbb Z\times \mbb Z$ in exactly one point. Jackson and Mauldin provides an affirmative answer; their ingenious proof elegantly combines deep combinatorial, geometrical and set-theoretical methods (that is, a transfinite induction using Davies-trees). Again, their argument becomes somewhat simpler assuming CH. However, this assumption can be eliminated, as in the Sierpinski-Davies situation, if one uses Davies-trees as a substitute for increasing chains of models. Unfortunately, setting up the application of Davies-trees in this result is not in the scope of our paper so we chose more straightforward applications.

Last but not least, D. Milovich \cite{milovich} polished  Davies, Jackson and Mauldin's technique further to prove a very general result in \cite[Lemma 3.17]{milovich} with applications in set-theoretic topology in mind. In particular, one can guarantee that the Davies-tree $\langle M_\alpha:\alpha<\kappa\rangle$ has the additional property that $\langle N_{\alpha,j}:j<m_\alpha\rangle,\langle M_\beta:\beta<\alpha\rangle\in M_{\alpha}$ for all $\alpha<\kappa$. This extra hypothesis is rather useful in some situations and will come up later in Section \ref{sec:high} as well. Milovich goes on to apply his technique in further papers concerning various order properties \cite{milovichFM1,milovichFM2} and constructions of Boolean algebras \cite{milovichFM3}.


\section{Degrees of disjointness}\label{sec:disjointness}

Let us warm up by proving a simple fact from the theory of almost disjoint set systems. A family of sets $\mc A$ is said to be almost disjoint if $A\cap B$ is finite for all $A\neq B\in \mc A$. There are two well known measures for disjointness:

\begin{dfn} We say that a family of sets $\mc A$ is \emph{$d$-almost disjoint} iff $|A\cap B|<d$ for every $A\neq B\in \mc A$. 

$\mc A$ is \emph{essentially disjoint} iff we can select finite $F_A\subs A$ for each $A\in \mc A$ so that $\{A\setm F_A:A\in \mc A\}$ is pairwise disjoint.
\end{dfn}

There are almost disjoint families which are not essentially disjoint; indeed, any uncountable, almost disjoint family $\mc A$ of infinite subsets of $\oo$ witnesses this. However:

\begin{theorem}[\cite{kopefam}] Every $d$-almost disjoint family $\mc A$ of countable sets is essentially disjoint for every $d\in \mbb N$.
\end{theorem}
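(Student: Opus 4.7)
The plan is to proceed by Davies-tree induction. Let $\kappa=|\mc A|$, fix a bijection $e:\kappa\to \mc A$, and take a Davies-tree $\langle M_\alpha:\alpha<\kappa\rangle$ for $\kappa$ over $x=\{\mc A,d,e\}$. Because $\kappa\subseteq \bigcup_\alpha M_\alpha$ and $e\in M_\alpha$, every $A\in \mc A$ lies in some $M_\alpha$; for each $A$ write $\beta(A)$ for the least such $\alpha$, and note that each level $\mc A_\beta:=\mc A\cap M_\beta\setm M_{<\beta}$ is countable. I will define $F_A$ so that it absorbs every intersection of $A$ with sets processed at earlier levels or earlier within its own level.

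The heart of the proof, and what I expect to be the only real obstacle, is the following elementarity claim: if $N\prec H(\theta)$ contains $\mc A$ and $d$, and $A\in\mc A\setm N$, then $|A\cap \bigcup(\mc A\cap N)|<d$. To verify this, suppose otherwise and pick distinct $y_0,\ldots,y_{d-1}\in A\cap\bigcup(\mc A\cap N)$. Each $y_i$ lies in some $B_i\in\mc A\cap N$; since $B_i$ is a countable set in $N$, Fact \ref{elfact} yields $B_i\subseteq N$, so $\{y_0,\ldots,y_{d-1}\}\subseteq N$. The statement ``there exists $A'\in\mc A$ with $\{y_0,\ldots,y_{d-1}\}\subseteq A'$'' is witnessed in $H(\theta)$ by $A$, hence by elementarity also in $N$, producing some $A'\in\mc A\cap N$ with $|A'\cap A|\geq d$ and $A'\neq A$ (as $A\notin N$), contradicting $d$-almost disjointness.

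With the claim in hand, the construction is essentially forced. Fix any enumeration $\mc A_\beta=\{A^\beta_n:n<\oo\}$ for each $\beta<\kappa$ and set
\[
F_{A^\beta_n}\;:=\;A^\beta_n\cap\Bigl[\,\bigcup(\mc A\cap M_{<\beta})\;\cup\;\bigcup_{k<n}A^\beta_k\,\Bigr].
\]
Writing $M_{<\beta}=\bigcup_{j<m_\beta}N_{\beta,j}$ as in clause (III) of Theorem \ref{main}, each $N_{\beta,j}$ contains $x$ and satisfies $A^\beta_n\notin N_{\beta,j}$ (since $A^\beta_n\notin M_{<\beta}\supseteq N_{\beta,j}$), so the claim applied to each $N_{\beta,j}$ gives $|A^\beta_n\cap\bigcup(\mc A\cap M_{<\beta})|<m_\beta\cdot d$; combined with the trivial bound $|A^\beta_n\cap A^\beta_k|<d$ for $k<n$, this makes $F_{A^\beta_n}$ finite. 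For pairwise disjointness, take distinct $A,B\in\mc A$ and order them so that $A$ is processed before $B$ in this scheme; then $A\cap B\subseteq F_B$, so $(A\setm F_A)\cap(B\setm F_B)=\emptyset$, as required.
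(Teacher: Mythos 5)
Your proof is correct and follows essentially the same route as the paper's: a Davies-tree over $\mc A$ together with a surjection, the key elementarity observation that a set of $\mc A$ outside a submodel meets $\bigcup(\mc A\cap N)$ in fewer than $d$ points, and then a greedy removal of finite sets level by level. The only (cosmetic) difference is that the paper phrases the elementarity step as ``$N\models$ there is a \emph{unique} $A\in\mc A$ containing $a$'' to conclude $A\in N$, whereas you produce a second set $A'\in N$ contradicting $d$-almost disjointness; both yield the same conclusion.
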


The original and still relatively simple argument uses an induction on $|\mc A|$. This induction is eliminated by the use of Davies-trees.

\begin{proof} Let $\kappa=|\mc A|$ and take a Davies-tree $\langle M_\alpha:\alpha<\kappa\rangle$ for $\kappa$ over $\{\mc A,f\}$ where $f:\kappa\to \mc A$ is onto. Note that  $\mc A\subs \bigcup_{\alpha<\kappa} M_\alpha$. Also, recall that $M_{<\alpha}=\bigcup\{N_{\alpha,j}:j<m_{\alpha}\}$ for some $m_\alpha<\oo$ for each $\alpha<\kappa$. 
 
 Our goal is to define a map $F$ on $\mc A$ such that $F(A)\in [A]^{<\omega}$ for each $A\in \mc A$ and $\{A\setm F(A):A\in \mc A\}$ is pairwise disjoint. 

Let $\mc A_\alpha=(\mc A \cap M_\alpha)\setm  M_{<\alpha}$ and $\mc A_{<\alpha}=\mc A \cap M_{<\alpha}$. We define $F$ on each $\mc A_\alpha$ independently so fix $\alpha<\kappa$. 

\begin{obs} $|A\cap (\bigcup \mc A_{<\alpha})|<\omega$ for all $A\in \mc A_\alpha$.
\end{obs}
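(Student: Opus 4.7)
The plan is to exploit the finite decomposition $M_{<\alpha}=\bigcup_{j<m_\alpha}N_{\alpha,j}$ that comes with the Davies-tree. Since $\mc A_{<\alpha}=\bigcup_{j<m_\alpha}(\mc A\cap N_{\alpha,j})$, we have
$$A\cap \bigcup \mc A_{<\alpha}=\bigcup_{j<m_\alpha}\bigl(A\cap \bigcup(\mc A\cap N_{\alpha,j})\bigr),$$
and as $m_\alpha<\omega$ it is enough to prove that each piece $A\cap \bigcup(\mc A\cap N_{\alpha,j})$ is finite. So I fix $j<m_\alpha$ and write $N=N_{\alpha,j}$.

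First I would note that every $B\in \mc A\cap N$ is a countable element of $N$, so $B\subseteq N$ by Fact \ref{elfact}; in particular $\bigcup(\mc A\cap N)\subseteq N$, and the task reduces to bounding $|A\cap N|$. I claim that in fact $|A\cap N|<d$, which is much stronger than needed.

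To prove the claim, suppose for a contradiction that $|A\cap N|\geq d$ and pick $X\subseteq A\cap N$ with $|X|=d$. Since $X$ is finite and every element of $X$ lies in $N$, the set $X$ itself is an element of $N$. Now $\mc A$ is $d$-almost disjoint, so any two distinct members of $\mc A$ meet in fewer than $d$ points; hence $A$ is the \emph{unique} member of $\mc A$ containing $X$ as a subset. The sentence ``there exists a unique $B\in \mc A$ with $X\subseteq B$'' is first-order with parameters $X,\mc A\in N$, so by elementarity the witness must actually lie in $N$. Thus $A\in N\subseteq M_{<\alpha}$, contradicting $A\in \mc A_\alpha$.

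The main thing one has to spot is that $d$-almost disjointness provides a first-order description of $A$ from any $d$-element subset of itself; this is what lets elementarity of $N$ force $A$ into $N$. Once that is noticed, the rest is a routine application of Fact \ref{elfact} together with the finite cover property guaranteed by clause (III) of Theorem \ref{main}, and the argument goes through uniformly in $\alpha$, eliminating any separate induction on $|\mc A|$.
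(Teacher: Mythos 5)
Your proof is correct and follows essentially the same route as the paper: both reduce to the finitely many models $N_{\alpha,j}$, use Fact \ref{elfact} to get $\bigcup(\mc A\cap N_{\alpha,j})\subseteq N_{\alpha,j}$, and then use the fact that $d$-almost disjointness makes $A$ the unique member of $\mc A$ containing any $d$-element subset of itself, so elementarity forces $A\in N_{\alpha,j}$, a contradiction. The only (harmless) cosmetic difference is that you prove the sharper bound $|A\cap N_{\alpha,j}|<d$ directly, whereas the paper argues by contradiction from the assumption that the intersection is infinite.
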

\begin{proof} Otherwise, there is some $j<m_\alpha$ so that $A\cap \bigcup (\mc A\cap N_{\alpha,j})$ is infinite; in particular, we can select $a\in [A\cap \bigcup (\mc A\cap N_{\alpha,j})]^d$. Note that $\bigcup (\mc A\cap N_{\alpha,j})\subs N_{\alpha,j}$ since each set in $\mc A$ is countable. Hence $a\subs N_{\alpha,j}$ and so $a\in N_{\alpha,j}$ as well. However, $N_{\alpha,j}\models$ ``there is a \emph{unique} element of $\mc A$ containing $a$"  since $\mc A$ is $d$-almost disjoint. So $A\in N_{\alpha,j}\subs \bigcup \mc M_{<\alpha}$ by elementarity which contradicts $A\in \mc A_\alpha$.
\end{proof}

Now list $\mc A_\alpha$ as $\{A_{\alpha,l}:l\in \omega\}$. Let $$F(A_{\alpha,l})=A_{\alpha,l}\cap \bigl(\bigcup \mc A_{<\alpha}\cup \bigcup \{A_{\alpha,k}:k<l\}\bigr)$$ for $l<\omega$. Clearly, $F$ witnesses that $\mc A$ is essentially disjoint.

\end{proof}

In a recent paper, Kojman \cite{kojman} presents a general framework for finding useful filtrations of $\rho$-uniform $\nu$-almost disjoint families where $\rho\geq \beth_\oo(\nu)$. That method is based on Shelah's Revised GCH and an analysis of density functions.
%

\section{Conflict-free colorings}\label{sec:cffree}

The study of colorings of set systems $\mc A$ dates back to the early days of set theory and combinatorics. Let us say that a map $f:\bigcup \mc A\to \lambda$ is a \emph{chromatic coloring} of $\mc A$ iff $f\uhr A$ is not constant for any $A\in \mc A$. \emph{The chromatic number of $\mc A$} is the least $\lambda$ so that there is a chromatic coloring of $\mc A$.  The systematic study of chromatic number problems in infinite setting was initiated by P. Erd\H os and A. Hajnal \cite{EH0}. this section we will only work with families of infinite sets; note that any essentially disjoint family (and so every $d$-almost disjoint as well) has chromatic number 2.

Now, a more restrictive notion of coloring is being \emph{conflict-free}: we say that $f:\bigcup \mc A\to \lambda$ is conflict-free iff $|f^{-1}(\nu)\cap A|=1$ for some $\nu<\lambda$ for any $A\in \mc A$. That is, any $A\in \mc A$ has a color which appears at a unique point of $A$. Clearly, a conflict-free coloring is chromatic. We let $\cfc {(\mc A)}$ denote the least $\lambda$ so that $\mc A$ has a conflict-free coloring. After conflict-free colorings of finite and geometric set systems were studied extensively (see \cite{chei, even, pach, pachh}),  recently A. Hajnal, I. Juh\'asz, L. Soukup and Z. Szentmikl\'ossy \cite{HJSSz} studied systematically the conflict-free chromatic number of infinite set systems. 

We prove the following:

\begin{theorem}\cite[Theorem 5.1]{HJSSz}\label{cfthm} If  $m,d$ are natural numbers and $\mc A\subseteq [\oo_m]^\oo$ is $d$-almost disjoint then
 \begin{eqnarray}
 \cfc {(\mc A)}\le
 {\left\lfloor {\frac{(m+1)(d-1)+1}2}   \right\rfloor+2}.
 \end{eqnarray}
\end{theorem}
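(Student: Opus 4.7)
The plan is to run a Davies-tree induction analogous to the one in Section \ref{sec:disjointness}, but now using a tree of height exactly $m$ together with a more careful colour-assignment scheme. First, $|\mc A|\le\aleph_m$: each $A\in\mc A$ has $\aleph_0$ many $d$-subsets, each $d$-subset of $\oo_m$ lies in at most one member of $\mc A$ by $d$-almost disjointness, so $|\mc A|\cdot\aleph_0\le|[\oo_m]^d|=\aleph_m$. Fix a surjection $r\colon\aleph_m\onto\mc A$ and a Davies-tree $\<M_\alpha:\alpha<\aleph_m\>$ over $\{\mc A,r,d\}$. By the remark following Theorem \ref{main}, every initial segment $M_{<\alpha}$ decomposes as $\bigcup_{j<m_\alpha}N_{\alpha,j}$ with $m_\alpha\le m$. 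Put $\mc A_\alpha=(\mc A\cap M_\alpha)\setm M_{<\alpha}$ and $\mc A_{<\alpha}=\mc A\cap M_{<\alpha}$.

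Two structural observations will drive the induction. First, the same argument as in Section \ref{sec:disjointness} yields, for each $A\in\mc A_\alpha$ and each $j<m_\alpha$, the bound $|A\cap\bigcup(\mc A\cap N_{\alpha,j})|\le d-1$: otherwise $d$ points of the intersection would each lie in some countable $B\in\mc A\cap N_{\alpha,j}$ and hence in $N_{\alpha,j}$ by Fact \ref{elfact}, so the resulting $d$-subset would uniquely determine $A$ by $d$-almost disjointness and force $A\in N_{\alpha,j}\subs M_{<\alpha}$, a contradiction. Consequently $|A\cap\bigcup\mc A_{<\alpha}|\le m(d-1)$. Second, any new point $p\in\oo_m\cap M_\alpha\setm M_{<\alpha}$ lies in no member of $\mc A_{<\alpha}$ at all, since any $A\in M_{<\alpha}$ is countable and thus $A\subs M_{<\alpha}$ by Fact \ref{elfact}.

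I would then build the conflict-free coloring $g\colon\oo_m\to\lambda$, with $\lambda=\lfloor((m+1)(d-1)+1)/2\rfloor+2$, by recursion on $\alpha<\aleph_m$. At stage $\alpha$ enumerate $\mc A_\alpha=\{A_{\alpha,l}:l<\oo\}$ and $\oo_m\cap M_\alpha\setm M_{<\alpha}=\{p_{\alpha,n}:n<\oo\}$, and interleave two tasks: selecting, for each $A_{\alpha,l}$, a witness colour $c_{\alpha,l}<\lambda$ together with a designated point $q_{\alpha,l}\in A_{\alpha,l}$ at which $g(q_{\alpha,l})=c_{\alpha,l}$ and which will be the unique occurrence of $c_{\alpha,l}$ on $A_{\alpha,l}$; and colouring each $p_{\alpha,n}$ compatibly with the constraints accumulated so far. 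The second observation above ensures that constraints imposed by members of $\mc A_{<\alpha}$ never affect the colouring of fresh $p_{\alpha,n}$, so all live constraints are local to stage $\alpha$.

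The main obstacle is the bookkeeping showing that $\lambda$ colours actually suffice. A critical saving comes from noticing that only colours appearing \emph{twice or more} on $A_{\alpha,l}\cap M_{<\alpha}$ are disqualified as the witness $c_{\alpha,l}$ — any colour that is present exactly once can simply adopt its sole point as $q_{\alpha,l}$ — so the number of forbidden witness colours coming from $M_{<\alpha}$ is at most $\lfloor m(d-1)/2\rfloor$ rather than the naive $m(d-1)$. Combining this "halving" with the analogous in-stage analysis that tracks how the witnesses $c_{\alpha,k}$ and designated points $q_{\alpha,k}$ for $k<l$ interact with both $A_{\alpha,l}$ and with each fresh $p_{\alpha,n}$ is what accounts for the precise constant $\lfloor((m+1)(d-1)+1)/2\rfloor+2$. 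I expect this finite combinatorial optimisation — managing the joint scheduling of witnesses, designated points, and point-colours within a single countable slice — to be the delicate part of the proof. The set-theoretic content, by contrast, reduces entirely to the two structural observations above and is a direct analogue of Section \ref{sec:disjointness}.
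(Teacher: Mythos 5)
Your strategy is exactly the paper's: a Davies-tree for $\oo_m$ whose initial segments are unions of at most $m$ submodels, a bound of $m(d-1)$ on the already-coloured part of each new $A$, and the observation that only colours occurring \emph{at least twice} there are disqualified as witnesses, which is where the $\lfloor\cdot/2\rfloor$ comes from. One correction to the set-up: the quantity you must bound is $|A\cap M_{<\alpha}|$, not $|A\cap\bigcup\mc A_{<\alpha}|$ --- every point of $\oo_m\cap M_{<\alpha}$ is already coloured by stage $\alpha$ whether or not it lies in an earlier member of $\mc A$, and each such point of $A$ feeds the bad-colour count. Fortunately the stronger bound $|A\cap N_{\alpha,j}|\le d-1$ follows by an even simpler form of your argument (any $d$-element subset of the submodel $N_{\alpha,j}$ is an \emph{element} of $N_{\alpha,j}$ and uniquely determines $A$), so this is a misstatement rather than an obstruction.

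The genuine gap sits exactly where you defer, and it is not mere bookkeeping: for the halving to give a \emph{finite} bound you must ensure that each $A_{\alpha,l}$ contains only boundedly many of the designated points chosen earlier in the same stage; a priori it could contain $l$ of them, and $l$ is unbounded as the stage proceeds, so ``the analogous in-stage analysis'' does not close as stated. The paper's device is to pick the $n$-th designated point $x_n\in A_n$ avoiding $M_{<\alpha}$, all $A_i$ with $i<n$, \emph{and every $A_k$ that already contains $d$ of the points $x_0,\dots,x_{n-1}$}. This last exclusion is a second use of $d$-almost disjointness (only finitely many $A_k$ are excluded and each meets $A_n$ in fewer than $d$ points, so the choice is possible), and it caps the number of designated points on any $A_n$ at $d+1$. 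That gives at most $m(d-1)+d=(m+1)(d-1)+1$ previously coloured points on $A_n$, hence at most $K-2$ bad colours among the $K-1$ witness-eligible ones, where $K$ is the claimed bound. You also need to specify the colour of the infinitely many non-designated fresh points of each $A_{\alpha,l}$: they must all receive the single reserved colour $K-1$, which is never used as a witness; this is the role of the extra colour in the ``$+2$'' and is what prevents a chosen witness colour from being duplicated later in the same stage (later stages cannot interfere, since $A_{\alpha,l}\subseteq M_\alpha$ by Fact \ref{elfact}). With these two points supplied, your argument coincides with the paper's.
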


Part of the reason to include this proof here is to demonstrate how Davies-trees for $\omega_m$ can provide better understanding of such surprising looking results.

\begin{proof}
 Let $\mc A\subs  [{\omega}_m]^{\omega}$ be $d$-almost disjoint and let $\<M_{\alpha}:{\alpha}<{\omega}_m\>$ be a Davies-tree
for ${\omega}_m$ over $\mc A$. So each initial segment $M_{<\alpha}$ is the finite union $\bigcup_{j<m}N_j$ of elementary submodels. Also, let $K={\left\lfloor {\frac{(m+1)(d-1)+1}2}   \right\rfloor+2}.$

We will define $e_{\alpha}: {\omega}_m\cap (M_{\alpha}\setm M_{<{\alpha}})\to K$ so that 
$e=\bigcup_{{\alpha}<{\omega}_m}e_{\alpha}$ is a conflict-free coloring of  $\mc A$.

 We start by a simple observation:

\begin{obs} If $A\in \mc A\cap (M_{\alpha}\setm M_{<{\alpha}})$ then 
$|A\cap M_{<{\alpha}}|\le m(d-1)$.
\end{obs}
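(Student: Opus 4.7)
The plan is to argue by contradiction along the same lines as the observation in Section \ref{sec:disjointness}, but exploiting the finer combinatorial bound $m(d-1)$ rather than merely ``finite.'' Suppose $A\in \mc A\cap (M_\alpha\setm M_{<\alpha})$ but $|A\cap M_{<\alpha}|\geq m(d-1)+1$. Recall from the remark following Theorem \ref{main} that because $\kappa=\omega_m$, the tree $T$ has height $m$, so the covering $M_{<\alpha}=\bigcup\{N_{\alpha,j}:j<m_\alpha\}$ can be arranged with $m_\alpha\leq m$.

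By pigeonhole, some $j<m_\alpha$ satisfies $|A\cap N_{\alpha,j}|\geq d$, so we can pick a set $a\in [A\cap N_{\alpha,j}]^d$. The set $a$ is finite and its elements all belong to $N_{\alpha,j}$, so $a\in N_{\alpha,j}$ (finite subsets of any elementary submodel are elements of it, since $N_{\alpha,j}$ is closed under pairing and union). Now $\mc A\in N_{\alpha,j}$ and $\mc A$ is $d$-almost disjoint, so the statement ``there is a unique $B\in \mc A$ with $a\subseteq B$'' holds in $H(\theta)$ and hence, by elementarity with parameters $a,\mc A\in N_{\alpha,j}$, also in $N_{\alpha,j}$. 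Thus that unique $B$ lies in $N_{\alpha,j}$. But $A$ itself contains $a$, so $B=A$, giving $A\in N_{\alpha,j}\subseteq M_{<\alpha}$, contradicting the choice of $A$.

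The only step requiring any care is observing $m_\alpha\leq m$; everything else is the standard ``finite subset of a model lies in the model, and definable-by-parameters objects reflect'' pattern we already saw in Section \ref{sec:disjointness}. I do not expect a genuine obstacle here: the bound $m(d-1)$ is exactly what one gets from $m$ copies of ``$d-1$ is the maximum allowed intersection before uniqueness kicks in,'' which is the quantitative sharpening of the previous observation that $d$-almost disjoint countable families are essentially disjoint.
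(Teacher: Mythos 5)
Your proposal is correct and is essentially the paper's own argument, just phrased contrapositively: the paper shows directly that $|A\cap N_{\alpha,j}|\le d-1$ for each of the $m_\alpha\le m$ models (via the same ``finite subset of a model is an element, and the unique $\mc A$-member containing a $d$-set reflects'' reasoning) and then sums, whereas you assume the bound fails and extract a bad $j$ by pigeonhole. Both steps you flag — $m_\alpha\le m$ from the remark after Theorem \ref{main}, and the uniqueness/elementarity argument — are exactly what the paper uses.
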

\begin{proof}Indeed, since $\mc A$ is $d$-almost disjoint, any $A\in \mc A$ is uniquely definable from $\mc A$ and any $d$-element subset of $A$. So   $\mc A\in N_j\prec H(\theta)$ and $|A\cap N_j|\ge d$ would imply $A\in N_j$. In turn, $|A\cap N_j|\le d-1$ for all $j<m$.
\end{proof}

Now let $\{A_n:n\in {\omega}\}=\mc A\cap (M_{\alpha}\setm M_{<{\alpha}})$ and pick $$x_n\in A_n\setm \big( \bigcup_{i<n}A_i\cup  \bigcup\{A_k: |A_k\cap \{x_i:i<n\}|\ge d\}\cup M_{<\alpha}\big ).$$   Note that this selection is possible and let $X=\{x_n:n\in {\omega}\}$. Clearly, $1\le |X\cap A_n|\le d+1$ and $x_n\in X\cap A_n\subs \{x_0\dots, x_n\}$.

We say that a color $i<K-1$  is \emph{bad for $A_n$} iff   there is $y\ne z\in A_n\cap (M_{<{\alpha}}\cup
\{x_0,\dots, x_{n-1}\})$ such that  $e(y)=e(z)=i$. In other words, $i$ will not witness that the coloring is conflict free on $A_n$. We say that a color $i<K-1$ is \emph{good for $A_n$} iff  there is a unique $y\in A_n\cap (M_{<{\alpha}}\cup \{x_0,\dots, x_{n-1}\})$ such that  $e(y)=i$. If there happens to be a good color for $A_n$ then we simply let $e(x_n)=\{K-1\}$. This choice ensures that $i$ still witnesses that the coloring $e$ is conflict-free on $A_n$.

Now, suppose that no color $i<K-1$ is good for $A_n$. The observation and the fact that $|X\cap A_n|\le d+1$ implies that there are at most $\left\lfloor \frac{m(d-1)+{d+1}}2\right\rfloor=
\left\lfloor\frac{(m+1)(d-1)+1}2\right\rfloor$ bad colors for $A_n$. Hence, there is at least one $i<K-1$ so that no element $y\in A_n\cap (M_{<{\alpha}}\cup
\{x_0,\dots, x_{n-1}\})$ has color $i.$ We let  $e(x_n)=i$ for such an $i<K-1$; now, $i$ became a good color for $A_n$. This finishes the induction and the proof of the theorem.




\end{proof}
 If GCH holds then the above bound is almost sharp for $d=2$ or for odd values of $d$: $$\cfc {(\mc A)}\ge
 {\left\lfloor {\frac{(m+1)(d-1)+1}2}   \right\rfloor+1}$$ for some $d$-almost disjoint  $\mc A\subseteq [\oo_m]^\oo$, and we recommend the reader to look into \cite{HJSSz} for a great number of open problems. In particular, Theorem \ref{cfthm} gives the bound 4 for $m=d=2$; it is not known if equality could hold, even consistently.


\section{Clouds above the Continuum Hypothesis}\label{sec:clouds}

The next application, similarly to Davies' result, produces a covering of the plane with small sets. However, this argument makes crucial use of the fact that a set of size $\aleph_m$ (for $m\in \mbb N$) can be covered by a Davies-tree such that the initial segments are expressed as the union of $m$ elementary submodels. 

\begin{dfn}We say that $A\subs \mbb R^2$ is a \emph{cloud around a point $a\in \mbb R^2$} iff every line $\ell$ through $a$ intersects $A$ in a finite set.
\end{dfn}

Note that one or two clouds cannot cover the plane; indeed, if $A_i$ is a cloud around $a_i$ for $i<2$ then the line $\ell$ through $a_0$ and $a_1$ intersects $A_0\cup A_1$ in a finite set. How about three or more clouds? The answer comes from a truly surprising result of P. Komj\'ath and J. H. Schmerl:

\begin{theorem}[\cite{clouds} and \cite{schmerl}] The following are equivalent for each $m\in \mbb N$:
\begin{enumerate}
	\item $2^\omega\leq \aleph_m$,
	\item $\mbb R^2$ is covered by at most $m+2$ clouds.
\end{enumerate}
Moreover, $\mbb R^2$ is always covered by countably many clouds.
\end{theorem}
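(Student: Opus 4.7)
Plan: The equivalence breaks into two directions. The forward direction $(1) \Rightarrow (2)$ is the natural target for Davies-trees, while the reverse $(2) \Rightarrow (1)$ is Schmerl's geometric induction on $m$, which I would cite rather than reprove. The final ``countably many clouds'' sentence is a ZFC variant of the forward construction.

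For $(1) \Rightarrow (2)$ assume $\mf c \le \aleph_m$. I would fix $m+2$ points $a_0, \dots, a_{m+1} \in \mbb R^2$ in general position and take a Davies-tree $\langle M_\alpha : \alpha < \aleph_m \rangle$ over $\{a_0, \dots, a_{m+1}\}$, with $\mbb R^2 \subseteq \bigcup_{\alpha < \aleph_m} M_\alpha$ and each initial segment decomposing as $M_{<\alpha} = \bigcup_{j < m_\alpha} N_{\alpha, j}$, a union of at most $m$ elementary submodels each containing all the $a_i$'s. Build $A_0, \dots, A_{m+1}$ inductively on $\alpha$: enumerate the countable new layer $\mbb R^2 \cap (M_\alpha \setminus M_{<\alpha}) = \{p_n : n < \oo\}$ and assign $p_n$ to any $A_i$ such that the line $\ell_i(p_n)$ through $a_i$ and $p_n$ contains no previously assigned point of $A_i$. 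This already forces $|A_i \cap \ell| \le 1$ for every line $\ell$ through $a_i$, since the first point of $A_i$ placed on $\ell$ blocks the index $i$ forever thereafter.

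The heart of the argument is that at every $p_n$ at least one index is good. If all $m+2$ indices were blocked with witnesses $q_i \in A_i \cap \ell_i(p_n) \cap M_{<\alpha}$ (the in-layer case $q_i = p_k$ with $k < n$ being absorbed by a bookkeeping argument that pushes witnesses back into $M_{<\alpha}$), then each $q_i$ sits in some $N_{\alpha, j_i}$. A pigeonhole of $m+2$ values into $m_\alpha \le m$ boxes yields at least two distinct colliding pairs $\{i_1, i_2\} \ne \{i_3, i_4\}$ — the extremal distribution is $(2, 2, 1, \dots, 1)$, which already uses $m$ boxes and produces exactly two collisions. For each such pair both lines $\ell_i(p_n)$ and $\ell_{i'}(p_n)$ lie in the common $N_{\alpha, j}$ (definable from $a_i, q_i, a_{i'}, q_{i'}$), and their intersection is the single point $p_n$, yielding $p_n \in N_{\alpha, j} \subseteq M_{<\alpha}$ and contradicting $p_n \notin M_{<\alpha}$ — unless the two lines coincide, i.e.\ $p_n$ lies on the pair-line $\ell_{i, i'}$ through $a_i$ and $a_{i'}$. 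Provided the $a_i$'s are placed so that every $p \in \mbb R^2$ lies on at most one of the $\binom{m+2}{2}$ pair-lines, only one of the two colliding pairs can fail in this degenerate way, and the other delivers the contradiction.

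The main obstacle I anticipate is precisely this geometric hypothesis: since any two non-parallel lines in $\mbb R^2$ intersect, no choice of the $a_i$'s literally forces every $p$ to lie on at most one pair-line. I would resolve this by handling the offending set $L = \bigcup_{i \ne i'} \ell_{i, i'}$ — a finite union of lines, of size $\mf c$ — in a preliminary pass, distributing its points to clouds whose index avoids the relevant pair, and then running the main Davies-tree induction on $\mbb R^2 \setminus L$ where the degenerate alternative cannot arise. The final sentence, that $\mbb R^2$ is covered by countably many clouds in ZFC, follows from the same construction with $\oo$ base points $a_n$: the pigeonhole now sends $\oo$ bad indices into finitely many $N_{\alpha, j}$, so some box hosts infinitely many of them, and one picks any two $i \ne i'$ from that box with $p \notin \ell_{i, i'}$ to force the contradiction.
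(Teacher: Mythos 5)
Your overall architecture --- Davies-tree for $\mf c\le\aleph_m$, $m+2$ base points in general position, pigeonhole of blocked indices into the $m$ submodels $N_{\alpha,j}$, separate treatment of the finitely many pair-lines, and citing Schmerl for $(2)\Rightarrow(1)$ --- matches the paper, which likewise only proves $(1)\Rightarrow(2)$. But there is a genuine gap at the heart of your induction, and you have flagged it yourself without filling it: the in-layer blocking. Your pigeonhole correctly shows that at most $m$ of the $m+2$ indices can be blocked by witnesses $q_i\in M_{<\alpha}$ (off the pair-lines), leaving at least two indices free of \emph{old} blocks. But both of those surviving indices can then be killed by witnesses $p_k$ ($k<n$) from the \emph{current} layer, and no ``bookkeeping argument that pushes witnesses back into $M_{<\alpha}$'' is available: those witnesses lie in $M_\alpha\setminus M_{<\alpha}$ by definition, so the line through $a_i$ and such a witness need not belong to any $N_{\alpha,j}$, and the elementarity argument gives nothing. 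Unlike the Sierpi\'nski--Davies proof, you cannot dodge this with fresh indices ($2n$, $2n+1$), because here the total number of indices is fixed at $m+2$. Reduced to its combinatorial core, the within-layer assignment is a list-colouring problem with lists of size $2$ whose conflict relations are clique partitions (one clique per line through each $a_i$), i.e.\ a 2-SAT instance, and nothing in your argument rules out an unsatisfiable configuration.

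The reason the paper does not face this problem is that it maintains a weaker --- and sufficient --- invariant. A cloud around $a_k$ may meet each line through $a_k$ in a \emph{finite} set, whereas your greedy assignment insists on at most one point per line. The paper exploits the slack by enumerating the \emph{lines} through the $a_k$'s rather than the points of the plane: for each line $\ell_{\alpha,i}$ newly appearing in layer $\alpha$ it puts into the corresponding cloud the finite set $F(\ell_{\alpha,i})=\bigcup\{\ell\cap\ell_{\alpha,i}:\ell\in\mc L'\cup\{\ell_{\alpha,i'}:i'<i\}\}$, i.e.\ all intersections with the pair-lines and with the earlier lines of the same layer. The cloud property is then automatic (distinct lines through $a_k$ meet only at $a_k$, so a line through $a_k$ meets $A_k$ only in $a_k$ and its own finite $F$-set), and coverage of a point $x$ in layer $\alpha$ follows because at least two of the $m+2$ lines $\ell_k$ through $x$ are new in that layer, so the one enumerated later absorbs $x$ --- the within-layer interaction is resolved by construction rather than fought against. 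If you want to keep your point-by-point formulation, you must either prove that the 2-SAT instance above is always satisfiable (which seems false in general) or switch to the finite-per-line invariant with the line-based bookkeeping; as written, the proposal does not go through. The same repair is needed for your sketch of the countably-many-clouds statement, which inherits the identical gap.
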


We only prove (i) implies (ii) and follow Komj\'ath's original argument for CH. The fact that countably many clouds always cover $\mbb R^2$ can be proved by a simple modification of the proof below.

\begin{proof} Fix $m\in \omega$ and suppose that the continuum is at most $\aleph_m$. In turn, there is a Davies-tree $\langle M_\alpha:\alpha<\aleph_m\rangle$ for $\mf c$ over $\mbb R^2$ so that $M_{<\alpha}=\bigcup\{N_{\alpha,j}:j<m\}$ for every $\alpha<\aleph_m$. 

Fix $m+2$ points $\{a_k:k<m+2\}$ in $\mbb R^2$ in general position (i.e. no three are collinear). Let $\mc L^k$ denote the set of lines through $a_k$ and let $\mc L=\bigcup\{\mc L^k:k<m+2\}$. We will define clouds $A_k$ around $a_k$ by defining a map $F:\mc L \to [\mbb R^2]^{<\oo}$ such that $F(\ell)\in [\ell]^{<\oo}$ and letting $$A_k=\{a_k\}\cup \bigcup\{F(\ell):\ell\in \mc L^k\}$$ for $k<m+2$. We have to make sure that for every $x\in \mbb R^2$ there is $\ell\in \mc L$ so that $x\in F(\ell)$.

Now, let $\mc L_\alpha=(\mc L \cap M_\alpha)\setm  M_{<\alpha}$  for $\alpha<\aleph_m$. We define $F$ on $\mc L_\alpha$ for each $\alpha<\aleph_m$ independently so fix an $\alpha<\aleph_m$. List $\mc L_\alpha\setm \mc L'$ as $\{\ell_{\alpha,i}:i<\oo\}$ where $\mc L'$ is the set of ${m+2}\choose{2}$ lines determined by $\{a_k:k<m+2\}$. We let $$F(\ell_{\alpha,i})=\bigcup\{\ell\cap \ell_{\alpha,i}:\ell\in \mc L'\cup \{\ell_{\alpha,i'}:i'<i\} \}$$ for $i<\oo$.

We claim that this definition works: fix a point $x\in \mbb R^2$ and we will show that there is $\ell\in \mc L$ with $x\in F(\ell)$. Find the unique $\alpha<\aleph_m$ such that $x\in M_\alpha\setm  M_{<\alpha}$. It is easy to see that $\cup \mc L'$ is covered by our clouds hence we suppose $x\notin \bigcup \mc L'$. Let $\ell_k$ denote the line through $x$ and $a_k$. 

\begin{obs} $| M_{<\alpha}\cap \{\ell_k:k<m+2\}|\leq m$.
\end{obs}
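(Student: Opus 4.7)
The plan is a simple pigeonhole argument that leverages the first-order definability of $\ell_k$ from the pair $(x,a_k)$. The one piece of bookkeeping to do up front is to ensure that the fixed finite set $\{a_k:k<m+2\}$ is among the parameters in the ``over'' list when we invoke Theorem \ref{main}; this way each submodel $N_{\alpha,j}$ in the representation $M_{<\alpha}=\bigcup_{j<m}N_{\alpha,j}$ contains every $a_k$ as an element. (Such finite enrichments of the Davies-tree parameters are harmless and are implicit throughout the paper.)

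The core step is then to prove that \emph{at most one} of the lines $\ell_k$ can lie in any given $N_{\alpha,j}$. Suppose toward a contradiction that $\ell_k,\ell_{k'}\in N_{\alpha,j}$ for some $k\neq k'$. Since $\{a_k:k<m+2\}$ is in general position and we are assuming $x\notin \bigcup \mc L'$, the lines $\ell_k$ and $\ell_{k'}$ are distinct and meet in the single point $x$. But ``$\ell_k\cap \ell_{k'}$'' is defined by a first-order formula in the parameters $\ell_k,\ell_{k'}\in N_{\alpha,j}$, so by elementarity $\{x\}=\ell_k\cap \ell_{k'}\in N_{\alpha,j}$. The set $\{x\}$ is finite, hence countable, so Fact \ref{elfact} promotes membership to inclusion: $\{x\}\subseteq N_{\alpha,j}$, i.e. $x\in N_{\alpha,j}\subseteq M_{<\alpha}$, contradicting $x\in M_\alpha\setminus M_{<\alpha}$.

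Having established that each $N_{\alpha,j}$ contains at most one $\ell_k$, summing over the $m$ submodels gives $|M_{<\alpha}\cap \{\ell_k:k<m+2\}|\leq m$, as claimed. I do not anticipate a genuine obstacle here; the argument is really just the standard ``two lines determine a point'' trick, translated across elementarity with the help of Fact \ref{elfact}. The only small point to highlight is why the size of the initial segment's cover by $m$ (rather than $m_\alpha$) submodels is the right bound — this is precisely the remark after Theorem \ref{main} that a Davies-tree for $\aleph_m$ has height $m$, so $m_\alpha\leq m$ for all $\alpha<\aleph_m$.
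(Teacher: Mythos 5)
Your proof is correct and is essentially the paper's argument: pigeonhole two of the lines $\ell_k$ into a single $N_{\alpha,j}$, observe that their intersection $\{x\}$ is then definable in (hence contained in) $N_{\alpha,j}\subseteq M_{<\alpha}$, and contradict the minimality of $\alpha$. The extra bookkeeping about adding $\{a_k:k<m+2\}$ to the parameters is harmless but not needed, since the two lines themselves already lie in $N_{\alpha,j}$ by hypothesis and their intersection is definable from them alone.
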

\begin{proof} Suppose that this is not true. Then (by the pigeon hole principle) there is $j<m$ such that $|N_{\alpha,j}\cap \{\ell_k:k<m+2\}|\geq 2$ and in particular the intersection of any two of these lines, the point $x$, is in $N_{\alpha,j}\subs M_{<\alpha}$. This contradicts the minimality of $\alpha$.
\end{proof}

We achieved that $$|\{\ell_k:k<m+2\}\cap (\mc L_\alpha\setm \mc L')|\geq 2$$ i.e. there is $i'<i<\oo$ such that $\ell_{\alpha,i'},\ell_{\alpha,i}\in \{\ell_k:k<m+2\}$. Hence $x\in F(l_{\alpha,j})$ is covered by one of the clouds.

\end{proof}

\section{The chromatic number and connectivity}\label{sec:chrom}

A graph $G$ is simply a set of vertices $V$ and edges $E\subseteq [V]^2$. Recall that the \emph{chromatic number} $\chi(G)$ of a graph $G$ is the least number $\lambda$ such that the vertices of $G$ can be colored by $\lambda$ colors without monochromatic edges. It is one of the fundamental problems of graph theory how the chromatic number affects the subgraph structure of a graph i.e. is it true that large chromatic number implies that certain graphs (like triangles or 4-cycles) must appear as subgraphs? The first result in this area is most likely J. Mycielski's construction of triangle free graphs of arbitrary large finite chromatic number \cite{myc}. 

It was discovered quite early that a lot can be said about uncountably chromatic graphs; this line of research was initiated by P. Erd\H os and A. Hajnal in \cite{EH0}. One of many problems in that paper asked whether uncountable chromatic number implies the existence of \emph{highly connected} uncountably chromatic subgraphs.

 A graph $G$ is called \emph{$n$-connected} iff the removal of less than $n$ vertices leaves $G$ connected. Our aim is to prove P. Komj\'ath's following result:

\begin{theorem}[\cite{kopeconn}] \label{nconn} Every uncountably chromatic graph $G$ contains $n$-connected uncountably chromatic subgraphs for every $n\in \mbb N$.
\end{theorem}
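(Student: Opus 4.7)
My plan is to argue by induction on $n$. The base case $n=1$ is immediate: the chromatic number of a graph equals the supremum of the chromatic numbers of its connected components, so if $\chi(G)>\aleph_0$ then some component of $G$ must already be uncountably chromatic. The content of the theorem is therefore entirely in the inductive step, and that is where Davies-trees will be used.

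Assume the result holds for $n-1$ and suppose, for contradiction, that $G=(V,E)$ is uncountably chromatic but contains no $n$-connected uncountably chromatic subgraph. Let $\kappa=|V|$ and fix a Davies-tree $\langle M_\alpha:\alpha<\kappa\rangle$ for $\kappa$ over a set including $G$ and a surjection $f\colon\kappa\to V$, so that for each $\beta<\kappa$ the initial segment $M_{<\beta}$ is a finite union $\bigcup_{j<m_\beta} N_{\beta,j}$ of elementary submodels, each containing $G$. The target is to define a coloring $c\colon V\to\omega$ with no monochromatic edge, contradicting $\chi(G)>\aleph_0$. For each new vertex $v\in V\cap(M_\alpha\setminus M_{<\alpha})$ the color $c(v)$ should depend only on a countably coded ``trace-type'' of $v$ over $M_{<\alpha}$.

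The key structural step I would try to establish is the following: under the standing assumption, for every new vertex $v\in V\cap(M_\alpha\setminus M_{<\alpha})$ and every $j<m_\alpha$, the intersection $N_G(v)\cap N_{\alpha,j}$ is either finite or is contained in the closed neighborhood of a fixed finite set $S_{\alpha,j}$ extracted (by elementarity) from an $(n-1)$-connected uncountably chromatic subgraph of $G$ living inside $N_{\alpha,j}$, whose existence is guaranteed by the inductive hypothesis. The reason such a structural claim should hold is a reflection argument: if instead some $v$ had an ``uncontrolled'' pattern of edges into $N_{\alpha,j}$ avoiding every such finite separator, then elementarity would let $N_{\alpha,j}$ see uncountably many vertices with the same pattern, and attaching those vertices to the $(n-1)$-connected uncountably chromatic subgraph inside $N_{\alpha,j}$ would upgrade its connectivity from $n-1$ to $n$ without losing the uncountable chromatic number, producing the forbidden $n$-connected uncountably chromatic subgraph inside $N_{\alpha,j}$ itself.

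Once the trace claim is available, one reads off a color for $v$ from the finite data $\langle S_{\alpha,j}, N_G(v)\cap N_{\alpha,j}\rangle_{j<m_\alpha}$ modulo an equivalence that identifies traces yielding the same edge pattern; since $m_\alpha$ is finite and each trace lies in a countable pool of possibilities (finite subsets of $\omega$-sized sets of colors already used), the total number of colors is countable, contradicting $\chi(G)>\aleph_0$. The main obstacle I expect is this structural claim: making precise the coherence between the $m_\alpha$ independently chosen objects $S_{\alpha,j}\in N_{\alpha,j}$ so that combining traces across different $j$ actually describes the edges from $v$ to all of $M_{<\alpha}$, and setting up the reflection step so that the $n$-connectivity of the resulting subgraph is genuinely forced rather than merely plausible. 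Everything else—the countable bookkeeping of colors and the inductive bootstrap from $n-1$ to $n$—should then fall out cleanly from the Davies-tree formalism developed in Section \ref{sec:dtree}.
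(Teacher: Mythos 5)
There is a genuine gap, and it sits exactly where you predicted: the ``key structural step.'' The heuristic you offer for it --- that reflecting an uncontrolled edge pattern produces uncountably many vertices which, attached to an $(n-1)$-connected uncountably chromatic subgraph, ``upgrade'' its connectivity to $n$ --- rests on a false principle. Adding vertices to an $(n-1)$-connected graph $H$ does not in general raise its connectivity: a separating set $S$ of size $n-1$ for $H$ still disconnects $H\setminus S$, and the new vertices repair this only if they happen to have neighbors in several of the resulting pieces, which nothing in your setup forces. So the inductive bootstrap from $n-1$ to $n$ is not ``genuinely forced'' and I do not see how to force it. A second problem is the coloring: your structural claim permits a new vertex $v$ to have \emph{infinitely} many neighbors in $M_{<\alpha}$ (namely a subset of the closed neighborhood of a finite $S_{\alpha,j}$), and a color read off from a finite trace cannot be guaranteed to avoid the colors of infinitely many backward neighbors. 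Finally, note that in a Davies-tree the models $N_{\alpha,j}$ are countable, so an uncountably chromatic subgraph cannot ``live inside $N_{\alpha,j}$'' as a subset --- only as an element --- which further obscures how $S_{\alpha,j}$ is supposed to control $N_G(v)\cap N_{\alpha,j}$.

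The paper avoids all of this by not inducting on $n$. It takes $\mc A$ to be the family of \emph{maximal} $n$-connected subsets of $V$, assumes each is countably chromatic, and notes that $\mc A$ is $n$-almost disjoint with every outside vertex having fewer than $n$ neighbors in any single $A\in\mc A$ (Observation \ref{inobs}). The connectivity upgrade you were missing is replaced by the amalgamation Claim \ref{amalg1}, which glues $n$ sets that are \emph{already} $n$-connected into a larger $n$-connected set via $n$ new vertices each having $n$ neighbors in each piece; by elementarity and maximality this yields Lemma \ref{mlemma1}: a new vertex has only \emph{finitely} many neighbors in $\bigcup(\mc A\cap N)$ for each model $N$. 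Finiteness of backward degree (not a trace-type argument) is what makes the product coloring of Lemma \ref{ordlemma} work, and the part of $G$ outside $\bigcup\mc A$ is handled separately by the Erd\H os--Hajnal theorem. If you want to salvage your outline, the essential missing ingredients are precisely the passage to maximal $n$-connected sets and an amalgamation lemma among objects that already have the target connectivity.
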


 Fix a graph $\gr$, $n\in\omega$ and consider the set $\mc A$ of all subsets of $V$ inducing maximal $n$-connected subgraphs of $G$. We assume that $G$ has uncountable chromatic number but each $A\in \mc A$ induces a countably chromatic subgraph and reach a contradiction.
 
 First, $\mc A$ essentially covers $G$:
 
 \begin{lemma}
  The graph $G\uhr V\setm \bigcup \mc A$ is countably chromatic.
 \end{lemma}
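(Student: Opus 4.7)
The plan is to proceed by contradiction: suppose $\chi(G \uhr V') > \aleph_0$ where $V' := V \setm \bigcup \mc A$. I will locate an $n$-connected subgraph whose vertex set already lies inside $V'$, extend it by Zorn's lemma to a maximal $n$-connected induced subgraph of $G$, and then derive a contradiction from the fact that the resulting vertex set must simultaneously belong to $\mc A$ and meet $V'$.

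For the first step I would invoke the classical theorem of Erd\H os and Hajnal from \cite{EH0} stating that any uncountably chromatic graph contains every finite bipartite graph as a subgraph; in particular $G \uhr V'$ contains a copy of $K_{n,n}$ whose vertex set $W$ lies in $V'$. A direct verification shows $K_{n,n}$ is $n$-connected in the sense of the paper, because removing any vertex set $S$ with $|S| < n$ leaves at least one vertex in each part of the bipartition, so the remaining complete bipartite graph is connected. Since $G \uhr W$ contains this $K_{n,n}$ as a spanning subgraph, $G \uhr W$ is itself $n$-connected.

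Next I would apply Zorn's lemma to the family $\{B \subseteq V : W \subseteq B \text{ and } G \uhr B \text{ is } n\text{-connected}\}$, ordered by inclusion. The essential verification is that this family is closed under unions of chains: if $B_\infty$ is such a union, $S \subseteq B_\infty$ with $|S|<n$, and $u,v \in B_\infty \setm S$, then one can pick a member $B$ of the chain containing both $u$ and $v$, and $n$-connectedness of $G \uhr B$ produces a $u$-$v$ path inside $G \uhr (B \setm S) \subseteq G \uhr (B_\infty \setm S)$. Hence a maximal member $A$ exists, and by definition of $\mc A$ we have $A \in \mc A$. But then $W \subseteq A \subseteq \bigcup \mc A$ while $W \subseteq V' = V \setm \bigcup \mc A$, the desired contradiction.

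The single nontrivial ingredient is the Erd\H os--Hajnal input; if one prefers to avoid it, one can substitute Mader's theorem (sufficiently large minimum degree forces $n$-connected subgraphs), applied after first passing to a subgraph of $G \uhr V'$ of arbitrarily large finite minimum degree, which exists because uncountably chromatic graphs are not $k$-degenerate for any finite $k$. I expect this is the only non-formal step of the proof; the Zorn extension and the concluding contradiction are entirely routine.
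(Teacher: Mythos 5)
Your proof is correct and takes essentially the same route as the paper's: the paper simply cites Erd\H os--Hajnal for the fact that every uncountably chromatic graph contains an $n$-connected subgraph and leaves the ``extend to a maximal one, contradiction'' step implicit, whereas you supply exactly those details (deriving the $n$-connected subgraph from a $K_{n,n}$ copy and carrying out the Zorn extension). No gaps.
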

\begin{proof}
 It is proved in \cite{EH0} that every graph with uncountable chromatic number contains an $n$-connected subgraph; hence the lemma follows.
\end{proof}

We will follow Komj\'ath's original framework however the use of Davies-trees will make our life significantly easier. We let $N_G(v)=\{w\in V: \{v,w\}\in E\}$ for $v\in V$.

\begin{lemma}\label{ordlemma}
 Suppose that $\gr$ is a graph, the sequence $\<A_\xi:\xi<\mu\>$ covers $V$ with countably chromatic subsets so that \[ |N_G(x)\cap  A_{<\xi}|<\oo \textrm{ for all }\xi<\mu \textrm{ and }x\in A_\xi\setm  A_{<\xi}\] where $A_{<\xi}=\bigcup \{A_\zeta:\zeta<\xi\}$. Then $\chi(G)\leq \oo$.

\end{lemma}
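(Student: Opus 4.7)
The plan is to produce an explicit proper coloring of $G$ with countably many colors by combining local countable colorings of each $A_\xi$ with a second coordinate that handles the backward (finitely many) edges.

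First I would set up notation: for each $x\in V$ let $\xi(x)$ denote the unique ordinal with $x\in A_{\xi(x)}\setminus A_{<\xi(x)}$, and for every $\xi<\mu$ fix a proper coloring $c_\xi:A_\xi\to\oo$, which exists because $G\uhr A_\xi$ is countably chromatic by hypothesis. The idea is then to define a coloring $c:V\to\oo\times\oo$ of the form $c(x)=(c_{\xi(x)}(x),n(x))$, where $n(x)\in\oo$ is a second coordinate to be chosen so as to separate $x$ from its neighbors at strictly smaller levels.

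Second, I would construct the function $n:V\to\oo$ by recursion on $\xi<\mu$. Assume $n(y)$ has already been defined for every $y$ with $\xi(y)<\xi$, and fix $x\in A_\xi\setminus A_{<\xi}$. By hypothesis $N_G(x)\cap A_{<\xi}$ is finite, and hence the set $\{n(y):y\in N_G(x)\cap A_{<\xi}\}$ is a finite subset of $\oo$. So we may pick $n(x)\in\oo$ avoiding this finite set. The choices inside the single level $A_\xi\setminus A_{<\xi}$ are independent of one another, so they can be performed simultaneously, and the recursion continues.

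Finally, I would verify properness of $c$. Let $\{x,y\}\in E$ and assume without loss of generality that $\xi(x)\le\xi(y)$. If $\xi(x)=\xi(y)=\xi$, then both $x,y\in A_\xi$ and the coloring $c_\xi$ separates them, so the first coordinates of $c(x)$ and $c(y)$ differ. If $\xi(x)<\xi(y)$, then $x\in N_G(y)\cap A_{<\xi(y)}$, and our construction guarantees $n(y)\ne n(x)$, so the second coordinates differ. Either way $c(x)\ne c(y)$, so $c$ is a proper coloring of $G$ with the countable color set $\oo\times\oo$, giving $\chi(G)\le\oo$.

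I do not expect any serious obstacle: the key hypothesis $|N_G(x)\cap A_{<\xi(x)}|<\oo$ is precisely the condition that lets the second coordinate be chosen from $\oo$, and the hypothesis that each $G\uhr A_\xi$ is countably chromatic provides the first coordinate. The only thing to be careful about is that $n$ is defined by recursion on levels $\xi$, not on individual vertices, so that all values $n(y)$ that constrain $n(x)$ are in place before $n(x)$ is chosen; this is automatic since every forbidden $y$ satisfies $\xi(y)<\xi(x)$.
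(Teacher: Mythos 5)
Your proof is correct and is essentially identical to the paper's argument: both use the product color set $\oo\times\oo$, taking the local countable coloring of $A_{\xi(x)}$ as the first coordinate and choosing the second coordinate by recursion on levels to avoid the finitely many values already assigned to neighbors in $A_{<\xi(x)}$. Your write-up just spells out the verification that the paper leaves as "easy to see."
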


\begin{proof} Suppose that $g_\xi:A_\xi\to \oo$ witnesses that the chromatic number of $A_\xi$ is $\leq \oo$. We define $f:V\to \oo\times \oo$ by defining $f\uhp (A_\xi\setm  A_{<\xi} )$ by induction on $\xi<\mu$. If $x\in A_\xi\setm  A_{<\xi}$ then the first coordinate of $f(x)$ is $g_\xi(x)$ while the second coordinate of $f(x)$ avoids all the finitely many second coordinates appearing in $\{f(y):y\in N_G(x)\cap  A_{<\xi}\}$. It is easy to see that $f$ witnesses that $G$ has countable chromatic number.
\end{proof}

Now, our goal is to enumerate $\mc A$ as $\<A_\xi:\xi<\mu\>$ so that the assumptions of Lemma \ref{ordlemma} are satisfied. This will imply that the chromatic number of $G\uhr \bigcup \mc A$ is countable as well which contradicts that $G$ has uncountable chromatic number.

Not so surprisingly, this enumeration will be provided by a Davies-tree but we need a few easy lemmas first.



\begin{obs}\label{inobs}
\begin{enumerate}
\item $A\not\subseteq A'$ for all $A\neq A'\in \mc A$,
\item $|A\cap A'|<n$ for all $A\neq A'\in \mc A$,
\item $|\{A\in \mc A: a\subs A\}|\leq 1$ for all $a\in [V]^{\geq n}$,
\item $|N_G(x)\cap A|<n$ for all $x\in V\setm A$ and $A\in \mc A$.
\end{enumerate}
\end{obs}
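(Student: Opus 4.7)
The plan is to derive each part directly from the maximality clause built into the definition of $\mc A$ (each $A\in \mc A$ induces a \emph{maximal} $n$-connected subgraph of $G$). Parts (1), (2), and (4) all follow a common template: assume the stated bound fails, exhibit a strictly larger $n$-connected superset of some $A\in \mc A$, and read off a contradiction with maximality. Part (3) requires no fresh argument, since any $a\in [V]^{\geq n}$ contained in two distinct members $A,A'\in \mc A$ would force $|A\cap A'|\geq n$, which is excluded by (2).

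For (1), a proper containment $A\subsetneq A'$ with both in $\mc A$ immediately contradicts the maximality of $A$ among $n$-connected subsets, so $A=A'$. For (2), I would prove the contrapositive: if $A\neq A'$ both lie in $\mc A$ with $|A\cap A'|\geq n$, then $A\cup A'$ induces an $n$-connected subgraph, contradicting maximality. The key check is that for any $S\subseteq A\cup A'$ with $|S|<n$, one can pick $y\in (A\cap A')\setminus S$ (possible because $|A\cap A'|\geq n>|S|$); then both $A\setminus S$ and $A'\setminus S$ are connected by $n$-connectedness of $A$ and $A'$, and they share $y$, so $(A\cup A')\setminus S$ is connected. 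An entirely parallel argument yields (4): if $x\in V\setminus A$ satisfies $|N_G(x)\cap A|\geq n$, then $A\cup\{x\}$ is $n$-connected. Indeed, for any $S\subseteq A\cup\{x\}$ with $|S|<n$, if $x\in S$ then $(A\cup\{x\})\setminus S = A\setminus(S\setminus\{x\})$ is connected; otherwise $A\setminus S$ is connected and $x$ is attached to it via any vertex of $(N_G(x)\cap A)\setminus S$, which is nonempty since $|N_G(x)\cap A|\geq n>|S|$.

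There is no real obstacle here --- the observation is essentially bookkeeping, and the only thing to be careful about is writing the connectedness-preservation arguments for $A\cup A'$ and $A\cup\{x\}$ cleanly. It is worth emphasizing that nothing in this observation uses the hypothesis that $\chi(G)>\omega$ or any feature of the forthcoming Davies-tree; these are purely structural properties of maximal $n$-connected induced subgraphs, which is precisely why they can be invoked freely when the enumeration of $\mc A$ for Lemma \ref{ordlemma} is constructed.
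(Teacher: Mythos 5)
Your proof is correct; the paper states this observation without proof, and your argument is exactly the intended bookkeeping. The gluing arguments you give for (2) and (4) --- remove $S$ with $|S|<n$, find a surviving common vertex or surviving neighbour by counting, and conclude connectivity of the union --- are the same amalgamation idea the paper spells out in Claim \ref{amalg1}, and deriving (3) from (2) and (1) from bare maximality is the only reasonable route.
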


The next claim is fairly simple and describes a situation when we can join $n$-connected sets.

\begin{claim}\label{amalg1} Suppose that $A_i\subs V$ spans an $n$-connected subset for each $i<n$ and we can find $Y=\{y_{i,k}:i,k<n\}$ and $X=\{x_k:k<n\}$ distinct points so that $$y_{i,k}\in A_i\cap N_G(x_k)$$ for all $i,k<n$. Then $A=\bigcup \{A_i:i<n\}\cup X$ is $n$-connected.
\end{claim}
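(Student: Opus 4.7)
The plan is to fix an arbitrary $S \subseteq A$ with $|S| < n$ and show $A \setminus S$ is connected. Since each $A_i$ is $n$-connected, each blob $A_i \setminus S$ is connected; also $|A_i| \geq n$ because the $y_{i,0}, \dots, y_{i,n-1}$ are $n$ distinct elements of $A_i$, so $A_i \setminus S$ is nonempty. Thus $A \setminus S$ decomposes into these blobs together with the surviving points $\{x_k : x_k \notin S\}$, and the only guaranteed edges between a surviving $x_k$ and a blob $A_i \setminus S$ come from $y_{i,k}$, which is available precisely when $y_{i,k} \notin S$.

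The heart of the argument will be an auxiliary bipartite graph $H$ on parts $I = \{0, \dots, n-1\}$ and $K = \{k : x_k \notin S\}$, with $\{i,k\}$ an edge iff $y_{i,k} \notin S$. Writing $s_X = |S \cap X|$ and $s_Y = |S \cap Y|$, the disjointness of the $n + n^2$ points of $X \cup Y$ yields $s_X + s_Y \leq |S| < n$, so $|K| = n - s_X$ and $H$ is missing at most $s_Y$ edges from the complete bipartite graph. First I would observe that connectedness of $H$ forces connectedness of $A \setminus S$: any $H$-path $i_0, k_0, i_1, k_1, \dots, i_t$ lifts to a walk $y_{i_0, k_0}, x_{k_0}, y_{i_1, k_0}$, then through the connected blob $A_{i_1} \setminus S$ to $y_{i_1, k_1}$, and so on. Moreover every surviving $x_k$ has degree at least $n - s_Y > 0$ in $H$, so it is captured by this connectivity.

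The task then reduces to checking that $H$ is connected. Every vertex of $H$ has positive degree (a row loses at most $s_Y$ out of $|K|$ possible edges, a column loses at most $s_Y$ out of $n$), so there are no isolated vertices. If $H$ split into two components with row/column sizes $(a, b)$ and $(n - a, |K| - b)$, $1 \leq a \leq n-1$, $1 \leq b \leq |K| - 1$, then the number of missing edges would be at least $a(|K| - b) + (n-a)b$, and an elementary check shows this expression is minimized at $a = b = 1$ with value $n + |K| - 2 = 2n - s_X - 2$. But we have only $s_Y$ missing edges, and $s_Y < n - s_X \leq 2n - s_X - 2$ (using $n \geq 2$), a contradiction. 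The boundary cases $n = 1$ and $|K| \leq 1$ (the latter forcing $s_X = n - 1$ and hence $s_Y = 0$, so the unique column is adjacent to every row) are immediate.

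I do not expect a serious obstacle here; the whole argument is pigeonhole-style counting wrapped around the right bipartite gadget $H$. The only mildly subtle step is recognizing that connectivity of $H$ is the correct intermediate reformulation and then carrying out the edge-count in $H$ crisply enough to beat the bound $|S| < n$.
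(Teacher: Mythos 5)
Your proof is correct, but it takes a genuinely different and substantially heavier route than the paper's. The paper's argument is a single pigeonhole: the $n$ sets $\{x_k\}\cup\{y_{i,k}:i<n\}$ for $k<n$ are pairwise disjoint and each has $n+1$ elements, so since $|F|<n$ at least one index $k$ has its entire column untouched; that $x_k$ then serves as a hub joining all the connected blobs $A_i\setminus F$ at once, and any other surviving $x_j$ attaches because at least one of its $n$ distinct neighbours $y_{0,j},\dots,y_{n-1,j}$ survives. In the language of your gadget $H$, the paper simply exhibits a column of full degree $n$ (a dominating vertex), which trivially makes $H$ connected, whereas you prove connectivity of $H$ in general by bounding the number of missing edges ($\le s_Y$) against the minimum cut-size $a(|K|-b)+(n-a)b\ge n+|K|-2$ of a disconnected bipartite graph without isolated vertices. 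Your counting is right (including the bilinearity/corner argument and the boundary cases $n=1$ and $|K|=1$), and it has the mild virtue of showing that $H$ stays connected under any deletion of fewer than $n$ of the special points however distributed; but for the claim as stated the reformulation through $H$ and the cut-counting are overkill, and the one-line pigeonhole is both shorter and more transparent.
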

\begin{proof}
 Let $F\in [A]^{<n}$ and note that there is a $k<n$ so that $\{y_{i,k},x_k:i<n\}\cap F=\emptyset$. Thus $\bigl (\bigcup \{A_i:i<n\}\cup\{y_{i,k},x_k:i<n\}\bigr )\setm F$ is connected as $A_i\setm F$ is connected for all $i<n$. Finally, if $x_j\in A\setm F$ then $N_G(x_j)\cap \bigcup\{A_i:i<n\}\setm F\neq \emptyset$ so we are done.
\end{proof}

Now, we deduce some useful facts about elementary submodels and maximal $n$-connected sets.

\begin{lemma} Suppose that $N\prec H(\theta)$ with $G\in N$ and $$|N_G(x)\cap N|\geq n$$ for some $x\in V\setm N$. Then $x\in A$ for some $A\in\mc A\cap N$.
\end{lemma}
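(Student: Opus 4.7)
The plan is to locate an $n$-connected subgraph through $x$ that is ``seen'' by $N$, and then use Observation \ref{inobs}(3) to conclude that the corresponding element of $\mc A$ already belongs to $N$. First I would fix $n$ distinct neighbors $y_0,\dots,y_{n-1}\in N_G(x)\cap N$ and consider the set of common neighbors $W=\{v\in V: y_0,\dots,y_{n-1}\in N_G(v)\}$. Since $G,y_0,\dots,y_{n-1}\in N$, the set $W$ is definable with parameters from $N$, and therefore $W\in N$.

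Next, I would use the fact that $x\in W\setm N$ to rule out that $W$ is finite: any finite set belonging to $N$ is a subset of $N$ by elementarity, while $W$ has the outsider $x$. Hence $W$ is infinite, and applying elementarity to ``$W$ is infinite'' gives that $W\cap N$ is infinite as well. In particular one can pick pairwise distinct $x_0',\dots,x_{n-2}'\in W\cap N$ that also avoid $y_0,\dots,y_{n-1}$.

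With these choices the $2n$ vertices $\{x,x_0',\dots,x_{n-2}'\}\cup\{y_0,\dots,y_{n-1}\}$ are all distinct, and by the definition of $W$ each element of the first set is adjacent to each element of the second. So the subgraph induced on these $2n$ vertices contains a copy of the complete bipartite graph $K_{n,n}$. Since $K_{n,n}$ is $n$-connected and $n$-connectivity of a spanning subgraph is inherited by any supergraph on the same vertex set, this induced subgraph of $G$ is $n$-connected, so $x$ lies in some maximal $n$-connected set $A\in\mc A$, and that $A$ necessarily contains $\{y_0,\dots,y_{n-1}\}$. By Observation \ref{inobs}(3), $A$ is the unique element of $\mc A$ containing this $n$-set; this makes $A$ definable from the parameters $y_0,\dots,y_{n-1},\mc A\in N$, and a further application of elementarity yields $A\in N$.

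The main obstacle is producing the amalgamating vertices $x_0',\dots,x_{n-2}'$: the hypothesis only supplies one ``neighbor-rich'' vertex $x$, whereas an $n$-connected configuration requires many, so a naive star on $\{x,y_0,\dots,y_{n-1}\}$ would fail already for $n\geq 2$. The elementarity trick applied to $W$ is what turns the lone $x$ into a full $K_{n,n}$ for free, after which Observations \ref{inobs}(3)--(4) do all the bookkeeping; this is also what makes the hypothesis $|N_G(x)\cap N|\geq n$ exactly the right threshold.
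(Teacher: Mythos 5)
Your proof is correct and follows essentially the same route as the paper: both form the common neighborhood of $n$ neighbors of $x$ lying in $N$, use elementarity to see that this set is large because it contains the outsider $x$, extract an $n$-connected complete bipartite graph through $x$, and then identify the resulting maximal $n$-connected set with one already in $N$ via the uniqueness observations. The only (immaterial) differences are that the paper builds a $K_{n,\omega_1}$ from the uncountability of the common neighborhood and concludes via Observation 7.4(2), whereas you settle for a $K_{n,n}$ and invoke Observation 7.4(3).
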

\begin{proof}
 Let $a\in [N_G(x)\cap N]^{n}$. There is a copy of $K_{n,\omg}$ (complete bipartite graph with classes of size $n$ and $\omg$) which contains $a\cup \{x\}$; to see this, apply Fact \ref{elfact} to $X=\bigcap\{N_G(y):y\in A\}$. As $K_{n,\omg}$ is $n$-connected, there must be $A\in \mc A$ with $a\cup\{x\}\subs A$ as well. Also, there is $A'\in \mc A\cap N$ with $a\subs A'$ by elementarity; as $|A\cap A'|\geq n$ we have $A=A'$ which finishes the proof.
\end{proof}

\begin{lemma}\label{mlemma1} Suppose that $N\prec H(\theta)$ with $G\in N$ and $$|N_G(x)\cap \bigcup(\mc A\cap N)|\geq\omega$$ for some $x\in V\setm N$. Then $x\in A$ for some $A\in\mc A\cap N $.
\end{lemma}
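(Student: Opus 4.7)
The plan is to argue by contradiction: assume $x\notin A$ for every $A\in\mc A\cap N$. Observation~\ref{inobs}(4) then yields $|A\cap N_G(x)|<n$ for each such $A$, so the hypothesis $|N_G(x)\cap\bigcup(\mc A\cap N)|\geq\oo$ forces infinitely many $A\in\mc A\cap N$ to contain a neighbor of $x$. By a greedy selection---at stage $i$ picking $w_i\in N_G(x)\cap\bigcup(\mc A\cap N)\setm\bigcup_{j<i}A_j$ (nonempty, since we remove only finitely many points from an infinite set) and then some $A_i\in\mc A\cap N$ with $w_i\in A_i$---I obtain distinct $A_0,\ldots,A_{n-1}\in\mc A\cap N$ together with distinct neighbors $w_0,\ldots,w_{n-1}$ satisfying $w_i\in A_i\cap N_G(x)$.

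Next I form the definable set
\[Z=\{v\in V\setm\bigcup\nolimits_{i<n}A_i : N_G(v)\cap A_i\neq\emptyset\text{ for each }i<n\}.\]
Since $A_0,\ldots,A_{n-1}\in N$ we have $Z\in N$, and by construction $x\in Z$. As $x\in Z\setm N$ and every finite subset of $N$ lies in $N$, $Z$ must be infinite, and hence $Z\cap N$ is infinite by elementarity. I then pick $x_1,\ldots,x_{n-1}\in Z\cap N$ distinct from one another and from $x_0:=x$.

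The plan is now to invoke Claim~\ref{amalg1} with $A_0,\ldots,A_{n-1}$, $X=\{x_0,\ldots,x_{n-1}\}$, and a matrix $Y=\{y_{i,k}:i,k<n\}$ with $y_{i,k}\in A_i\cap N_G(x_k)$: the claim will produce an $n$-connected set $B:=\bigcup_{i<n}A_i\cup X$, and since $x\in B\setm A_0$ this will contradict the maximality of $A_0\in\mc A$. I take $y_{i,0}:=w_i$, and for $k\geq 1$ I pick $y_{i,k}\in(A_i\cap N_G(x_k))\cap N$, which is possible because $x_k\in N$ makes $A_i\cap N_G(x_k)$ an element of $N$, nonempty by virtue of $x_k\in Z$.

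The main obstacle is realising the full distinctness of all $n^2+n$ vertices in $X\cup Y$ that Claim~\ref{amalg1} requires. The mixed identifications $x_k=y_{i,k'}$ are ruled out automatically, since $y_{i,k'}\in A_i$ whereas $x_k\notin\bigcup_{j<n}A_j$ by definition of $Z$, and the $x_k$'s are distinct by choice. What really needs care is avoiding $y_{i,k}=y_{i',k'}$. I expect to handle this by starting with a much larger pool---many more than $n$ sets $A\in\mc A\cap N$ meeting $N_G(x)$, and many more than $n$ elements of $Z\cap N$---and then extracting a sufficiently ``generic'' sub-configuration, using Observation~\ref{inobs}(2) (which bounds $|A_i\cap A_{i'}|<n$) together with elementarity of $N$ to keep each successive avoidance set nonempty inside $N$.
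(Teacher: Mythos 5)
Your overall strategy (contradict maximality via Claim~\ref{amalg1}) is the right one, and the first half is fine: the greedy choice of distinct $w_i$ and $A_i$, the definability of $Z$, and the inference ``$x\in Z\setm N$, $Z\in N$, hence $Z$ and $Z\cap N$ are infinite'' are all correct. The genuine gap is exactly where you flag it: the distinctness of the $y_{i,k}$. Your repair (a larger pool of $A$'s and of points of $Z\cap N$, then a ``generic'' sub-selection) does not work, because membership in $Z$ only records that $N_G(v)\cap A_i\neq\emptyset$, and nothing prevents this intersection from being the \emph{same singleton} $\{z_i\}$ for every $v$ in your pool (and even for $v=x$). A point $z_i\in A_i$ may have infinitely many neighbours outside $A_i$ each of which meets $A_i$ only in $z_i$; this is perfectly compatible with the maximality of $A_i$ (adding such a neighbour destroys $n$-connectivity for $n\ge 2$, since removing $z_i$ disconnects it). In that situation every admissible choice forces $y_{i,1}=y_{i,2}=\dots=z_i$, no sub-configuration is injective, and indeed the conclusion of Claim~\ref{amalg1} can genuinely fail for your set $\bigcup_{i<n}A_i\cup X$. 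Enlarging the pool of $A$'s does not help either, since the degeneracy can occur for all of them simultaneously.

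The missing ingredient is the preceding lemma, which you never invoke: once the conclusion of Lemma~\ref{mlemma1} is assumed to fail, that lemma gives $|N_G(x)\cap N|<n$. Combined with $|N_G(x)\cap A|<n$ for each $A\in\mc A\cap N$, this lets you choose the $A_i$ so that the witnesses satisfy $w_i\in (A_i\cap N_G(x))\setm N$, i.e.\ the \emph{entire} configuration $\{x,w_0,\dots,w_{n-1}\}$ lies outside $N$. This is what makes reflection work with full distinctness: for every finite $F\in N$ (so $F\subs N$), $H(\theta)$ sees a vertex $v\notin F$ with \emph{distinct} $y_i\in (A_i\cap N_G(v))\setm F$ (witnessed by $x$ and the $w_i$), hence $N$ satisfies the statement ``for all finite $F$ there is such a configuration avoiding $F$,'' hence $H(\theta)$ satisfies it for \emph{all} finite $F$. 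One then builds the $n$ columns greedily, at stage $k$ taking $F$ to be the set of all previously chosen $x_j$ and $y_{i,j}$. Your $Z$-based route reflects only the existence of the centre $v$ and tries to pick the neighbours afterwards inside $N$, which is precisely where the injectivity is lost; any repair along your lines ends up needing $(N_G(x)\cap A_i)\setm N\neq\emptyset$, i.e.\ the step you omitted.
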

\begin{proof}
 Suppose that the conclusion fails; by the previous lemma, we have $|N_G(x)\cap N|< n$. 
In particular, there is sequence of distinct $A_i\in \mc A\cap N$ for $i<n$ so $$(N_G(x)\cap A_i)\setm N\neq \emptyset$$ for all $i<n$ (as $N_G(x)\cap A$ is finite if $A\in N\cap \mc A$). 

Thus \[ N\models \forall F\in[V]^{<\omega}\exists x\in V\setm F\textrm{ and } y_i\in (A_i\cap N_G(x))\setm F.\]

Now, we can find distinct $\{y_{i,k}:i<n, k<n\}$ and $X= \{x_k:k<n\}$ so that $$y_{i,k}\in A_i\cap N_G(x_k).$$ Finally, $\bigcup \{A_i:i<n\}\cup X$ is $n$-connected by Claim \ref{amalg1} which contradicts the maximality of $A_i$. 
\end{proof}

Finally, lets finish the proof of Theorem \ref{nconn} by defining this ordering of $\mc A$. Take a Davies-tree  $\langle M_\alpha:\alpha<\kappa\rangle$ for $|\mc A|$ over $\{G\}$. In turn, $\mc A\subseteq \bigcup \{M_\alpha:\alpha<\kappa\}$. Recall that  for all $\alpha<\kappa$ there is $(N_{\alpha,j})_{j<m_\alpha}$ so that $$M_{<\alpha}=\bigcup\{N_{\alpha,j}:j<m_\alpha\}$$
with $G\in M_\alpha\cap N_{\alpha,j}$.

Let $\mc A_{<\alpha}=\mc A\cap  M_{<\alpha}$ and $\mc A_\alpha=(\mc A\cap M_\alpha)\setm \mc A_{<\alpha}$ for $\alpha<\kappa$. Well order $\mc A$ as $\{A_\xi:\xi<\mu\}$ so that 
\begin{enumerate}
\item $A_\zeta\in \mc A_{<\alpha},A_\xi\in \mc A\setm \mc A_{<\alpha}$ implies $\zeta<\xi$ and
\item $\mc A_\alpha\setm \mc A_{<\alpha}$ has order type $\leq \omega$
\end{enumerate}

for all $\alpha<\kappa$. 

We claim that the above enumeration of $\mc A$ satisfies Lemma \ref{ordlemma}. By the second property of our enumeration and Observation \ref{inobs} (iv), it suffices to show that $$|N_G(x)\cap\bigcup \mc A_{<\alpha}|<\oo $$ if $x\in A\setm \bigcup \mc A_{<\alpha}$ for all $A\in \mc A_\alpha\setm \mc A_{<\alpha}$ and $\alpha<\kappa$.

However, as $\mc A_{<\alpha}=\bigcup\{\mc A\cap N_{\alpha,j}:j<m_\alpha\}$, this should be clear from applying Lemma \ref{mlemma1} for each of the finitely many models $N_{\alpha,j}$ where $j<m_\alpha$. This finishes the proof of Theorem \ref{nconn}.\\

%

We note that Komj\'ath also proves that every uncountably chromatic subgraph contains an $n$-connected uncountably chromatic subgraph with minimal degree $\omega$; we were not able to deduce this stronger result with our tools.

It is an open problem whether every uncountably chromatic graph $G$ contains a nonempty $\omega$-connected subgraph \cite{koperev} (i.e. removing finitely many vertices leaves the graph connected). These infinitely connected subgraphs might only be countable, as demonstrated by

\begin{theorem}[\cite{trees}] There is a graph of chromatic number $\aleph_1$ and size $\mf c$ such that every uncountable set is separated by a finite set. In particular, every $\omega$-connected subgraph is countable.
\end{theorem}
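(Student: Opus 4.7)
The plan is to build $G$ as a disjoint union $G_0 \sqcup I$, where $G_0$ is a ladder-system graph on $\omega_1$ carrying all the combinatorial content, and $I$ is a set of $\mf c$ many isolated vertices used only to inflate the size to $\mf c$. The isolated part is cost-free: it does not affect the chromatic number, and any uncountable $U \subseteq V(G)$ that meets $I$ in an uncountable set is already disconnected in $G\uhr U$ (so $F=\emptyset$ separates it), while an uncountable $U$ with $|U\cap V(G_0)|=\aleph_1$ reduces to the finite-separation property of $G_0$. In particular I only have to worry about a core graph of size $\aleph_1$.

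For the core, fix $S=\{\alpha<\omega_1:\cf(\alpha)=\omega\}$ and, for each $\alpha\in S$, an increasing cofinal map $c_\alpha:\omega\to\alpha$ to be chosen later. Set $V(G_0)=\omega_1$ and $E(G_0)=\{\{\alpha,c_\alpha(n)\}:\alpha\in S,\ n<\omega\}$. The upper bound $\chi(G_0)\le\aleph_1$ is immediate since each $\alpha$ has countable neighbourhood and one can color bottom-up. To force $\chi(G_0)=\aleph_1$ I would diagonalize against countable colorings during the inductive choice of the $c_\alpha$: under CH via a $\diamondsuit$-style enumeration, list candidate $f:\omega_1\to\omega$ and, at the relevant $\alpha\in S$, pick $c_\alpha$ so that some $n$ has $f(c_\alpha(n))=f(\alpha)$. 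Simultaneously, to secure finite separation, the $c_\alpha$ should be ``coherent'' enough --- in the spirit of Todor\v{c}evi\'c's walks on ordinals --- so that for any uncountable $U\subseteq\omega_1$, letting $\alpha^*=\min\{\alpha:|U\cap\alpha|=\aleph_1\}$, the ladders of cofinally many $\beta\in U\setminus\alpha^*$ hit $U\cap\alpha^*$ inside a common finite set $F$. Removing $F$ then cuts $U$ into an upper piece in $[\alpha^*,\omega_1)$ and a lower piece in $\alpha^*$.

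The main obstacle is running the two diagonalizations in tandem. The chromatic-number requirement pushes the $c_\alpha$ to be ``spread out'' and generic, whereas the finite-separation requirement pushes them to be ``concentrated'' and coherent. A Davies-tree style organization of the induction --- enumerating potential colorings and potential uncountable sets along a Davies-tree for $\omega_1$ --- is what I would use to mediate the conflict: each $c_\alpha$ is chosen to kill the local coloring witness while still cohering with the ladders living in the finitely many branches of the tree lying below $\alpha$. Once such a ladder system is in hand, verifying $\chi(G_0)=\aleph_1$ is a Fodor pressing-down argument applied branchwise along the Davies-tree, and verifying finite separation follows by extracting the finite ladder intersection guaranteed by the coherence. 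The delicate simultaneous construction of the $c_\alpha$ is the technical heart of the proof; everything else is bookkeeping.
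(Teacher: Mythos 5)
The paper does not actually prove this theorem; it is quoted from \cite{trees}, so I am comparing your proposal with the construction there. Your plan has a fatal structural flaw: the core object you want, a ladder-system graph on $\omega_1$ with uncountable chromatic number, is not available in ZFC. Whether such a graph exists is independent of ZFC --- consistently (e.g.\ under Martin's Axiom with the negation of CH) \emph{every} ladder-system graph on $\omega_1$ has countable chromatic number, and this independence is one of the main points of the very paper being cited. Your own description betrays the problem: you invoke ``CH via a $\diamondsuit$-style enumeration'' to diagonalize against colorings, but the theorem carries no hypotheses, and even under CH there are $2^{\aleph_1}$ functions $f:\omega_1\to\omega$ to defeat in only $\aleph_1$ stages, so a plain enumeration cannot work and you genuinely need a guessing principle. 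There is also a concrete error in your separation argument: for an uncountable $U\subseteq\omega_1$ every initial segment $U\cap\alpha$ is countable, so $\alpha^*=\min\{\alpha:|U\cap\alpha|=\aleph_1\}$ does not exist. Finally, the tension you notice between making the ladders ``generic'' (for chromatic number) and ``coherent'' (for finite separation) is real and is exactly what cannot be resolved on the linearly ordered vertex set $\omega_1$; a Davies-tree for $\omega_1$ is just an increasing continuous chain of countable models and mediates nothing here.

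The actual construction in \cite{trees} makes the size $\mf c$ essential rather than an afterthought: the graph is a ladder-system graph on a tree $T$ of height $\omega_1$ with no uncountable branches, where each node of limit height is joined to a cofinal set of its predecessors. In ZFC there is such a tree of size $\mf c$ that is \emph{non-special} (not a countable union of antichains) --- while at size $\aleph_1$ this can consistently fail --- and non-specialness is precisely what forces the chromatic number of the ladder graph up to $\aleph_1$. The finite separation of uncountable sets then comes for free from the tree order: edges only run between comparable nodes, so incomparability in $T$, not a delicate coherence of ladders on a single branch, does the separating. So your reduction ``pad a size-$\aleph_1$ core with isolated vertices'' discards exactly the feature ($\mf c$ many vertices arranged in a branching order) that makes the theorem provable without extra axioms.
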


\section{Davies-trees from countably closed models}\label{sec:high}

In a wide range of problems, we are required to work with countably closed elementary submodels i.e. models which satisfy $[M]^\oo\subseteq M$. Recall that it is possible to find countably closed models $M$ with a given parameter $x\in M$ while the size of $M$ is $\mf c$. A prime example of applying such models is Arhangelskii's theorem \cite{arh_lind}: every first countable, compact space has size at most $\mf c$. In the modern proof of this result \cite{gesel}, a continuous  $\omg$-chain of countably closed models, each of size $\mathfrak c$, is utilized. 




Our main goal in this section is to show that, under certain assumptions, one can construct a sequence of countably closed elementary submodels, each of size $\mf c$, which is reminiscent of Davies-trees while the corresponding models cover structures of size $>\mf c^+$; note that this covering would not be possible by an increasing chain of models of size $\mf c$. 

So, what is it exactly that we aim to show?  First, recall that we have been working with the structure $(H(\theta),\in)$ so far. However, we will now switch to  $(H(\theta),\in,\tri)$ where $\tri$ is some (fixed) well-order on $H(\theta)$, and use its elementary submodels.  We shall see in Section \ref{sec:top} that this can be quite useful e.g. the well order $\tri$ can be used to make \emph{uniform choices in a construction} of say topological spaces and hence the exact same construction can be reproduced by any elementary submodel with the appropriate parameters.

Now, we say that a {\em high Davies-tree for ${\kappa}$ over $x$} 
is a sequence  $\<M_{\alpha}:{\alpha}<{\kappa}\>$ 
of elementary submodels of 
$(H(\theta),\in,\tri)$ for some large enough regular $\theta$ such that 
\begin{enumerate}[(I)]
\item 
$\br M_{\alpha};{\omega};\subs M_{\alpha}$, $|M_{\alpha}|=\mf c$ 
and 
$x\in M_{\alpha}$ for all ${\alpha}<{\kappa}$, \smallskip
\item \label{item:cover}   
$\br {\kappa};{\omega};\subs \bigcup_{{\alpha}<{\kappa}}M_{\alpha}$, and\smallskip
\item for each ${\beta}<{\kappa}$ there are $N_{\beta,j}\prec H(\theta)$ with $[N_{\beta,j}]^\oo\subs N_{\beta,j}$ and 
$x\in N_{\beta,j}$ for $j<\oo$ such that $$\bigcup\{M_{\alpha}:\alpha<\beta\}=\bigcup\{N_{\beta,j}:j<\oo\}.$$
\end{enumerate}

Now, a high Davies-tree is really similar to the Davies-trees we used so far, only that we work with countably closed models of size $\mf c$ (instead of countable ones) and the initial segments $M_{<\beta}$ are countable unions of such models (instead of finite unions). Furthermore, we require that the models cover $\br {\kappa};{\omega};$ instead of $\kappa$ itself. This is because our applications typically require to deal with all countable subsets of a large structure.

One can immediately see that (\ref{item:cover}) implies that $\kappa^\oo=\kappa$ and so high Davies-trees might not exist for some $\kappa$. Nonetheless, a very similar tree-argument to the proof of Theorem \ref{main} shows that high Davies-trees do exist for the finite successors of $\mf c$ i.e. for $\kappa<\mf c^{+\oo}$.  We will not repeat that proof here but present a significantly stronger result.

As mentioned already, some extra set theoretic assumptions will be necessary to prove the existence of high Davies-trees for cardinals above $\mf c^{+\oo}$ so let us recall two notions. We say that $\Box^{}_{\mu}$ holds for a singular $\mu$ iff there is a sequence $\< C_\alpha:\alpha<\mu^+\>$ so that $C_\alpha$ is a closed and unbounded subset of $\alpha$ of size $<\mu$ and $C_\alpha=\alpha\cap C_\beta$ whenever $\alpha$ is an accumulation point of $C_\beta$.  $\Box^{}_{\mu}$ is known as \emph{Jensen's square principle}; R. Jensen proved that  $\Box^{}_{\mu}$ holds for all uncountable $\mu$ in the constructible universe $L$. 

Furthermore, a cardinal $\mu$ is said to be \emph{$\oo$-inaccessible} iff $\nu^\oo<\mu$ for all $\nu<\mu$. Now, our main theorem is the following:

\begin{theorem}\label{tm:niceomegadavies} There is a high Davies-tree $\<M_{\alpha}:{\alpha}<{\kappa}\>$ for ${\kappa}$ over $x$ whenever
\begin{enumerate}
 \item\label{neccond1} ${\kappa}={\kappa}^{\omega}$, and
 \item\label{mupluss1} ${\mu}$ is ${\omega}$-inaccessible, ${{\mu}}^{\omega}={\mu}^+$ and
 $\Box^{}_{\mu}$ holds for all  $\mu$ with $\mf c< {\mu}<{\kappa}$ and  $\cf({\mu})={\omega}$.
 \end{enumerate}\smallskip
 Moreover, the high Davies-tree $\<M_{\alpha}:{\alpha}<{\kappa}\>$ can be constructed so that 
 \begin{enumerate}
 \setcounter{enumi}{2}
  \item\label{pp3} $\<M_{\alpha}:{\alpha}<{\beta}\>\in M_\beta$ for all $\beta<\kappa$, and 
  \item\label{pp4} $\bigcup\{M_\alpha:\alpha<\kappa\}$ is also a countably closed elementary submodel of $H(\theta)$.
 \end{enumerate}
 
 \end{theorem}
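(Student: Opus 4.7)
The strategy is to mimic the tree construction behind Theorem \ref{main}, replacing countable leaves with countably closed elementary submodels of size $\mf c$. Fix $\theta$ large enough so that $\kappa, x \in H(\theta)$, and let $M(\emptyset) \prec (H(\theta), \in, \tri)$ be countably closed of size $\kappa$, containing $\kappa$ as a subset and $x$ as an element; such an $M(\emptyset)$ exists since $\kappa = \kappa^\oo$. We then recursively decompose each non-leaf node $a$ with $|M(a)| > \mf c$ as a union
\[
M(a) = \bigcup_{\xi < \zeta_a} M(a^\frown \xi),
\]
where each $M(a^\frown \xi)$ is a countably closed elementary submodel of strictly smaller size. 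Any strictly decreasing sequence of cardinals is finite, so every branch of the resulting tree $T$ reaches a leaf with $|M(a)| = \mf c$ after finitely many steps; enumerating the leaves lex-first yields the candidate sequence $\langle M_\alpha : \alpha < \kappa\rangle$.

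The decomposition at a node with $\lambda = |M(a)|$ splits into two cases. If $\cf(\lambda) > \oo$, the usual L\"owenheim--Skolem argument, combined with $\mu^\oo = \mu^+$ for the relevant $\mu < \lambda$ (so that countably closed hulls of size $\mu$ remain of size $\mu$), produces a sequence $\langle M(a^\frown \xi) : \xi < \cf(\lambda)\rangle$ of countably closed elementary submodels of size $<\lambda$ whose union is $M(a)$, continuous at limits of uncountable cofinality, with $M(a^\frown \eta)$ being a countably closed extension of $\bigcup_{\xi<\eta} M(a^\frown \xi)$ at limits $\eta$ of countable cofinality. If $\cf(\lambda) = \oo$, we use $\oo$-inaccessibility of $\lambda$ to pick an increasing sequence $\langle \lambda_n : n<\oo\rangle$ cofinal in $\lambda$ with $\lambda_n^\oo = \lambda_n$, enumerate $M(a)$ via $\tri$ as $\{y_\beta : \beta < \lambda\}$, and build an increasing $\oo$-chain of countably closed elementary submodels $M(a^\frown n)$ of size $\lambda_n$ each containing $\{y_\beta : \beta < \lambda_n\}$ and the previous $M(a^\frown k)$'s; then $\bigcup_n M(a^\frown n) = M(a)$.

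Properties (I) and (II) are then built in: each leaf is countably closed of size $\mf c$ containing $x$, and every $s \in [\kappa]^\oo$ lies in $M(\emptyset)$ by countable closure, hence in some leaf along a branch through $T$. For the initial-segment condition (III), the lex computation from Theorem \ref{main} transfers: for a leaf $b = (b_0, \dots, b_{m-1})$, the union of leaves lex-below $b$ decomposes as $\bigcup_{j=1}^{m} N_{b,j}$ where $N_{b,j} = \bigcup_{\xi < b(j-1)} M((b\uhr(j-1))^\frown \xi)$. Thanks to the continuity at uncountable-cofinality limits, each $N_{b,j}$ is either a single countably closed elementary submodel or, when $b(j-1)$ has countable cofinality (including finite $b(j-1)$ coming from $\cf(\lambda) = \oo$ decompositions), a countable increasing union of such. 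Flattening produces a countable union of countably closed elementary submodels each containing $x$. Property (pp4) is immediate from $\bigcup_{\alpha<\kappa} M_\alpha = M(\emptyset)$, which is countably closed by construction.

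For property (pp3), the construction must be made canonical: at every step we use the well-order $\tri$ to pick the $\tri$-least decomposition satisfying the listed requirements, so that the entire tree and its lex-ordering are first-order definable from $\kappa$, $x$ and the parameters visible in the relevant submodels. This is where $\Box_\mu$ enters. At a node with $\cf(\lambda) = \oo$, the $\Box_\lambda$-sequence pins down a canonical cofinal $\oo$-sequence in $\lambda$ together with the coherent stage-by-stage bookkeeping needed so that $\langle M_\beta : \beta < \alpha\rangle$ is reconstructible inside $M_\alpha$ from previously available data. The main obstacle I expect is precisely this coherence at singular-countable-cofinality levels: without a square sequence the canonical $\oo$-decompositions at nested levels can fail to agree, which is exactly what would prevent the initial segment of prior models from being an element of $M_\alpha$.
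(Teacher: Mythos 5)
Your plan breaks down at exactly the point the paper flags as its reason for abandoning the tree construction: a large countably closed model cannot, in general, be written as an increasing chain of strictly smaller countably closed elementary submodels. Concretely, take a node of size $\lambda=\mu^+$ where $\mf c<\mu$, $\cf(\mu)=\oo$, $\mu$ is $\oo$-inaccessible and $\mu^\oo=\mu^+$ --- for instance $\kappa=(\mf c^{+\oo})^+$ itself, so this already happens at the root. Any countably closed $N$ satisfies $|N|^\oo=|N|$, so a countably closed submodel of size $<\mu^+$ actually has size $<\mu$; on the other hand any countably closed model containing a set of size $\mu$ has size at least $\mu^\oo=\mu^+$. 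Hence an increasing chain of countably closed models of size $<\lambda$ stalls after $\mu$ steps, long before covering $M(a)$, and the ``countably closed extension of $\bigcup_{\xi<\eta}M(a^\frown\xi)$'' you propose at countable-cofinality limits is forced to have size $\lambda$. Since your verification of condition (III) rests precisely on each $N_{b,j}$ being an increasing union of countably closed models, this is not a repairable detail but the collapse of the scheme. (Two further symptoms: your parenthetical ``so that countably closed hulls of size $\mu$ remain of size $\mu$'' would need $\mu^\oo=\mu$, which is exactly what fails for the relevant $\mu$; and there are no countably closed models of size $\lambda$ with $\cf(\lambda)=\oo$ at all, so your second case can only concern non-closed intermediate nodes, which then undermines the (III) computation one level down.)

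The paper instead argues by induction on $\kappa$, using sage models for smaller cardinals as black boxes. When $\kappa$ is $\oo$-inaccessible it concatenates the sage Davies-trees of an increasing continuous chain $\<K_\alpha:\alpha\le\kappa\>$ of sage models of size $(\max(\mf c,|\alpha|))^\oo$. In the critical case $\kappa=\mu^+$ with $\cf(\mu)=\oo$ it builds a $\kappa\times\oo$ matrix $\<N_{\alpha,j}\>$ with $|N_{\alpha,j}|=\mu_j$, where the rows $K_\alpha=\bigcup_{j<\oo}N_{\alpha,j}$ increase but the columnwise inclusions $N_{\beta,j}\subs N_{\alpha,j}$ are required only for $j\ge k_{\beta,\alpha}$; the square sequence $\<C_\alpha\>$ is what makes these eventual inclusions cohere through limit rows, so that each $K_\alpha$ remains a countable union of countably closed models and condition (III) holds for the concatenated enumeration. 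So you correctly sensed that $\Box_\mu$ enters at countable-cofinality levels, but its role is to make this non-increasing matrix decomposition exist at all, not merely to make a tree construction canonical for property (3).
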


We will say that $\<M_{\alpha}:{\alpha}<{\kappa}\>$ is a \emph{sage Davies-tree} if it is a high Davies-tree satisfying the extra properties \ref{pp3}. and \ref{pp4}. above. 
Finally, let us remark that if one only aims to construct high Davies-trees (which are not necessary sage) then slightly weaker assumptions than \ref{neccond1}. and \ref{mupluss1}. suffice; see Theorem \ref{thm:onlyhigh} for further details. 

In order to state a rough corollary, recall that \ref{neccond1}. and \ref{mupluss1}. are satisfied by all $\kappa$ with uncountable cofinality in the constructible universe. Hence:

\begin{corollary}
 If $V=L$ then there is a sage Davies-tree for ${\kappa}$ over $x$ for any cardinal $\kappa$ with uncountable cofinality. 
\end{corollary}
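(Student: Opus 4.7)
The plan is to deduce the corollary by checking the two numbered hypotheses of Theorem \ref{tm:niceomegadavies} using only the two standard consequences of $V=L$ that the paper has already invoked: the Generalized Continuum Hypothesis (GCH), i.e.\ $2^\lambda=\lambda^+$ for every infinite cardinal $\lambda$, and Jensen's principle $\Box_\mu$ for every uncountable $\mu$. With these in hand the verification is essentially a cardinal-arithmetic bookkeeping exercise, so my proof will consist of one paragraph per hypothesis plus the final invocation of the theorem.

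First I will handle hypothesis (\ref{neccond1}), namely $\kappa=\kappa^\omega$. Under GCH one has the standard identity $\kappa^\lambda=\kappa$ whenever $\lambda<\cf(\kappa)$; since by assumption $\cf(\kappa)>\omega$, taking $\lambda=\omega$ gives exactly what is required.

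Next, for hypothesis (\ref{mupluss1}), I will fix an arbitrary $\mu$ with $\mf c<\mu<\kappa$ and $\cf(\mu)=\omega$; note such a $\mu$ is automatically singular, hence a limit cardinal, which will be used repeatedly. The square principle $\Box_\mu$ is immediate from Jensen's theorem in $L$. For $\mu^\omega=\mu^+$, I would combine K\"onig's inequality $\mu<\mu^{\cf(\mu)}=\mu^\omega$ with the GCH upper bound $\mu^\omega\leq 2^\mu=\mu^+$. For $\omega$-inaccessibility I would fix $\nu<\mu$ and use GCH once more to get $\nu^\omega\leq\nu^+$; since $\mu$ is a limit cardinal, $\nu^+<\mu$, whence $\nu^\omega<\mu$ as desired.

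With both hypotheses verified, Theorem \ref{tm:niceomegadavies} directly produces a sage Davies-tree for $\kappa$ over $x$. I do not expect any genuine obstacle: the entire deductive weight of the result lives inside Theorem \ref{tm:niceomegadavies} itself, whose construction the paper defers to Section \ref{sec:appendix}. The corollary is simply the observation that all of the perhaps-unfamiliar hypotheses of that theorem are automatically bundled together in the constructible universe.
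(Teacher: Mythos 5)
Your proposal is correct and follows the same route as the paper, which simply asserts that hypotheses (1) and (2) of Theorem \ref{tm:niceomegadavies} hold in $L$ for every $\kappa$ of uncountable cofinality; you have merely written out the routine GCH computations ($\kappa^\omega=\kappa$ for $\cf(\kappa)>\omega$, $\mu^\omega=\mu^+$ via K\"onig, $\omega$-inaccessibility of singular limit cardinals) together with Jensen's $\Box_\mu$, exactly as intended.
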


\medskip

Our plan is to postpone the proof of Theorem \ref{tm:niceomegadavies} to the appendix in Section \ref{sec:appendix} because it involves much more work than proving the existence of usual Davies-trees. Indeed, we need a completely different approach than the tree-argument from the proof of Theorem \ref{main}.  Furthermore, we believe that the proof itself gives no extra insight to the use of high Davies-trees in practice. 

So instead, we start with applications first in the next couple of sections. We hope to demonstrate that the existence of high/sage Davies-trees can serve as a simple substitute for technically demanding applications of $\Box_\mu$ and cardinal arithmetic assumptions. 

Some of the presented applications will show that condition (\ref{mupluss1}) in Theorem \ref{tm:niceomegadavies} cannot be weakened to say the Generalized Continuum Hypothesis i.e. $2^\lambda=\lambda^+$ for all infinite cardinal $\lambda$.  We will show that \emph{consistently, GCH holds and there are no high Davies-trees for any $\kappa$ above $\aleph_\oo$}; we will use a supercompact cardinal for this consistency result (see Corollary \ref{cor:nohigh}).

\medskip

Finally, let us point the interested reader to a few related construction schemes which were used to produce similar results: the technique of Jensen-matrices \cite{foreman}, simplified morasses \cite{velleman} and cofinal Kurepa-families \cite[Definition 7.6.11]{walks}. In particular, the latter method was used to produce Bernstein-decompositions of topological spaces (see Section \ref{sec:B}) and splendid spaces (see Section \ref{sec:top}). 

\section{More on large chromatic number and the subgraph structure}\label{sec:morechrom}

We start with an easy result about large chromatic number and subgraphs, very much in the flavour of Section \ref{sec:chrom}. This result will also demonstrate that high Davies-trees might not exist for any $\kappa\ge\aleph_{\oo+1}$ even if GCH holds.

Let $[\aleph_0,\mf c^+]$ denote the graph on vertex set $U\dot\cup V$ where $|U|=\aleph_0$ and $|V|=\mf c^+$, and edges $\{uv:u\in U,v\in V\}$.

\begin{theorem}\label{highchrom}
 Suppose that $G$ is a graph of size $\lambda$ without copies of $[\aleph_0,\mf c^+]$. If there is a high Davies-tree for any $\kappa\ge \lambda$ over $G$ then $\chi(G)\leq \mf c$.
\end{theorem}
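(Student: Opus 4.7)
The plan is to run the high-Davies-tree analogue of the argument in Section \ref{sec:chrom}, coloring $V(G)$ in $\mf c$ colors by induction along a Davies-tree. I fix $\kappa\geq \lambda$ large enough and a high Davies-tree $\<M_\alpha:\alpha<\kappa\>$ over $\{G,\mf c\}$, noting that we may assume $\mf c\subseteq M_\alpha$, and hence $\mf c\subseteq N_{\beta,j}$, for all $\alpha,\beta<\kappa$ and $j<\oo$ (an artifact of standard constructions of such trees). After identifying $V(G)$ with a subset of $\kappa$, the covering property of the Davies-tree yields $V(G)\subseteq \bigcup\{M_\alpha:\alpha<\kappa\}$, and the goal is to build a good coloring $c:V(G)\to \mf c$ stage by stage, coloring $V(G)\cap (M_\beta\setminus M_{<\beta})$ at stage $\beta<\kappa$.

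The whole argument reduces to the following reflection claim: for every $\beta<\kappa$ and every $x\in V(G)\cap(M_\beta\setminus M_{<\beta})$, the set $N_G(x)\cap M_{<\beta}$ is countable. To see this, write $M_{<\beta}=\bigcup\{N_{\beta,j}:j<\oo\}$ and assume toward contradiction that $|N_G(x)\cap N_{\beta,j}|\geq \oo$ for some $j<\oo$. Picking any $U\in[N_G(x)\cap N_{\beta,j}]^\oo$, countable closure gives $U\in N_{\beta,j}$, so the definable set $W=\{v\in V(G):U\subseteq N_G(v)\}$ also lies in $N_{\beta,j}$ and contains $x$. The no-$[\aleph_0,\mf c^+]$ assumption, applied to the potential copy induced on $U\cup W$, forces $|W|\leq \mf c$. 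Elementarity then produces a surjection $f:\mf c\onto W$ with $f\in N_{\beta,j}$, and combined with $\mf c\subseteq N_{\beta,j}$ this gives $W\subseteq N_{\beta,j}\subseteq M_{<\beta}$, contradicting $x\notin M_{<\beta}$.

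Granted the reflection claim, the coloring step is routine. I enumerate $V(G)\cap(M_\beta\setminus M_{<\beta})=\{y_\gamma:\gamma<\mu_\beta\}$ for some $\mu_\beta\leq \mf c$, and at stage $\gamma$ pick $c(y_\gamma)\in\mf c$ avoiding the countable set of colors $\{c(v):v\in N_G(y_\gamma)\cap M_{<\beta}\}$ together with $\{c(y_{\gamma'}):\gamma'<\gamma,\ \{y_\gamma,y_{\gamma'}\}\in E(G)\}$ (of size at most $|\gamma|<\mf c$). By K\"onig's theorem $\cf(\mf c)>\oo$, so the total forbidden set has size strictly less than $\mf c$ and a free color exists in $\mf c$. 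The main obstacle and heart of the proof is the reflection claim: it is the unique place where both the no-$[\aleph_0,\mf c^+]$ hypothesis and the size-$\mf c$ countably closed models are used, playing here the role that Fact \ref{elfact} played in the countable setting of Section \ref{sec:chrom}.
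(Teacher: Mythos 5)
Your proposal is correct and follows essentially the same route as the paper: the same reflection claim (that each vertex of $M_\beta\setminus M_{<\beta}$ has only countably many neighbours in $M_{<\beta}$, proved by pulling a countable set of neighbours into some countably closed $N_{\beta,j}$ and using the absence of $[\aleph_0,\mf c^+]$ to bound the common-neighbour set by $\mf c$ and hence trap it inside $N_{\beta,j}$), followed by the same greedy colouring in $\mf c$ colours. The only cosmetic differences are that you make explicit the step $\mf c\subseteq N_{\beta,j}$ (which the paper leaves implicit) and that your appeal to K\"onig's theorem is unnecessary, since the forbidden set is a union of two sets each of size $<\mf c$.
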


The above theorem is a rather weak version of \cite[Theorem 3.1]{colnum}; there GCH and a version of the square principle was assumed to deduce a more general and stronger result.

\begin{proof}
Let $\langle M_\alpha:\alpha<\kappa\rangle$ be a high Davies-tree for $\kappa\ge \lambda$ over $G$. Without loss of generality, we can suppose that the vertex set of $G$ is $\lambda$ and utilize $\lambda\subseteq \bigcup\{M_\alpha:\alpha<\kappa\}$.

Our plan is to define  $f_\alpha: \lambda\cap M_\alpha\setm M_{<\alpha}\to \mf c$ inductively so that $f=\bigcup\{f_\alpha:\alpha<\kappa\}$ witnesses $\chi(G)\leq \mf c$. That is, $f$ is a chromatic coloring: $f(u)\neq f(v)$ whenever $uv$ is an edge of $G$.
 
 Suppose we defined colorings $f_\alpha$ for $\alpha<\beta$ such that $f_{<\beta}=\bigcup\{f_\alpha:\alpha<\beta\}$ is chromatic. List $\lambda\cap M_\beta\setm M_{<\beta}$ as $\{v_\xi:\xi<\nu\}$ for some $\nu\le \mf c$ and let's define $f_\beta(v_\xi)$ by induction on $\xi<\nu$. Our goal is to  choose $f_\beta(v_\xi)$ from $\mf c\setm \{f_\beta(v_\zeta):\zeta<\xi\}$ to make sure that $f_\beta$ is chromatic, and also that $f_\beta(v_\xi)\neq f_{<\beta}(u)$ for any $u\in \lambda\cap M_{<\beta}$ such that $uv_\xi$ is an edge. If we can do this then $f_{<\beta}\cup f_\beta$ is chromatic as well.
 
Our first requirement is easy to meet since we only want to avoid $\{f_\beta(v_\zeta):\zeta<\xi\}$, a set of size $<\mf c$. The next claim implies that there are not too many edges from $v_\xi$ into $\lambda\cap M_{<\beta}$:
 
 \begin{claim}
  If $v\in \lambda\cap M_{\beta}\setm M_{<\beta}$ then $\{u\in \lambda\cap M_{<\beta}: uv$ is an edge$\}$ is countable.
 \end{claim}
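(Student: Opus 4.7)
The plan is to argue by contradiction. Suppose $v \in \lambda \cap M_\beta \setminus M_{<\beta}$ has uncountably many neighbors in $M_{<\beta}$. Since $M_{<\beta} = \bigcup_{j<\omega} N_{\beta,j}$ is a \emph{countable} union of countably closed elementary submodels of size $\mf c$ containing $G$, the pigeonhole principle produces some $j < \omega$ for which $N_G(v) \cap N_{\beta,j}$ is already uncountable. From here I need only derive a contradiction with $v \notin N_{\beta,j}$.

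The key step is to reflect a large common-neighborhood into $N_{\beta,j}$. I would choose any countable $X_0 \subseteq N_G(v) \cap N_{\beta,j}$; by countable closure, $X_0 \in N_{\beta,j}$. Let $Y = \bigcap_{u \in X_0} N_G(u)$. Since $Y$ is definable from the parameters $G, X_0 \in N_{\beta,j}$, we have $Y \in N_{\beta,j}$, and trivially $v \in Y$. Now I invoke the hypothesis that $G$ contains no copy of $[\aleph_0, \mf c^+]$: if $|Y| \geq \mf c^+$, then $X_0$ (on the countable side) together with any $\mf c^+$ elements chosen from $Y \setminus X_0$ (on the large side) would embed such a forbidden subgraph, a contradiction. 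Hence $|Y| \leq \mf c$.

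By elementarity in $N_{\beta,j}$, there is a surjection $f \colon \mf c \to Y$ with $f \in N_{\beta,j}$. In particular, $v = f(\gamma)$ for some $\gamma < \mf c$. The point of the argument is then that $\gamma \in N_{\beta,j}$, which forces $v = f(\gamma) \in N_{\beta,j} \subseteq M_{<\beta}$, contradicting $v \notin M_{<\beta}$.

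The one subtle step is the very last one: we need every ordinal below $\mf c$ to lie in $N_{\beta,j}$, i.e.\ $\mf c \subseteq N_{\beta,j}$. This is not automatic from countable closure plus $|N_{\beta,j}| = \mf c$ alone (countable closure only gives $\omega_1 \subseteq N_{\beta,j}$ by the usual induction on countable ordinals), so it must be arranged in the construction of the high Davies-tree --- either by taking $\mf c$ as a parameter and building models that contain $\mf c$ as a subset, or by reading off from the construction in Section \ref{sec:appendix} that the $N_{\beta,j}$ produced there already satisfy $\mf c \subseteq N_{\beta,j}$. This is the expected main obstacle, but it is the standard enhancement one makes for countably closed models of size continuum, and once it is in place the argument above closes.
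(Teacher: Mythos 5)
Your argument is correct and is essentially the paper's own proof: the paper likewise picks a countably infinite $A\subseteq N_G(v)\cap N_{\beta,j}$ for some $j$, notes $A\in N_{\beta,j}$ by countable closure, forms $B=\bigcap_{u\in A}N_G(u)\in N_{\beta,j}$ with $v\in B$, and runs the same dichotomy — either $|B|>\mathfrak c$, yielding a forbidden copy of $[\aleph_0,\mathfrak c^{+}]$, or $B\subseteq N_{\beta,j}$ and hence $v\in N_{\beta,j}$, a contradiction. The one correction is that the step you flag as the "main obstacle" is in fact automatic and needs no special arrangement in the construction: a countably closed $N\prec H(\theta)$ contains every real as an element (each $r\colon\omega\to 2$ is a countable subset of $N$ because $\omega\cup 2\subseteq N$), so $\mathbb R\subseteq N$, and transporting this along the $\tri$-least bijection $\mathbb R\to\mathfrak c$, which lies in $N$ by elementarity, gives $\mathfrak c\subseteq N$.
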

\begin{proof} Recall that $M_{<\beta}=\bigcup\{N_{\beta,j}:j<\oo\}$ so that $G\in N_{\beta,j}$ and $[N_{\beta,j}]^\oo\subs N_{\beta,j}$. Let $N(v)=\{u\in \lambda: uv$ is an edge$\}$. If $N(v)\cap M_{<\beta}$ is uncountable then  we can find $A\in[N(v)\cap N_{\beta,j}]^\oo$ for some $j<\oo$. Since $A\in N_{\beta,j}$, the set $B=\bigcap \{N(u):u\in A\}$ is also an element of $N_{\beta,j}$ and $v\in B$. We claim that $|B|>\mf c$; indeed, otherwise $B\subseteq N_{\beta,j}$ and so $v\in N_{\beta,j}$ which contradicts the choice of $v$. However, any point in $A$ is connected to any point in $B$ which gives a copy of the complete bipartite graph  $[\aleph_0,\mf c^+]$ in $G$. This again is a contradiction.
\end{proof}

 So, we can choose $f_\beta(v_\xi)$ and extend our chromatic coloring as desired. In turn, this finishes the induction and the proof of the theorem.

\end{proof}

%

While we do not claim that the existence of a high Davies-tree for $\kappa\ge \lambda$ implies that there are high Davies-trees for $\lambda$ too, the above proof demonstrates that practically we can work with a high Davies-tree for $\kappa$ as a high Davies-tree for $\lambda$.

\begin{corollary}\label{cor:nohigh}
 Consistently, relative to a supercompact cardinal, GCH holds and there are no high Davies-trees for any $\kappa\ge \aleph_{\oo+1}$.
\end{corollary}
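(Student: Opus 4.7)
The plan is to apply Theorem \ref{highchrom} contrapositively. Since GCH forces $\mathfrak c^+ = \aleph_2$, it suffices to produce a ZFC model, obtained from a ground model with a supercompact cardinal, in which GCH holds and there is a graph $G$ of cardinality $\aleph_{\omega+1}$ with no copy of $[\aleph_0,\aleph_2]$ yet with $\chi(G)\geq \aleph_2$. Indeed, for any $\kappa\geq \aleph_{\omega+1}$, a high Davies-tree for $\kappa$ over the parameter $G$ would yield, by Theorem \ref{highchrom} applied with $\lambda=\aleph_{\omega+1}=|G|$, the bound $\chi(G)\le \mathfrak c=\aleph_1$, contradicting $\chi(G)\geq \aleph_2$. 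Thus no such tree can exist for any $\kappa \geq \aleph_{\omega+1}$ in that model.

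To obtain the model, I would begin with a supercompact cardinal $\lambda$, perform the Laver indestructibility preparation, and then force with a carefully chosen collapse or Prikry-type iteration that turns $\lambda$ into $\aleph_{\omega+1}$ while preserving GCH and simultaneously destroying the relevant square/approachability principles at $\aleph_\omega$. Forcings of Magidor, Cummings, Foreman and others are tailored for this: one arranges that, in the extension, GCH holds and $\Box^{}_{\aleph_\omega}$ (or, more relevantly, the consequence of it used in Theorem \ref{tm:niceomegadavies} at the singular $\aleph_\omega$ of cofinality $\omega$) fails.

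The graph $G$ is then built from this failure. It is by now standard in the chromatic-number literature, and is in fact the direction of \cite[Theorem 3.1]{colnum} referenced in the proof of Theorem \ref{highchrom}, that once an appropriate square-type principle fails at a singular cardinal of cofinality $\omega$, a walks-on-ordinals or Todor\v{c}evi\'c-style construction produces a graph on $\aleph_{\omega+1}$ whose chromatic number exceeds $\aleph_1$ and which omits the complete bipartite graph $[\aleph_0,\aleph_2]$ as a subgraph. Plugging this $G$ into Theorem \ref{highchrom} gives the corollary, and the extension to all $\kappa \geq \aleph_{\omega+1}$ is immediate since the same $G$ serves as the forbidden parameter at every such $\kappa$.

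The hard part is not the logical step but the two imported ingredients: the forcing construction giving GCH together with the failure of the relevant square principle from a supercompact, and the explicit construction of a large-chromatic, $[\aleph_0,\aleph_2]$-free graph from that failure. In a full write-up these should be cited as black boxes from the existing literature; the novel content here is only the observation that Theorem \ref{highchrom} converts any such graph directly into the sharp nonexistence statement for high Davies-trees under GCH, thereby showing that condition \ref{mupluss1} of Theorem \ref{tm:niceomegadavies} genuinely goes beyond the Generalized Continuum Hypothesis.
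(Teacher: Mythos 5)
Your overall strategy is the paper's: produce a model of GCH (from a supercompact) containing a graph $G$ on $\aleph_{\oo+1}$ with $\chi(G)>\mf c$ and no copy of $[\aleph_0,\mf c^+]$, then invoke Theorem \ref{highchrom}. Two points need attention.

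First, a genuine logical gap: Theorem \ref{highchrom} applies to a high Davies-tree \emph{over $G$}, so your argument as written only rules out high Davies-trees over the particular parameter $G$, not high Davies-trees over arbitrary parameters $x$ --- which is what the corollary asserts. The paper closes this by exploiting the well-order $\tri$ built into the structure $(H(\theta),\in,\tri)$: let $G^*$ be the $\tri$-least graph on $\aleph_{\oo+1}$ of chromatic number $\mf c^+$ omitting $[\aleph_0,\mf c^+]$. Then $G^*$ is definable from $\aleph_{\oo+1}$ and $\mf c^+$, which lie in every elementary submodel of $(H(\theta),\in,\tri)$, so \emph{every} high Davies-tree for $\kappa\ge\aleph_{\oo+1}$ is automatically a high Davies-tree over $G^*$, and Theorem \ref{highchrom} applies to it. You need some version of this uniformization step.

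Second, your external ingredient is left floating. You propose to re-derive the model by a Laver preparation followed by a collapse or Prikry-type iteration killing square at $\aleph_\oo$, and then to extract the graph from the failure of square by a walks-type construction, attributing this to the ``other direction'' of \cite[Theorem 3.1]{colnum}. That reference is the positive result (GCH plus square gives small coloring number); it does not supply the counterexample, and the claimed implication ``failure of the relevant square at $\aleph_\oo$ under GCH yields a $[\aleph_0,\aleph_2]$-free graph on $\aleph_{\oo+1}$ of chromatic number $>\aleph_1$'' is not an off-the-shelf theorem in the form you state it. The paper instead cites a single precise result, \cite[Theorem 4.7]{splitting} (Hajnal--Juh\'asz--Shelah), which produces exactly this model and graph from a supercompact in one step. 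With that citation substituted for your sketched forcing-plus-walks route, and with the $\tri$-minimality fix above, your argument coincides with the paper's.
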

\begin{proof}
 This will follow from \cite[Theorem 4.7]{splitting}: Consistently, relative to a supercompact cardinal, GCH holds and there is a graph $G$ on vertex set $\aleph_{\oo+1}$ of chromatic number $\mf c^+$ so that $G$ contains no copies of $[\aleph_0,\mf c^+]$. 
 
 Now, working in the above model, fix $\kappa\ge \aleph_{\oo+1}$, a cardinal $\theta$ and a well order $\tri$ of $H(\theta)$. Let $G^*$ be the unique $\tri$-minimal graph on vertex set $\aleph_{\oo+1}$ of chromatic number $\mf c^+$ so that $G^*$ contains no copies of $[\aleph_0,\mf c^+]$. If $\langle M_\alpha:\alpha<\kappa\rangle$ is a high Davies-tree for $\kappa\ge \aleph_{\oo+1}$ from $(H(\theta),\in,\tri)$ then $\langle M_\alpha:\alpha<\kappa\rangle$ must be a high Davies-tree over $G^*$; indeed, $G^*$ is uniquely definable from $\aleph_{\oo+1}$ and $\mf c^+$ using $\tri$, and these parameters are in each relevant submodel of $(H(\theta),\in,\tri)$.
 
 Hence, there are no high Davies-tree for any $\kappa\ge \aleph_{\oo+1}$ in that model by our Theorem \ref{highchrom}.
\end{proof}

\section{Coloring topological spaces}\label{sec:B}

Our first application concerns a truly classical result due to F. Bernstein from 1908 \cite{Bernstein}: there is a coloring of $\mbb R$ with two colors such that no uncountable Borel set is monochromatic. In other words, the family of Borel sets in $\mbb R$ has chromatic number 2. Indeed, list all the uncountable Borel sets as $\{B_\alpha:\alpha<\mf c\}$ and inductively pick distinct $x_\beta,y_\beta\in B_\beta\setm \{x_\alpha,y_\alpha:\alpha<\beta\}$. This can be done since each $B_{\beta}$ contains a Cantor subspace and so has size continuum. Now any map $f:\mbb R\to 2$ that sends $\{x_\alpha:\alpha<\mf c\}$ to $0$ and $\{y_\alpha:\alpha<\mf c\}$ to $1$ is as desired.

 Now, let  $\mc C(X)$ denote the set of Cantor subspaces of an arbitrary topological space $(X,\tau)$. Can we extend Bernstein's theorem to general topological spaces? The above simple argument certainly fails if there are more than $\mf c$ many Cantor subspaces.

\begin{theorem}Suppose that $(X,\tau)$ is a Hausdorff topological space of size $\kappa$. If there is a high Davies-tree for $\kappa$ over $(X,\tau)$ then there is a coloring $f:X\to \mf c$   so that $f[C]=\mf c$ for any $C\in \mc C(X)$.
\end{theorem}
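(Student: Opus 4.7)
The plan is to mimic the Bernstein-style scheme from Section~\ref{sec:case}, with the high Davies-tree playing the role of a chain of countable models. Fix a bijection $r:\kappa\to X$, invoke the hypothesis to obtain a high Davies-tree $\<M_\alpha:\alpha<\kappa\>$ over $\{(X,\tau),r\}$, and set $X_\alpha=X\cap(M_\alpha\setminus M_{<\alpha})$. Every countable $A\subseteq X$ sits in some $M_\alpha$: indeed $r^{-1}[A]\in[\kappa]^\omega\subseteq\bigcup_\beta M_\beta$ lands in some $M_\alpha$ and is then contained in $M_\alpha$ by countable closure. Consequently any Cantor subspace $C\subseteq X$ is the $X$-closure of any of its countable dense subsets, so $C\in M_\alpha$ for some $\alpha$; let $\alpha(C)$ denote the least such, and note $C\subseteq M_{\alpha(C)}$ by the usual argument using $\mf c\subseteq M_{\alpha(C)}$ (a standard feature of the countably closed models of size $\mf c$ from Section~\ref{sec:appendix}).

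I would define $f$ by recursion on $\alpha<\kappa$, specifying $f\uhr X_\alpha$ at stage $\alpha$ and maintaining
\[ (\star)\quad f[V]=\mf c\ \text{ for every Cantor }V\ \text{with }\alpha(V)<\alpha. \]
At stage $\alpha$ there are at most $\mf c$ Cantor subspaces $C$ with $\alpha(C)=\alpha$, each coded by a countable subset of $M_\alpha$. The crucial device for each such $C$ is the decomposition $M_{<\alpha}=\bigcup_{j<\omega}N_{\alpha,j}$ provided by the high Davies-tree, together with the closed sets $F_j:=\overline{C\cap N_{\alpha,j}}^{C}\subseteq C$. No $F_j$ equals $C$: otherwise a countable dense $D\subseteq C\cap N_{\alpha,j}$ sits inside $N_{\alpha,j}$ by countable closure, giving $C=\overline D^X\in N_{\alpha,j}\subseteq M_{<\alpha}$ against $\alpha(C)=\alpha$.

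Next I would split on whether some $F_j$ has nonempty interior in $C$. If yes, zero-dimensionality of $C$ gives a clopen $V\subseteq F_j$; a countable dense $D\subseteq V\cap C\cap N_{\alpha,j}$, built via a countable basis of $V$, sits in $N_{\alpha,j}$, so $V=\overline D^X\in N_{\alpha,j}\subseteq M_{<\alpha}$ and hence $\alpha(V)<\alpha$; $(\star)$ then forces $f[C]\supseteq f[V]=\mf c$ with no new action at stage $\alpha$. If instead every $F_j$ is nowhere dense in the Baire space $C$, then $\bigcup_j F_j$ is meager and the comeager set $C\setminus\bigcup_j F_j$ contains a Cantor subspace---in particular has size $\mf c$. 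Since $C\cap M_{<\alpha}\subseteq\bigcup_j F_j$ and $C\subseteq M_\alpha$, this yields $|C\cap X_\alpha|=\mf c$. For $C$'s in this second case I would enumerate the $\leq\mf c$ pairs $(C,\gamma)$ with $\gamma<\mf c$ as $\<(C_\xi,\gamma_\xi):\xi<\mf c\>$ and by transfinite recursion pick pairwise distinct $y_\xi\in C_\xi\cap X_\alpha$, setting $f(y_\xi)=\gamma_\xi$; this works because $|C_\xi\cap X_\alpha|=\mf c$ while strictly fewer than $\mf c$ points have been claimed so far. Extend $f$ arbitrarily on the rest of $X_\alpha$; then $(\star)$ is preserved, and $f[C]=\mf c$ for every Cantor $C$ by applying $(\star)$ at stage $\alpha(C)+1$.

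The main obstacle will be the dichotomy and its Baire-category half: the \emph{countable} decomposition $M_{<\alpha}=\bigcup_{j<\omega}N_{\alpha,j}$ is exactly what lets $\bigcup_j F_j$ be meager, so that the comeager remainder fits into $C\cap X_\alpha$ and carries $\mf c$ many points. A secondary but real subtlety is the claim $C\subseteq M_{\alpha(C)}$: for this one needs $\mf c\subseteq M_\alpha$, ensuring that the bijection $\mf c\to C$ supplied by elementarity actually enumerates $C$ inside $M_{\alpha(C)}$. This is a standard feature of the high Davies-trees produced in Section~\ref{sec:appendix}.
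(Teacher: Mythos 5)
Your proposal is correct, and its overall architecture coincides with the paper's: recurse along the high Davies-tree, handle each Cantor set at the first stage where it appears, and use a dichotomy between ``a Cantor subset of $C$ already lies in $M_{<\alpha}$, so induction has dealt with $C$'' and ``$C\cap X_\alpha$ has size $\mf c$, so Bernstein's enumeration of pairs applies''. The one genuinely different ingredient is how you establish the dichotomy. The paper's Claim is cardinality-based: if $C\cap M_{<\alpha}$ is uncountable then some $C\cap N_{\alpha,j}$ is uncountable, the closure of a countable dense subset of it is an uncountable closed subset of $C$ belonging to $N_{\alpha,j}$, and by the perfect set property for closed sets (plus elementarity) it contains a Cantor set $D\in N_{\alpha,j}$. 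You instead run a Baire-category argument on the closed sets $F_j=\overline{C\cap N_{\alpha,j}}^{\,C}$: either some $F_j$ has interior, producing a clopen Cantor $V\in N_{\alpha,j}$, or all $F_j$ are nowhere dense and $C\setminus\bigcup_j F_j$ is comeager in $C$, hence of size $\mf c$. Both routes lean on exactly the same two features of high Davies-trees --- the \emph{countable} decomposition $M_{<\alpha}=\bigcup_{j<\oo}N_{\alpha,j}$ into countably closed models, and $\mf c\subseteq M_\alpha$ (so that $C\subseteq M_{\alpha(C)}$) --- and they buy the same thing; yours trades the perfect set theorem for the Baire category theorem. A cosmetic difference: the paper colors only those $C$ whose trace on $M_{<\alpha}$ is countable, whereas your case 2 may color a $C$ whose trace is uncountable but nowhere dense; this is harmless, since $|C\cap X_{\alpha(C)}|=\mf c$ there as well, and your invariant $(\star)$ closes the argument just as the paper's minimality argument does.
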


Let us call such a function $f:X\to \mf c$ \emph{a Bernstein-decomposition of $X$}. Now, if $|X|< \mf c^{+\oo}$ or, more generally, $\kappa$ satisfies the assumptions of Theorem \ref{tm:niceomegadavies} then Bernstein-decompositions for $X$ exist. 

Originally, the consistency of ``any Hausdorff space $X$ has a Bernstein-decomposition'' was originally proved by W. Weiss \cite{weiss} (see \cite{weisssurv} for a survey). For an alternative proof using cofinal Kurepa-families, see \cite[Theorem 7.6.31]{walks}. Let us also mention that a more general (and more technical) coloring result was achieved in \cite[Theorem 3.5]{splitting} using similar but stronger assumptions to the ones in our  Theorem \ref{tm:niceomegadavies}; it is likely that one can do the same using high Davies-trees. Finally, S. Shelah \cite{shelah} showed (using a supercompact cardinal) that consistently there is a 0-dim, Hausdorff space $X$ of size $\aleph_{\oo+1}$ without a Bernstein-decomposition.


\begin{proof} Let  $\langle M_\alpha:\alpha<\kappa\rangle$ be a high Davies-tree for $\kappa$ over $X$. In turn, $X$ and $[X]^\oo$ are covered by $\bigcup \{M_\alpha:\alpha<\kappa\}$. We let $\mc C_{\alpha}=\mc C(X)\cap M_\alpha\setm M_{<\alpha}$, $X_\alpha=X\cap  M_\alpha\setm M_{<\alpha}$ and $X_{<\alpha}=X\cap M_{<\alpha}$. 

\begin{claim}\label{cantorclm} Suppose that $C\in \mc C(X)$ and $C\cap X_{<\alpha}$ is uncountable. Then there is a $D\in M_{<\alpha}\cap \mc C(X)$ such that  $D\subseteq C$.
\end{claim}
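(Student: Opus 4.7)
The plan is to first apply pigeonhole on the decomposition $M_{<\alpha} = \bigcup_{j<\oo} N_{\alpha,j}$: since $C \cap X_{<\alpha} = \bigcup_{j<\oo}(C \cap N_{\alpha,j})$ is uncountable, there is some $j^*<\oo$ with $E_0 := C \cap N_{\alpha,j^*}$ uncountable. It then suffices to produce the desired $D$ inside the single countably closed model $N_{\alpha,j^*}$.

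The natural candidate for $D$ is the perfect kernel of $E_0$ inside the Cantor space $C$. Let $Y \subseteq C$ be the set of condensation points of $E_0$ in $C$ (i.e.\ those $y \in C$ such that every neighborhood of $y$ meets $E_0$ in an uncountable set). A standard Cantor--Bendixson argument, using that $C$ is second countable, shows that $Y$ is closed in $C$, that $E_0 \setminus Y$ is countable, and consequently that $Y$ is perfect (every neighborhood of a point of $Y$ meets $E_0$ in an uncountable set, hence meets $Y$ in an uncountable set) and nonempty (otherwise $E_0 = E_0 \setminus Y$ would be countable). Being a nonempty, compact, metrizable, zero-dimensional, perfect subspace of $X$, the set $Y$ lies in $\mc C(X)$ and satisfies $Y \subseteq C$.

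The remaining, and real, task is to place $Y$ in $N_{\alpha,j^*}$. The idea is to approximate $Y$ by a countable dense subset that is forced into the model by countable closure. Fix a countable base $\{U_n : n<\oo\}$ for the topology of $C$; for each $n$ with $U_n \cap Y \neq \emptyset$ the intersection $U_n \cap E_0$ is uncountable (as $U_n$ contains a condensation point of $E_0$), and since $E_0 \setminus Y$ is countable the set $U_n \cap E_0 \cap Y$ is also nonempty, so pick $e_n$ from it. Set $E = \{e_n : n<\oo\}$. Then $E \subseteq E_0 \subseteq N_{\alpha,j^*}$ is a countable set, so $E \in N_{\alpha,j^*}$ by countable closure. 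Since $(X,\tau) \in N_{\alpha,j^*}$, the topological closure $\overline{E}$ in $X$ is definable from $E$ and $(X,\tau)$, hence $\overline{E} \in N_{\alpha,j^*}$ by elementarity. Because $C$ is compact Hausdorff it is closed in $X$, so $\overline{E}$ agrees with the closure computed inside $C$; as $E \subseteq Y$ and $E$ was chosen dense in $Y$, this closure equals $Y$. Thus $D := Y = \overline{E} \in N_{\alpha,j^*} \subseteq M_{<\alpha}$ is the required Cantor subspace.

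The main obstacle is that $C$ itself need not lie in $N_{\alpha,j^*}$, so neither the perfect kernel $Y$ nor a homeomorphism $C \to 2^\omega$ is directly accessible inside the model; the trick is that the auxiliary countable set $E$ lands in the model automatically by countable closure, and elementarity then drags $Y = \overline{E}$ back in.
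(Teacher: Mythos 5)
Your proposal is correct and follows essentially the same route as the paper's proof: pigeonhole to get a single countably closed $N_{\alpha,j}$ with $C\cap N_{\alpha,j}$ uncountable, extract a countable dense subset which lands in $N_{\alpha,j}$ by countable closure, and use elementarity (via $(X,\tau)\in N_{\alpha,j}$) to pull the closure back into the model. The only (harmless) variation is at the end: the paper takes $A$ dense in all of $C\cap N_{\alpha,j}$ and then selects a Cantor subspace $D\subseteq\bar A$ inside the model by one more appeal to elementarity, whereas you pre-select your dense set inside the perfect kernel so that $\overline{E}$ is already the desired Cantor set.
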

\begin{proof} Indeed, $M_{<\alpha}=\bigcup\{N_{\alpha,j}:j<\oo\}$ and each $N_{\alpha,j}$ is $\oo$-closed. So there must be an $j<\oo$ such that $C\cap N_{\alpha,j}$ is uncountable. Find $A\subseteq C\cap N_{\alpha,j}$ which is countable and dense in $C\cap N_{\alpha,j}$. Note that $A$ must be an element of $N_{\alpha,j}$ as well and hence, the uncountable closure $\bar A$ of $A$ is an element of $N_{\alpha,j}$ (since $\tau\in N_{\alpha,j}$). Now, we can pick $D\subseteq \bar A\subseteq C$ such that $D\in N_{\alpha,j}\cap \mc C(X)\subseteq M_{<\alpha}\cap \mc C(X)$.
\end{proof} 
 
 We define $f_{\alpha}:X_{\alpha}\to \mf c$ so that $f_{\alpha}[C]=\mf c$ for any $C\in \mc C_{\alpha}$ so that $C\cap X_{<\alpha}$ is countable. This can be done just like Bernstein's original theorem; indeed let $$\mc C^*_\alpha=\{C\cap X_\alpha: C\in \mc C_{\alpha}, |C\cap X_{<\alpha}|\leq \oo\}.$$ $\bigcup \mc C^*_\alpha\subseteq X\cap M_\alpha$ and if $\mc C^*_\alpha\neq \emptyset$ then $|\bigcup \mc C^*_\alpha|=\mf c$. Moreover,   $|\mc C^*_\alpha|\leq \mf c$ and $\mc C^*_\alpha$ is $\mf c$-uniform i.e. each element has size $\mf c$.  So, we can use the same induction as Bernstein to find $f_\alpha$.
 

We claim that $f=\bigcup\{f_\alpha:\alpha<\kappa\}$ satisfies the requirements. Indeed, suppose that $C\in \mc C(X)$ and let $\alpha$ be minimal so that $D\subseteq C$ for some $D\in \mc C_\alpha$.  Claim \ref{cantorclm} implies that  $C\cap X_{<\alpha}$ is countable and hence $\mf c=f_\alpha[D]\subseteq f[C]$.

\end{proof}

We remark that in the examples preceding this section, Davies-trees were mainly used to find well behaving enumerations of almost disjoint set systems. This is certainly not the case here for the family $\mc C(X)$. 


\section{Saturated families}\label{sec:saturated}

The following still open problem stands out in the theory of almost disjoint sets: is there, in ZFC, an infinite almost disjoint family $\mc A\subseteq [\oo]^\oo$ so that any $B\in [\oo]^\oo$ either contains an element from $\mc A$ or is covered mod finite by a finite subfamily of $\mc A$.  Such families were introduced by Erd\H os and S. Shelah \cite{satmad} and are called saturated or completely separable. 

Now, in more generality:

\begin{definition}
Let ${\kappa}$ be a cardinal and $\mc F\subs \br {\kappa};{\omega};$.
We say that a family $\mc A$ is {\em $\mc F$-saturated} if
$\mc A\subs \mc F$  and for all $F\in \mc F$ either
\begin{itemize}
 \item $A\subs F$ for some $A\in\mc A $, or
 \item $F\subs^* \bigcup \mc A'$ for some $\mc A'\in \br  \mc A;<{\omega};$.
\end{itemize}

%
%
\end{definition}

So the completely separable families mentioned at the beginning of the section are exactly the almost disjoint $\br {\oo};{\omega};$-saturated families. Our goal is to prove

\begin{theorem}\label{tm:saturated}
If CH holds and there is a high Davies-tree 
for ${\kappa}$ then there is an almost disjoint $\br {\kappa};{\omega};$-saturated family.
\end{theorem}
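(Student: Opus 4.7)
The plan is a transfinite recursion along a high Davies-tree $\<M_\alpha:\alpha<\kappa\>$ for $\kappa$. By CH, $|M_\alpha|=\mf c=\aleph_1$, so I enumerate $[\kappa]^\omega\cap(M_\alpha\setminus M_{<\alpha})$ as $\{F^\alpha_\gamma:\gamma<\omega_1\}$ and run $\omega_1$ substages inside each stage. At substage $(\alpha,\gamma)$, if $F^\alpha_\gamma$ is already saturated by the family $\mc A^{<(\alpha,\gamma)}$ built so far, then do nothing; otherwise add some $A^\alpha_\gamma\in[F^\alpha_\gamma]^\omega$ almost disjoint from every member of $\mc A^{<(\alpha,\gamma)}$. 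I carry along the inductive invariant that at the start of stage $\alpha$, the family $\mc A^{<\alpha}=\bigcup_{\beta<\alpha}\mc A_\beta$ is almost disjoint and already saturates every $F''\in[\kappa]^\omega\cap M_{<\alpha}$.

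The whole argument reduces to the following key claim: under the invariant, if $F\in[\kappa]^\omega\cap(M_\alpha\setminus M_{<\alpha})$ is not yet saturated by $\mc A^{<\alpha}$, then $\mc A_F:=\{A\in\mc A^{<\alpha}:|A\cap F|=\omega\}$ is countable. To establish it, write $M_{<\alpha}=\bigcup_{j<\omega}N_{\alpha,j}$ with each $N_{\alpha,j}$ countably closed of size $\mf c$, and set $F_j=F\cap N_{\alpha,j}$. Countable closure places $F_j$ back inside $N_{\alpha,j}\subseteq M_{<\alpha}$, and any countable $A\in N_{\alpha,j}$ is itself a subset of $N_{\alpha,j}$ (a standard elementarity argument), so any $A\in\mc A_F\cap N_{\alpha,j}$ satisfies $A\cap F\subseteq F_j$. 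Apply the invariant to $F_j$ (the case $F_j$ finite contributes nothing): the clause producing some $A\subseteq F_j\subseteq F$ is forbidden by the non-saturation of $F$, so we are in the alternative $F_j\subseteq^*\bigcup\mc A''_j$ for some finite $\mc A''_j\subseteq\mc A^{<\alpha}$; almost disjointness then forces every $A\in\mc A_F\cap N_{\alpha,j}$ to lie in $\mc A''_j$. Each slice is finite and $\mc A_F=\bigcup_j(\mc A_F\cap N_{\alpha,j})$ is countable.

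With the key claim, substages run smoothly. The members of the current family with infinite intersection with $F^\alpha_\gamma$ are $\mc A_{F^\alpha_\gamma}$ together with at most $|\gamma|<\aleph_1$ sets from the present stage, hence countable; since $F^\alpha_\gamma$ is not covered mod finite by any finite subfamily, a routine $\omega$-step diagonalisation picks $x_n\in F^\alpha_\gamma\setminus(\bigcup_{i\le n}A_i\cup\{x_i:i<n\})$ after listing the relevant family as $\{A_n:n<\omega\}$, giving the required $A^\alpha_\gamma$. The invariant propagates through successor and limit stages by monotonicity, and the cover property $[\kappa]^\omega\subseteq\bigcup_\alpha M_\alpha$ of the high Davies-tree guarantees that every $F\in[\kappa]^\omega$ is handled at some stage.

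The main obstacle is the key claim, and it is precisely where the high Davies-tree earns its keep: the pieces $N_{\alpha,j}$ covering $M_{<\alpha}$ must be countably closed so that the trace $F\cap N_{\alpha,j}$ of an outside countable set can be pulled back into $N_{\alpha,j}$, allowing the invariant to be invoked on a set internal to the model. A plain Davies-tree of countable models would not permit this step, which is why the construction needs the high variant.
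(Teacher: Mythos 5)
Your proof is correct and follows essentially the same route as the paper: a recursion along the high Davies-tree with $\omega_1$ substages per stage (via CH), maintaining the invariant that the family built so far saturates $[\kappa]^\omega\cap M_{<\alpha}$, and using the countable closure of the $N_{\alpha,j}$ to pull the trace $F\cap N_{\alpha,j}$ back into $M_{<\alpha}$ so that the invariant can be invoked on it. Your ``key claim'' (only countably many previously chosen sets meet an unsaturated $F$ infinitely, so one can diagonalize against them) is an equivalent repackaging of the paper's two-case analysis at the successor step, where the diagonalization is instead performed against the traces $N_{\alpha,j}\cap\kappa$ together with the at most countably many sets added earlier in the same stage.
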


First, our theorem gives Baumgartner's result that CH implies the existence of  almost disjoint $\br {\aleph_n};{\oo};$-saturated families for all finite $n$ (see the remark at Problem 37 in \cite{EH2}). Second, in \cite{gold}, similar assumptions (i.e. $\Box_\mu$ and a weak form of $\mu^\oo=\mu^+$ for $\cf(\mu)<\mu$) were used to deduce the consistency of ``there is an almost disjoint $\br {\kappa};{\omega};$-saturated family for all $\kappa$''.

\begin{proof}
Let $\<M_{\alpha}:{\alpha}<{\kappa}\>$ be a  high Davies-tree 
for ${\kappa}$.  So,  
$
M_{<{\alpha}}=\bigcup_{i<{\omega}}N_{\alpha,i}
$
for some   $N_{\alpha,i}\prec  H(\theta)$
with    $\br N_{\alpha,i};{\omega};\subs N^{\alpha}_i,$.

By transfinite recursion on ${\alpha}\le {\kappa}$ we will define 
families  $\mc A_{\alpha}$ such that 
\begin{enumerate}[(1)]
\item $\mc A_{\alpha}\subseteq M_{<\alpha}\cap [\kappa]^\oo$ is almost disjoint,
\item $\mc A_{\alpha}\subs \mc A_{\beta}$ if ${\alpha}<{\beta}$, and
\item $\mc A_\alpha$ is $\mc F_{\alpha}$-saturated where $\mc F_{\alpha}=M_{<\alpha}\cap \br {\kappa};{\omega};$.
\end{enumerate} If we succeed then $\mc A_\kappa$ is the desired almost disjoint $\br {\kappa};{\omega};$-saturated family since $\mc F_\kappa=[\kappa]^\oo$.

In limit steps we can simply take unions so suppose that $\mc A_{\alpha}$ is defined and we will find $\mc A_{\alpha+1}$.

Using CH, we can enumerate $(M_{\alpha}\setm M_{<{\alpha}})\cap \br {\kappa};{\omega};$ as  $\{H_{\xi}:{\xi}<{\omega}_1\}$. By induction on ${\xi}\le{\omega}_1$, we define  families $\mc B_{\xi}$ such that such that 
\begin{enumerate}[(i)]
\item $\mc B_0=\mc A_{\alpha}$, $|\mc B_\xi\setm \mc A_\alpha|\leq \oo$ and  $\mc B_\eta\subs \mc B_{\xi}$ if $\eta<{\xi}<\omg$,
\item $\mc B_{\xi}\subseteq M_{<\alpha+1}\cap [\kappa]^\oo$ is almost disjoint, and
\item $\mc B_{\xi}$ is $\big (\mc F_{\alpha}\cup\{H_{\zeta}:{\zeta}<{\xi}\}\big)$-saturated.
\end{enumerate}

Note that if we succeed then $\mc A_{\alpha+1}=\mc B_\omg$ is the desired family. As before, in limit steps we just take  unions so assume that $\mc B_{\xi}$ is defined and consider $H_{\xi}$. We distinguish two cases:

\medskip

\noindent {\bf Case 1. }{ \em  There a finite subset $\mc C\subs \{N_{\alpha,j}\cap {\kappa}:j<{\omega}\}
\cup (\mc B_{\xi}\setm \mc A_{\alpha} )$ such that $H_{\xi}\subs^*\bigcup \mc C$.}

\medskip
We will show that $\mc B_{{\xi}+1}=\mc B_{\xi}$ satisfies the requirements i.e. $\mc B_{\xi}$ is $\big (\mc F_{\alpha}\cup\{H_{\zeta}:{\zeta}\leq {\xi}\}\big)$-saturated. Of course we only need to deal with $H_\xi$.

Note that whenever $N_{\alpha,j}\cap {\kappa}\in \mc C$ then $D_j=H_{\xi}\cap N_{\alpha,j}\in N_{\alpha,j}$ since 
$\br N_{\alpha,j};{\omega};\subs N_{\alpha,j}$, and so $D_j\in \mc F_{\alpha}$. 
Since $\mc A_{\alpha}$ is $\mc F_{\alpha}$-saturated 
either (a) $A\subs D_j$ for some $A\in \mc A_\alpha$, or (b) 
 $D_j\subs^* \bigcup \mc A_j'$ for some $\mc A_j'\in \br  \mc A_{\alpha};<{\omega};$.

In case (a) holds for any $j$, then $\mc B_{{\xi}+1}=\mc B_{\xi}$ clearly satisfies the requirements. So we can assume that 
\begin{displaymath}
 H_{\xi}\cap N_{\alpha,j}\cap {\kappa}\subs^*  \bigcup \mc A_j'
\end{displaymath}
for all $N_{\alpha,j}\cap {\kappa}\in \mc C$.

Let $\mc B'=\bigcup\{\mc A_j':N_{\alpha,j}\cap {\kappa}\in \mc C \}\cup 
\big(\mc C\cap (\mc B_{\alpha}\setm \mc A_{\alpha})\big).$
Then $\mc B'\in [\mc B_\xi]^{<\oo}$ and
\begin{eqnarray}
 H_{\xi}\subs^* \bigcup \mc C\subs^*\bigcup \mc B', 
\end{eqnarray}
so $\mc B_{{\xi}+1}=\mc B_{\xi}$ satisfies the requirements.

\medskip

\noindent {\bf Case 2.}  {\em $H_{\xi}\setm \bigcup \mc C$ is infinite for all finite 
subset $\mc C\subs \{N_{\alpha,j}\cap {\kappa}:j<{\omega}\}
\cup (\mc B_{\xi}\setm \mc A_{\alpha} )$.}

\medskip
Now, there is an infinite $B_{\xi}\subs H_{\xi}$ such that 
$B_{\xi}\cap \bigcup \mc C$ is finite for all finite 
subset $\mc C\subs \{N_{\alpha,j}\cap {\kappa}:j<{\omega}\}
\cup (\mc B_{\xi}\setm \mc A_{\alpha} )$. Indeed, we can list $\{N_{\alpha,j}\cap {\kappa}:j<{\omega}\}
\cup (\mc B_{\xi}\setm \mc A_{\alpha} )$ as $\{C_n:n\in \oo\}$, pick $b_n\in H_\xi\setm \bigcup\{C_k:k<n\}\cup\{b_k:k<n\}$ and set $B_\xi=\{b_n:n\in \oo\}$.

First, $B_\xi\in M_\alpha$ since $H_\xi\in M_\alpha$ and $M_\alpha$ is $\oo$-closed. So if we let  $\mc B_{\xi+1}=\mc B_{\xi}\cup \{B_{\xi}\}$ then $\mc B_{\xi+1}\subseteq  M_{<\alpha+1}\cap [\kappa]^\oo$. Furthermore, $\mc B_{\xi+1}$ is clearly $\big (\mc F_{\alpha}\cup\{H_{\zeta}:{\zeta}\le{\xi}\}\big)$-saturated.

Finally, we need that $\mc B_{\xi+1}$ is almost disjoint; $B_\xi\cap B$ is clearly finite for all $B\in \mc B_\xi\setm \mc A_\alpha$. If $B\in \mc A_\alpha$ then $B\in M_{<\alpha}$ so $B\in N_{\alpha,j}$ for some $j<\oo$. So $B_\xi\cap B$ is finite again. 
\medskip

This ends the construction of $\<B_{\xi}:{\xi}\le {\omega}_1\>$ and, as mentioned before, we let $\mc A_{{\alpha}+1}=\mc B_{{\omega}_1}$. This finishes the main induction and the proof of the theorem.

\end{proof}

It is still unknown, if one can prove that
there is an almost disjoint $\br {\kappa};{\omega};$-saturated family for all $\kappa$ purely in ZFC; although, significant evidence   hints that the answer is yes \cite{sh935}.

\section{The weak Freese-Nation property}\label{sec:freese}

The next theorem we present concerns the structure of the poset $([\kappa]^\oo,\subseteq)$:

\begin{theorem}\label{FN}
Suppose that CH holds and there is a high Davies-tree for $\kappa$. Then there is a function $F$ with domain $[\kappa]^\oo$ so that \begin{enumerate}
\item $|F(a)|\leq \oo$, and
\item if $a\subseteq b\in [\kappa]^\oo$ then there is $c\in F(a)\cap F(b)$ with $a\subseteq c\subseteq b$.
 \end{enumerate}
 \end{theorem}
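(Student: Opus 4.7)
The plan is to use the high Davies-tree $\<M_\alpha:\alpha<\kappa\>$ for $\kappa$ over $\kappa$, with the usual decomposition $M_{<\alpha}=\bigcup_{j<\omega}N_{\alpha,j}$. By the covering property, each $a\in[\kappa]^\omega$ lies in some $M_\alpha$, so there is a unique \emph{level} $\alpha(a)$ with $a\in M_{\alpha(a)}\setminus M_{<\alpha(a)}$. The key preliminary observation is that the level is monotone under inclusion: if $a\subseteq b$ in $[\kappa]^\omega$, then $\alpha(a)\leq\alpha(b)$. Indeed, countable closedness of $M_{\alpha(b)}$ together with $b\in M_{\alpha(b)}$ force $b\subseteq M_{\alpha(b)}$, hence $a\subseteq M_{\alpha(b)}$, and countable closedness again gives $a\in M_{\alpha(b)}$. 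Interpolation therefore splits into the cross-level case $\alpha(a)<\alpha(b)$ and the same-level case $\alpha(a)=\alpha(b)$.

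I would then define $F$ by recursion on $\alpha(a)$. Under CH, $|[\kappa]^\omega\cap M_\alpha|\leq\mf c^\omega=\aleph_1$, so the level-$\alpha$ sets $[\kappa]^\omega\cap M_\alpha\setminus M_{<\alpha}$ can be enumerated as $\{a^\alpha_\xi:\xi<\mu_\alpha\}$ for some $\mu_\alpha\leq\omega_1$. For $a=a^\alpha_\xi$ set
$$F(a)=\{a\}\cup\{a\cap N_{\alpha,j}:j<\omega\}\cup\bigcup_{j<\omega}F(a\cap N_{\alpha,j})\cup\{a^\alpha_\eta:\eta<\xi,\ a^\alpha_\eta\subseteq a\text{ or }a\subseteq a^\alpha_\eta\}.$$
Each $a\cap N_{\alpha,j}$ is a countable subset of $N_{\alpha,j}$, so by countable closedness it lies in $N_{\alpha,j}\subseteq M_{<\alpha}$; hence $\alpha(a\cap N_{\alpha,j})<\alpha$ and the recursion on level is well-founded. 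Because $\xi<\omega_1$, the final clause contributes countably many sets, so $F(a)$ is a countable union of countable sets and $|F(a)|\leq\omega$.

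To verify interpolation, take $a\subseteq b\in[\kappa]^\omega$. If $\alpha(a)<\alpha(b)=:\beta$, pick $j_0$ with $a\in N_{\beta,j_0}$; then $a\subseteq N_{\beta,j_0}$, and $c:=b\cap N_{\beta,j_0}$ is a countable subset of $N_{\beta,j_0}$, hence $c\in N_{\beta,j_0}\subseteq M_{<\beta}$ with $\alpha(c)<\beta$. By construction $c\in F(b)$ and $F(c)\subseteq F(b)$, while $a\subseteq c\subseteq b$; induction on the larger level applied to the pair $(a,c)$ then produces $c'\in F(a)\cap F(c)\subseteq F(a)\cap F(b)$ with $a\subseteq c'\subseteq b$. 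If $\alpha(a)=\alpha(b)=\alpha$, write $a=a^\alpha_{\xi_0}$ and $b=a^\alpha_{\xi_1}$: the final clause of the definition places whichever of $a,b$ has the smaller enumeration index into the $F$-set of the other, giving $c=a$ (if $\xi_0\leq\xi_1$) or $c=b$ (if $\xi_1<\xi_0$) in $F(a)\cap F(b)$ with $a\subseteq c\subseteq b$.

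The main obstacle is the same-level case: cross-level interpolation reduces cleanly through the slices $b\cap N_{\beta,j_0}$, but within a single level the countable-closedness leverage vanishes and a different mechanism is needed. This is precisely where CH is essential — it bounds $|[\kappa]^\omega\cap M_\alpha|$ by $\aleph_1$, so any enumeration of the level has length $\leq\omega_1$ and each proper initial segment is countable, which in turn allows $F(a)$ to absorb the (at most countably many) earlier subsets \emph{and} supersets of $a$ in the enumeration while remaining countable.
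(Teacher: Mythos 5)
Your proposal is correct and follows essentially the same route as the paper: define $F$ by recursion along the Davies-tree, at each level enumerating the new countable sets in order type $\leq\omega_1$ (this is where CH enters), letting $F(a)$ absorb the earlier comparable sets at the same level together with $\bigcup_j F(a\cap N_{\alpha,j})$, and verifying interpolation by splitting into the same-level case and the cross-level case, the latter reducing through the slice $b\cap N_{\beta,j_0}$ exactly as in the paper (whose corresponding step is the observation that $a_\xi\subseteq b\in M_{<\alpha}$ is impossible by countable closedness, i.e.\ your level-monotonicity lemma). The only cosmetic differences are that you include only the comparable earlier sets at a level where the paper includes all of $\{a_\zeta:\zeta\le\xi\}$, and that you make the well-foundedness of the recursion on $\max(\alpha(a),\alpha(b))$ explicit.
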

 
  The existence of a map $F$ as above is usually stated by saying that the partial order $([\kappa]^\oo,\subseteq)$ has the \emph{weak Freese-Nation-property}. We refer the interested reader to \cite{foreman,freese} and also \cite{milovichFM1,milovichFM2, milovichFM3} for more on the Freese-Nation-property. In \cite{freese}, matrices of elementary submodels are used (very much in the flavour of Section \ref{sec:appendix}), and in the latter two papers, Milovich's version of Davies-trees (i.e. long $\lambda$-approximation sequences) make an appearance.

 
 \begin{proof}
 Let  $\langle M_\alpha:\alpha<\kappa\rangle$ be a high Davies-tree for $\kappa$. We inductively define $F$ on $[\kappa]^\oo\cap M_{<\alpha}$ so that $a\in F(a)$, and (1) and (2) are satisfied when restricted to $a\subseteq b\in [\kappa]^\oo\cap M_{<\alpha}$.
 
 At limit steps, we simply take unions so suppose that $F$ is given on $M_{<\alpha}$ and we define $F(a)$ for $a\in [\kappa]^\oo\cap M_{\alpha}\setm M_{<\alpha}$. To this end, list $[\kappa]^\oo\cap M_{\alpha}\setm M_{<\alpha}$ as $\{a_\xi:\xi<\omg\}$. Now, let $$F(a_\xi)=\{a_\zeta:\zeta\leq \xi\}\cup \bigcup\{F(a_\xi\cap N_{\alpha,j}):j<\oo\}$$ for $\xi<\omg$.
 
 Now, let us check (2) since $F(a_\xi)$ is clearly countable. Let $b\in [\kappa]^\oo\cap M_{<\alpha+1}$; if $b\in M_\alpha\setm M_{<\alpha}$ then $b\in F(b)\cap F(a_\xi)$ and we are done. So suppose that $b\in M_{<\alpha}$ and in turn $b\in N_{\alpha,j}$ and $b\subseteq N_{\alpha,j}$ for some $j<\oo$. If $b\subseteq a_\xi$ then $b\subseteq a_\xi\cap N_{\alpha,j}\in M_{<\alpha}$ so $$b\cap N_{\alpha,j}\subseteq c\subseteq  a_\xi\cap N_{\alpha,j}$$ for some $c\in F(b)\cap F(a_\xi\cap N_{\alpha,j})$.  Hence $b\subseteq c\subseteq a_\xi$ and $c\in F(b)\cap F(a)$. 
 
 If $a_\xi \subseteq b$ then $a_\xi\in N_{\alpha,j}$ as well since $N_{\alpha,j}$ is $\oo$-closed. However, this contradicts $a_\xi\in M_\alpha\setm M_{<\alpha}$.
\end{proof}

We remark that  $([\kappa]^\oo,\subseteq)$ may fail the weak Freese-Nation property, in particular if GCH and $(\aleph_{\oo+1},\aleph_\oo)\onto (\aleph_1,\aleph_0)$, a particular instance of Chang's Conjecture, holds (see \cite[Theorem 12]{freese}).

\section{Locally countable, countably compact spaces} \label{sec:top}

In our final application, we show how to use sage  Davies-trees to construct nice topological spaces. 
Let us recall some topological properties first. It is well known that the set of countable ordinals $\omg$ with the topology inherited from their usual order satisfies the following properties: 
\begin{enumerate}[(i)]
 \item 0-dimensional and $T_2$ i.e. there is a basis of closed and open sets, and any two points can be separated by disjoint open sets;
 \item locally countable and locally compact i.e. any point has a countable and compact neighbourhood;
 \item\label{c:obound} countable sets have compact closure.
\end{enumerate}
  
 Topological spaces with the above three properties are often called \emph{splendid} in the literature \cite{juhawe,moreon, good}. The study of splendid spaces dates back to a long standing open problem of E. van Douwen \cite{douwen}: what are the possible sizes of $T_3$, locally countable and countably compact spaces? Each splendid space satisfies van Douwen's requirements since (\ref{c:obound}) implies that any countable set has an accumulation point i.e. the space is countably compact.
 
 I. Juh\'asz, Zs. Nagy and W. Weiss \cite{juhawe} built splendid spaces of cardinality $\kappa$ for   any $\kappa$ with uncountable cofinality using $V=L$; P. Nyikos later observed that the proof only uses instances of $\Box_\mu$ and certain cardinal arithmetic assumptions \cite{moreon}. An alternative construction is outlined in \cite{dowsurv} using cofinal Kurepa-families. Finally, Juh\'asz, S. Shelah and L. Soukup \cite{moreon} showed that it is consistent that there are no splendid spaces of size bigger than $\aleph_\oo$. Nonetheless, van Douwen's problem is still open in ZFC. 
 
 \medskip
 
 We will now present a straightforward and self contained construction of splendid spaces using sage  Davies-trees:

\begin{theorem}\label{thm:splendid} Suppose CH holds and there is a sage Davies-tree for $\kappa$. Then there is a splendid space of size $\kappa$.

\end{theorem}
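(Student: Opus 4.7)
Let $\langle M_\alpha : \alpha < \kappa\rangle$ be a sage Davies-tree for $\kappa$ over a well-order $\tri$ of $H(\theta)$. I would take the underlying set of the space to be $\kappa$ itself and build the topology by transfinite recursion along this tree. Writing $X_\alpha = \kappa \cap M_\alpha$ and $X_{<\alpha} = \kappa \cap M_{<\alpha}$, the plan is to assign to every ordinal $\beta<\kappa$ a countable ``ladder'' $A_\beta \subseteq \beta$ and to declare the basic neighbourhoods of $\beta$ to be the sets $\{\beta\} \cup (A_\beta \setminus F)$ for $F \in [A_\beta]^{<\omega}$. This forces 0-dimensionality and local countability immediately, and local compactness of $U_\beta := \{\beta\} \cup A_\beta$ together with $T_2$ is obtained by two coherence conditions on the ladders: first, $A_\gamma \subseteq^* A_\beta$ whenever $\gamma \in A_\beta$, which makes each $U_\beta$ open and allows a straightforward inductive proof that $U_\beta$ is compact; second, a thinning condition ensuring that for distinct $\beta,\gamma$ one can separate them by cofinite subsets of the respective ladders.

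At stage $\alpha$ of the recursion, I would assign $A_\beta \subseteq X_{<\alpha} \cap \beta$ to each new point $\beta \in X_\alpha \setminus X_{<\alpha}$ by selecting a $\tri$-least witness inside $M_\alpha$ to a prescribed combinatorial requirement: namely, that $\beta$ should accumulate on a target countable subset of $X_{<\alpha}$. Using CH I enumerate the countable subsets of $X_{<\alpha}$ in type $\omega_1$ and schedule them among the new points, invoking that at each stage there are enough new ordinals and that the $\tri$-least choice is reflected into each of the countably closed submodels $N_{\alpha,j}$ which make up $M_{<\alpha}$. The sage hypothesis $\langle M_\beta:\beta<\alpha\rangle\in M_\alpha$ is crucial here: it lets me refer to the entire prior topology as a single object inside $M_\alpha$, so that elementarity can be applied to it uniformly.

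The principal verification is property (iii): every countable $B \subseteq \kappa$ has compact closure. Given such $B$, pick the least $\alpha$ with $B \in M_\alpha$; countable closure of $M_\alpha$ then gives $B \subseteq X_\alpha$. I split $B$ into its ``new'' part in $X_\alpha \setminus X_{<\alpha}$, which is countable and therefore covered by finitely many $U_\beta$'s, and its ``old'' part in $X_{<\alpha}$. The old part distributes across the countably many $N_{\alpha,j}$, each of which is countably closed and contains the (reflected) tree; on each $N_{\alpha,j}$ the induction hypothesis supplies compactness of the closure internally, and by coherence of the ladders this internal closure coincides with the actual closure in $\kappa$. The main obstacle I anticipate is exactly this last compatibility: one must rule out that some ordinal $\gamma$ constructed after stage $\alpha$ accidentally accumulates on $B$ and destroys compactness of its closure. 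This is where the $\tri$-minimal and hence elementarity-reflected choice of $A_\gamma$ pays off --- any $A_\gamma$ that accumulated on $B$ would by reflection already witness this inside the appropriate $N_{\alpha,j}$, forcing $\gamma$ itself into $M_{<\gamma}$, a contradiction. Once (iii) is in place, (i) and (ii) follow from the coherence conditions and the inductive compactness of the $U_\beta$.
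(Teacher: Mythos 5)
Your overall scheme --- recurse along the sage Davies-tree, use $\langle M_\zeta:\zeta<\alpha\rangle\in M_\alpha$ and $\tri$-minimal choices to keep the construction reflected in the submodels, and use the decomposition $M_{<\alpha}=\bigcup_j N_{\alpha,j}$ to control closures of old countable sets --- matches the paper's. But the local structure you propose does not work. If the basic neighbourhoods of $\beta$ are $\{\beta\}\cup(A_\beta\setm F)$ with $F$ \emph{finite}, then your two coherence requirements are mutually inconsistent: for $\delta\in A_\beta$ with $A_\delta$ infinite, openness of $U_\beta$ forces $A_\delta\subseteq^* A_\beta$, so $A_\delta\cap A_\beta$ is infinite, and then every basic neighbourhood of $\beta$ meets every basic neighbourhood of $\delta$ in an infinite set --- the space is not Hausdorff, and no ``thinning'' can fix this for such related pairs (while making all ladders consist of isolated points kills countable compactness). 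The repair is to remove \emph{compact} sets rather than finite ones, i.e.\ to make $\{\beta\}\cup A_\beta$ the one-point compactification of a countable \emph{clopen} subspace $A_\beta$ that already contains the full closure of the target set; this is exactly what the paper's Lemma~7.2-style extension lemma (Lemma \ref{lm:ext}) does with the neighbourhoods $\{y_\xi\}\cup W\setm F$, $F$ compact, $W$ clopen. Note also that $A_\beta$ must be clopen, not merely open: otherwise a point of $\cl(A_\beta)\setm A_\beta$ cannot be separated from $\beta$.

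Two further points. First, you cannot ``enumerate the countable subsets of $X_{<\alpha}$ in type $\omega_1$'': for large $\alpha$ this set has size up to $\kappa$. What CH gives you is an enumeration in type $\omg$ of $\br\kappa;\oo;\cap M_\alpha\setm M_{<\alpha}$ (size $\le\mf c=\aleph_1$), and property (II) of high Davies-trees plus countable closure of $M_\alpha$ guarantees every countable set is caught at some stage; your targets must also include countable sets of \emph{new} points, not only subsets of $X_{<\alpha}$. Second, your principal verification infers compactness of $\cl(B)$ from compactness of the countably many closures $\cl(B\cap N_{\alpha,j})$; a countable union of compacta is not compact. The correct use of the $N_{\alpha,j}$ is only to show $\oo$-fairness (the closure of $B$ is \emph{countable}, being a countable union of compact, hence countable, pieces handled at earlier stages); compactness of $\cl(B)$ then comes from the new point explicitly adjoined for the target $B$ at stage $\alpha$, and the worry that later points spoil it disappears because a compact set in a Hausdorff space is closed and the later topologies induce the old subspace topology.
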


The most straightforward approach would be to define the required topology on larger and larger portions of $\kappa$ by inductively making the closure of each countable set compact. That is, at each step we consider a countable set $a$, and if the closure of $a$ is not compact yet then we add an extra point to compactify this closure. 

The first step is to show how to make a small (size $\mf c$) space splendid by adding a small number of new points; this is essentially \cite[Lemma 7]{juhawe} but we include a proof here as well:

\begin{lemma}\label{lm:ext}Assume that CH holds. Suppose that $\mc Y_0=(Y_0,\rho_0)$ is a 0-dimensional, $T_2$, locally compact and locally countable  space of size $\omg$ which is also $\oo$-fair i.e. countable sets have countable closure. Then there is $Y\supseteq Y_0$ of size $\omg$ and a topology $\rho\supseteq \rho_0$ on $Y$ so that $\mc Y=(\mc Y,\rho)$ is splendid and $\rho_0=\rho\cap \mc P(Y_0)$.
\end{lemma}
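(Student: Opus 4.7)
My plan is to construct an increasing $\omg$-chain $\<\mc Y_\xi = (Y_\xi, \rho_\xi):\xi < \omg\>$ of spaces extending $\mc Y_0$, each $\mc Y_\xi$ of size at most $\omg$ and retaining all five properties of $\mc Y_0$ (0-dim, $T_2$, locally compact, locally countable, $\oo$-fair), with $\rho_\zeta = \rho_\xi \cap \mc P(Y_\zeta)$ for $\zeta < \xi$ and $\mc Y_\eta = \bigcup_{\xi<\eta} \mc Y_\xi$ at limit $\eta$. The final $\mc Y = (Y,\rho)$ with $Y = \bigcup_{\xi<\omg} Y_\xi$ will be splendid provided one diagonalizes so that every countable $A \subs Y$ has its closure compactified at some stage; under CH we have $|[Y_\xi]^\oo|\le \omg$, so a standard bookkeeping enumerates all countable subsets in $\omg$ steps even though $Y$ is being built on the fly.

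The key successor step is the following: given $\mc Y_\xi$ and a countable $A \subs Y_\xi$ with $\bar A = \cl_{\mc Y_\xi}(A)$ not compact, I would add one point $\infty_\xi$ and compactify. Since $\bar A$ is closed, countable, locally compact, 0-dimensional and $T_2$ in the subspace topology, it decomposes as $\bar A = \bigsqcup_{n<\oo} K_n$ with each $K_n$ compact open in $\bar A$. Using local compactness and 0-dimensionality of $\mc Y_\xi$, I would lift each $K_n$ to a pairwise disjoint compact open set $V_n \subs Y_\xi$ with $V_n \cap \bar A = K_n$, with all $V_n$ contained inside a carefully chosen countable open tube $T \supseteq \bar A$ whose closure in $\mc Y_\xi$ is itself countable (such $T$ exists by local countability together with $\oo$-fairness). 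Then set $Y_{\xi+1} = Y_\xi \cup \{\infty_\xi\}$ and generate $\rho_{\xi+1}$ from $\rho_\xi$ together with the basic neighborhoods $U_N = \{\infty_\xi\} \cup \bigcup_{n\ge N} V_n$ of $\infty_\xi$ for $N<\oo$. By construction, each $U_N$ is a clopen compact neighborhood of $\infty_\xi$ in $\mc Y_{\xi+1}$---essentially a one-point compactification of $\bigsqcup_{n\ge N} V_n$---and $\cl_{\mc Y_{\xi+1}}(A) = \bar A \cup \{\infty_\xi\}$ becomes compact.

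The hard part is verifying that $\mc Y_{\xi+1}$ retains $T_2$ and $\oo$-fairness globally. The delicate issue is separating $\infty_\xi$ from a point $y \in Y_\xi \setminus \bigcup_n V_n$: one needs the family $\{V_n\}$ to be locally finite at such $y$, and this is precisely why the $V_n$ must be constrained to the controlled tube $T$, so that any $y$ outside $\cl_{\mc Y_\xi}(T)$ is immediately separable via its disjoint-from-$T$ neighborhood, while points of $\cl_{\mc Y_\xi}(T)\setminus T$ require a more careful inductive choice of the lifts $V_n$ (shrinking the $V_n$ down within the countable tube and exploiting that the possible ``problem points'' form a countable closed set). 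This is where most of the technical work sits. Finally, $\oo$-fairness at limits of countable cofinality is maintained by the bookkeeping, which ensures that every new countable subset appearing in $Y_\eta$ has already been scheduled for compactification at an earlier stage; the compatibility $\rho_0 = \rho \cap \mc P(Y_0)$ holds automatically since every newly introduced basic open set contains a freshly-added point $\infty_\xi \notin Y_0$.
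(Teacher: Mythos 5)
Your overall architecture --- an $\omg$-chain of extensions with CH-driven bookkeeping, adding one new point per scheduled countable set and one-point compactifying a controlled neighborhood of its closure --- is exactly the paper's. But the step you explicitly set aside (``this is where most of the technical work sits'') is the entire content of the successor step, and your version of it has a genuine unresolved problem. For $U_N=\{\infty_\xi\}\cup\bigcup_{n\ge N}V_n$ to be a compact clopen neighborhood and for $\mc Y_{\xi+1}$ to be Hausdorff, you need $\bigcup_{n\ge N}V_n$ to be closed in $Y_\xi$, i.e.\ the family $\{V_n\}$ must be locally finite. If some $y\in Y_\xi$ lies in $\cl_{\rho_\xi}\bigl(\bigcup_{n\ge N}V_n\bigr)\setm\bigcup_{n\ge N}V_n$, then (since each $V_n$ is compact, hence closed) every neighborhood of $y$ meets $\bigcup_{n\ge M}V_n$ for \emph{every} $M$, so $y$ and $\infty_\xi$ cannot be separated and $U_N$ is not closed. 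Lifting the pieces $K_n$ of $\bar A$ one at a time to compact open $V_n$ gives no control over how the $V_n$ accumulate inside $\cl(T)\setm T$, and the fix you gesture at (shrinking the lifts, handling the countable set of ``problem points'') is precisely the nontrivial part of the lemma; it is not carried out.

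The paper sidesteps all of this by not decomposing $\bar A$ at all. Using local countability, take a countable open $V\supseteq \bar A=\cl_{\rho_\xi}(a_\xi)$; by $\oo$-fairness $\cl_{\rho_\xi}(V)$ is countable; and inside this countable set one builds, by a straightforward induction using 0-dimensionality, a single countable set $W$ with $\bar A\subs W\subs \cl_{\rho_\xi}(V)$ which is \emph{clopen in $Y_\xi$}. One then declares the neighborhoods of the new point $y_\xi$ to be $\{y_\xi\}\cup W\setm F$ for $F$ compact in $\rho_\xi$, so that $\{y_\xi\}\cup W$ is literally the one-point compactification of $W$. Because $W$ is clopen, every point of $Y_\xi\setm W$ has a neighborhood disjoint from $W\cup\{y_\xi\}$, so Hausdorffness, 0-dimensionality, local compactness and countability, preservation of the old subspace topology, and compactness of $\cl_{\rho_{\xi+1}}(a_\xi)$ are all immediate --- the local finiteness issue you worry about simply disappears. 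I recommend replacing your $K_n$/$V_n$ lifting with this single clopen $W$: it is exactly the ``careful inductive choice'' you defer, and once it is in place the rest of your argument (bookkeeping, limit stages, $\rho_0=\rho\cap\mc P(Y_0)$) goes through as you describe.
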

\begin{proof}Let $Y=Y_0\cup\{y_\xi:\xi<\omg\}$ and enumerate $[Y]^\oo$ as $\{a_\xi:\xi<\omg\}$ so that $y_\zeta\in a_\xi$ implies $\zeta<\xi$. 

We will use $y_\xi$ to make the closure of $a_\xi$ compact. More precisely, we define 
topological spaces $\mc Y_{\xi}=\<Y_{\xi},\rho_{\xi}\>$ for $\xi\leq \omg$, where 
$Y_{\xi}=Y_0\cup\{y_{\zeta}:{\zeta}<{\xi}\}$,
such that 
\begin{enumerate}[(i)]
 \item\label{enum:z} $\mc Y_{\xi}$ is a 0-dimensional, $T_2$, locally compact and locally countable  space,
\item\label{enum:zxizieta}  if ${\zeta}<\xi$ then $\rho_{\zeta}=\rho_{\xi}\cap \mc P(Y_{\zeta})$, and
\item\label{conv} $\cl_{\rho_{\xi+1}}(a_\xi)$ is compact.
\end{enumerate}
In limit steps we simply take unions, so we need to construct $\mc Y_{\xi+1}$ assuming that $\mc Y_{\xi}$ is defined. Consider the countable, closed $A=\cl_{\rho_\xi}(a_\xi)$ and suppose that $A$ is not compact; otherwise, we let $\rho_{\xi+1}$ be the topology generated by $\rho_\xi\cup\{\{y_\xi\}\}$.

Now, note that $A$ consists of $\cl_{\mc Y_0}(a_\xi)$ and some points of the form $y_\zeta$ for $\zeta<\xi$.  It is an easy exercise to prove that there is a countable, $\rho_\xi$-clopen $W\subseteq Y_\xi$ which contains $A$; indeed, first find a countable open $V\subseteq Y_\xi$ containing $A$. Then working in the countable set $\cl_{\rho_\xi}(V)$, find a clopen $A\subseteq W$ as desired by inductively covering more and more points of $A$ while avoiding more and more points from $\cl_{\rho_\xi}(V)\setm A$. 

Now, we define $\rho_{\xi+1}$ to be the topology generated by 
\begin{displaymath}
\rho_\xi\cup\{\{y_\xi\}\cup W\setm F:F\textmd{ is compact in }\rho_\xi\}. 
\end{displaymath}

So $ \{y_\xi\}\cup W$ with the subspace topology (from $\rho_{\xi+1}$) is really just the one-point compactification of $W$ (with $\rho_\xi$). The fact that $W$ is countable and clopen implies that $\rho_{\xi+1}$ is locally countable and 0-dimensional. Furthermore, the set $W\cup \{y_\xi\}$ is clearly compact and clopen so $\rho_{\xi+1}$ is locally compact and $\cl_{\rho_{\xi+1}}(a_\xi)$ must be compact as well.

Now, $\mc Y=\mc Y_\omg$ is as desired.
 
\end{proof}

Now, using the above lemma we can prove the theorem:

\begin{proof}[Proof of Theorem \ref{thm:splendid}]
Let $\<M_{\alpha}:{\alpha}<{\kappa}\>$ be a sage  Davies-tree for ${\kappa}$ from $(H(\theta),\in,\tri)$ where $\tri$ is a well order of $H(\theta)$. As always, let
$
M_{<{\alpha}}=\bigcup_{j<{\omega}}N_{\alpha,j}
$
for some   $N_{\alpha,j}\prec (H(\theta),\in,\tri)$
with    $\br N_{\alpha,j};{\omega};\subs N_{\alpha,j}$. 
%
%
%

By transfinite recursion on ${\alpha}\le {\kappa}$ we will define topological spaces
$\mc X_{\alpha}=\<X_{\alpha}, \tau_{\alpha}\>$,
where $X_{\alpha}= {\kappa}\cap M_{<{\alpha}}$, such that
\begin{enumerate}[(1)]
 \item\label{enum:topprop} $\mc X_{\alpha}$ is a 0-dimensional, $T_2$, locally compact and locally countable space,
 \item\label{enum:fair} $\mc X_{\alpha}$ is $\oo$-fair,
 \item\label{enum:reconstr} $\tau_\alpha$ is \emph{reconstructible} i.e. $\tau_\alpha$ can be uniquely defined from $\kappa$ and $\<M_\zeta:\zeta<\alpha\>$, 
 

 \item\label{backint} if ${\alpha}<{\beta}$ then $\tau_{\alpha}=\tau_{\beta}\cap \mc P(X_{\alpha})$, and
\item\label{enum:Aclosed} if $a\in \br {\kappa};{\omega};\cap M_{\alpha}$ then $\cl_{\tau_{\alpha+1}}(a)$ 
is compact. 
\end{enumerate}
We do what we promised in the remark above: we define the topology on larger and larger sets while making the closure of more and more countable sets compact. Condition (\ref{backint}) guarantees that we do not interfere in later steps with the topology  we defined earlier and, in particular, compact sets remain compact. Furthermore, (\ref{enum:Aclosed}) ensures that we deal with all countable sets eventually. In turn, $\mc X_\kappa$ will be the desired splendid space of size $\kappa$.


If $\alpha$ is a limit ordinal then we let $\tau_\alpha$ be the topology generated by $\bigcup\{\tau_\beta:\beta<\alpha\}$. The only property which is not straightforward is that no countable set has uncountable closure suddenly i.e.
\begin{claim}
 $\mc X_\alpha$ is $\oo$-fair.
\end{claim}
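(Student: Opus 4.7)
The plan is to show any countable $a\subseteq X_\alpha$ has countable $\tau_\alpha$-closure by reducing the problem to condition (\ref{enum:Aclosed}) applied at a single earlier stage $\beta<\alpha$.

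First I would establish subspace coherence at the limit step: using the inductive coherence (\ref{backint}) together with the definition of $\tau_\alpha$ as the topology generated by $\bigcup\{\tau_\beta:\beta<\alpha\}$, one verifies that $\tau_\beta=\tau_\alpha\cap\mc P(X_\beta)$ for every $\beta<\alpha$, so each $\mc X_\beta$ is a topological subspace of $\mc X_\alpha$.  From this, Hausdorffness, local compactness, local countability and $0$-dimensionality of $\mc X_\alpha$ are inherited from the $\mc X_\beta$'s.  Crucially, any $\tau_\beta$-compact $K\subseteq X_\beta$ remains compact in $\mc X_\alpha$ by subspace invariance, hence $\tau_\alpha$-closed by Hausdorffness, and already countable because $\mc X_\beta$ is locally countable.

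Next I would find a single $\beta<\alpha$ with $a\in M_\beta$.  Once such a $\beta$ is in hand, (\ref{enum:Aclosed}) yields a $\tau_{\beta+1}$-compact set $\cl_{\tau_{\beta+1}}(a)$, which by the previous paragraph is a countable $\tau_\alpha$-closed set containing $a$, forcing $\cl_{\tau_\alpha}(a)\subseteq\cl_{\tau_{\beta+1}}(a)$ to be countable as required.  To locate $\beta$, I would exploit the sage Davies-tree structure: from $M_{<\alpha}=\bigcup_{j<\omega}N_{\alpha,j}$ and the countable closedness of each $N_{\alpha,j}$, the pieces $a_j:=a\cap N_{\alpha,j}$ satisfy $a_j\in N_{\alpha,j}\subseteq M_{<\alpha}$, so $a_j\in M_{\gamma(j)}$ for some $\gamma(j)<\alpha$; then sageness ($\<M_\gamma:\gamma<\alpha\>\in M_\alpha$) and the countable closedness of $M_\alpha$ together let us absorb the sequence $\<a_j:j<\omega\>$ inside a suitable earlier model to obtain a single $\beta<\alpha$ with $a\in M_\beta$.

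The main obstacle is precisely this location step when $\cf(\alpha)=\omega$, because then $a$ need not lie as a subset inside any $M_\beta$ with $\beta<\alpha$, and the mere fact that each $a_j$ lies in some $M_{\gamma(j)}$ does not immediately promote $a$ itself into a single earlier model.  The fallback, should the direct sageness argument falter, is to handle each $a_j$ separately: each $C_j:=\cl_{\tau_{\gamma(j)+1}}(a_j)$ is a countable $\tau_\alpha$-closed set containing $a_j$, and then an extra argument — based on first countability of $\mc X_\alpha$ (from local compactness plus local countability) and the coherence established above — is needed to show that every $\tau_\alpha$-limit of $a$ lies in $\bigcup_j C_j$, which is still countable.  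Either route reaches the desired conclusion, but the case analysis on $\cf(\alpha)$ is where the proof has to be most careful.
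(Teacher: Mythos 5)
Your primary route --- locating a single $\beta<\alpha$ with $a\in M_\beta$ and applying the compact-closure condition once --- does not work, and not only when $\cf(\alpha)=\omega$. Even when $\cf(\alpha)>\omega$, so that $a\subseteq M_{<\beta}$ for some $\beta<\alpha$, the set $M_{<\beta}$ is only a countable union of countably closed models, not itself countably closed, so a countable subset of it need not be an element of it, let alone of some $M_\gamma$ with $\gamma<\alpha$; the high Davies-tree only guarantees $a\in M_\gamma$ for some $\gamma<\kappa$, which may well be $\geq\alpha$. The sageness-based ``absorption'' you sketch also does not produce an earlier model: $\<M_\gamma:\gamma<\alpha\>\in M_\alpha$ together with countable closedness of $M_\alpha$ at best places the sequence $\<a_j:j<\omega\>$ into $M_\alpha$ itself (and even that is unclear, since $M_{<\alpha}\not\subseteq M_\alpha$ in general), not into any $M_\beta$ with $\beta<\alpha$.

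Your fallback is the paper's actual argument, but you stop exactly at the step that carries the proof. The missing piece is the identity $\cl_{\tau_\alpha}(a)=\bigcup_{j<\omega}\cl_{\tau_\alpha}(a\cap N_{\alpha,j})$, and the mechanism is local countability rather than first countability per se: given $x\in\cl_{\tau_\alpha}(a)$, one picks a \emph{countable} open neighbourhood $U_x$ of $x$ lying in some $N_{\alpha,i}$; being a countable element of that model, $U_x\subseteq N_{\alpha,i}$, hence $a\cap U_x\subseteq a\cap N_{\alpha,i}$ and $x\in\cl_{\tau_\alpha}(a\cap U_x)\subseteq\cl_{\tau_\alpha}(a\cap N_{\alpha,i})$. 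First countability alone would not suffice: you need the neighbourhood to be countable \emph{and} trapped inside a single $N_{\alpha,i}$. With this identity in hand, your treatment of the pieces $a_j=a\cap N_{\alpha,j}$ (each an element of the countably closed $N_{\alpha,j}$, hence of some $M_{\gamma(j)}$ with $\gamma(j)<\alpha$, with compact and therefore countable closure at stage $\gamma(j)+1$, which remains closed in $\tau_\alpha$ by coherence) is correct and matches the paper.
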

\begin{proof}  Suppose that $a\in [M_{<\alpha}\cap \kappa]^\oo$. Fix a countable 
neighborhood  $U_x\in \tau_\alpha$ of $x$ for each $x\in  \cl_{\tau_\alpha}(a)$; 
in turn, $x\in \cl_{\tau_\alpha}(a\cap U_x)$. If $U_x\in N_{{\alpha},i}$, then
$U_x\subs N_{{\alpha},i}$, and so $x\in \cl_{\tau_{\alpha}}(a\cap N_{{\alpha},i})$.

In turn, 
$$\cl_{\tau_\alpha}(a)=\bigcup\{\cl_{\tau_\alpha}(a\cap N_{\alpha,j}):j<\oo\}.$$

However, $b_j=a\cap N_{\alpha,j}\in N_{\alpha,j}\subseteq M_{<\alpha}$ since 
$N_{\alpha,j}$ is countably closed. So $b_j\in M_\xi$ for some $\xi<\alpha$. 
Now, property (\ref{enum:Aclosed}) implies that $B_j=\cl_{\tau_{\xi+1}}(b_j)$ is 
compact and hence countable (indeed, cover $B_j$ by countable open sets and take 
a finite subcover). By property (\ref{backint}), $B_j$ is still a compact set 
containing $b_j$ in $\tau_\alpha$ so $\cl_{\tau_\alpha}(b_j)=B_j$ as well. Hence 
$\cl_{\tau_\alpha}(a)$ is countable.
\end{proof}

Now, we deal with the case of $\alpha=0$, and with the construction of $\mc X_{\alpha+1}$ from $\mc X_\alpha$. This can be done simultaneously. Our main objective is to make the closure of each $a\in \br {\kappa};{\omega};\cap M_{\alpha}$ compact and we shall do that by applying Lemma \ref{lm:ext}. We will show this now while preserving properties (\ref{enum:topprop})-(\ref{enum:Aclosed}). 

First, note that $|X_{\alpha+1}\setm X_{\alpha}|=\aleph_1$ since $M_\alpha\models |M_{<\alpha}|<\kappa$. Second:

\begin{claim}\label{clm:clopenint}
 $X_{\alpha}\cap M_{\alpha}$ is clopen in $\mc X_{\alpha}$.
\end{claim}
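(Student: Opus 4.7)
The plan is to first observe that $\tau_\alpha \in M_\alpha$, which comes for free from the sage property \ref{pp3} of the Davies-tree: since $\langle M_\zeta : \zeta<\alpha\rangle \in M_\alpha$ and $\kappa \in M_\alpha$, the reconstructibility condition \ref{enum:reconstr} ensures $\tau_\alpha$ is definable in $M_\alpha$ from these parameters. In particular $\mc X_\alpha = (X_\alpha, \tau_\alpha) \in M_\alpha$, so any first-order statement about $\mc X_\alpha$ with parameters in $M_\alpha$ reflects into $M_\alpha$.

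For openness of $X_\alpha \cap M_\alpha$, I would fix $x \in X_\alpha \cap M_\alpha$. Since $\mc X_\alpha$ is locally countable, locally compact, $0$-dimensional and $T_2$, the point $x$ has a countable clopen compact neighborhood in $\tau_\alpha$; elementarity lets me pick such a $U$ inside $M_\alpha$ (say, the $\tri$-least one). Countable closedness of $M_\alpha$ then gives $U \subseteq M_\alpha$, and $U \subseteq X_\alpha$ is automatic since $\tau_\alpha$ is a topology on $X_\alpha$. Hence $U \subseteq X_\alpha \cap M_\alpha$, witnessing that $x$ is an interior point.

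The closedness is where I expect the real work, since $M_\alpha$ itself is not a member of $M_\alpha$ and so elementarity cannot be applied to the set $M_\alpha \cap X_\alpha$ directly. My plan is to argue locally: pick $y \in X_\alpha \setminus M_\alpha$ together with any countable clopen compact neighborhood $U$ of $y$ in $\tau_\alpha$. Such a $U$ is a countable compact $0$-dimensional Hausdorff space, hence metrizable (in fact homeomorphic to a countable successor ordinal), and in particular first countable. It will then suffice to show that $D := U \cap M_\alpha$ is closed in $U$, since the previous paragraph already gives $D$ open in $U$; then $U \setminus D$ is a clopen neighborhood of $y$ in $\mc X_\alpha$ disjoint from $M_\alpha$, as required.

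To show $D$ is closed in $U$, I would take $z \in U$ in the closure of $D$ and use first countability to choose a sequence $(d_n)_{n<\omega}$ in $D$ with $d_n \to z$ in $\mc X_\alpha$. Each $d_n$ is in $M_\alpha$, so by countable closedness the sequence itself (as a function $\omega \to M_\alpha$) is an element of $M_\alpha$. Since $\mc X_\alpha$ is Hausdorff, $z$ is the unique limit of this sequence in $\tau_\alpha$, a description whose only parameters are the sequence and $\tau_\alpha$, both of which lie in $M_\alpha$. Elementarity then forces $z \in M_\alpha$, so $z \in D$, which completes the proof.
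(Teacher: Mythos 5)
Your proof is correct, and the openness half coincides with the paper's argument: $\tau_\alpha\in M_\alpha$ via property (3) of the construction and \ref{pp3}, then a countable neighbourhood of $x$ chosen by elementarity is a subset of $M_\alpha$. (One small misattribution there: $U\subseteq M_\alpha$ follows from $U\in M_\alpha$, $|U|\le\omega$ and Fact \ref{elfact}, not from countable closedness $[M_\alpha]^\omega\subseteq M_\alpha$, which is the reverse containment.) For closedness you take a genuinely different route. The paper argues: if $x$ is an accumulation point of $X_\alpha\cap M_\alpha$, pick a countable $a\subseteq X_\alpha\cap M_\alpha$ with $x\in\cl_{\tau_\alpha}(a)$; countable closedness gives $a\in M_\alpha$, elementarity gives $\cl_{\tau_\alpha}(a)\in M_\alpha$, and the $\omega$-fairness hypothesis (2) of the induction makes this closure countable, hence a subset of $M_\alpha$, so $x\in M_\alpha$. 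You instead localize to a countable compact clopen neighbourhood $U$, invoke the Mazurkiewicz--Sierpi\'nski-type fact that countable compact Hausdorff spaces are metrizable to extract a convergent sequence from $U\cap M_\alpha$, put that sequence into $M_\alpha$ by countable closedness, and recover its limit as the unique witness to a formula with parameters in $M_\alpha$. Both are sound; the paper's version is shorter because it exploits the inductive hypothesis of $\omega$-fairness that is already on the table, while yours trades that for a classical (and heavier) topological fact but has the mild advantage of not needing fairness at all, only local compactness, local countability, $T_2$ and countable closedness of $M_\alpha$.
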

\begin{proof}
Indeed, $\tau_\alpha\in M_{\alpha}$ since $\<M_{\zeta}:{\zeta}<{\alpha}\>\in M_{\alpha}$. So if $x\in X_{\alpha}\cap M_{\alpha}$ then $M_\alpha$ contains a (countable) neighbourhood of $x$ which is then a subset of $M_\alpha$ as well. So   $X_{\alpha}\cap M_{\alpha}$ is open.

Second, if $x$ is a $\tau_\alpha$-accumulation point of  $X_{\alpha}\cap M_{\alpha}$ then there is a countable $a\subseteq X_{\alpha}\cap M_{\alpha}$ so that $x\in \cl_{\tau_\alpha}(a)$. As before, $\cl_{\tau_\alpha}(a)\in M_\alpha$ since $a,\tau_\alpha\in M_\alpha$; so  $\cl_{\tau_\alpha}(a)\subs M_\alpha$ since $\cl_{\tau_\alpha}(a)$ is countable. Hence $x\in M_\alpha$. This proves that $X_{\alpha}\cap M_{\alpha}$ is closed.
\end{proof}

Now, let $\mc Y_0$ be  $X_{\alpha}\cap M_{\alpha}$ with the subspace topology from $\tau_\alpha$. Apply Lemma \ref{lm:ext} to find a topology $\rho$ on $Y=\kappa\cap M_\alpha\supseteq Y_0$ which is splendid and satisfies  $\rho_0=\rho\cap \mc P(Y_0)$. Actually, choose $\rho$ to be the $\tri$-minimal topology which satisfies these requirements. Now, we simply let $\tau_{\alpha+1}$ be the topology generated by $\tau_\alpha\cup \rho$; $\tau_{\alpha+1}$ is now clearly reconstructible by the minimal choice of $\rho$.

The only non-trivial condition to verify is that $\mc X_{\alpha+1}$ is $\oo$-fair i.e.  $\cl_{\tau_{\alpha+1}}(a)$ is countable if $a\subseteq X_{\alpha+1}$ is countable. First, note that $X_{\alpha+1}\cap M_\alpha$ is open in $\mc X_{\alpha+1}$. Now, $\cl_{\tau_{\alpha+1}}(a)=\cl_{\tau_{\alpha+1}}(a\setm M_\alpha)\cup \cl_{\tau_{\alpha+1}}(a\cap M_\alpha)$. The set $\cl_{\tau_{\alpha+1}}(a\setm M_\alpha)=\cl_{\tau_\alpha}(a\setm M_\alpha)$ is countable by the inductive hypothesis and the second term is compact (indeed, $a\cap M_\alpha\in [\kappa]^{\le\oo}\cap M_\alpha$) and so countable as well.

This finishes the construction of $\mc X_{\alpha+1}$ and hence the main induction and the proof of the theorem.

\end{proof}

The take-away from this proof (especially compared with the original from \cite{juhawe}) should be that sage Davies-trees allowed us to simply lift the base-case of our construction of splendid spaces (i.e. Lemma \ref{lm:ext}) to construct arbitrary large splendid spaces.

Our last remark is that a splendid space can be used to produce cofinal Kurepa-families \cite{good}, and so CH and the existence of sage Davies-trees for $\kappa$ implies the existence of a cofinal Kurepa family in $\br \kappa; \oo;$.

\section{Appendix: how to construct sage Davies-trees?}\label{sec:appendix}

\newcommand{\N}{K}
\newcommand{\kk}{K}
\newcommand{\mm}{M}
\newcommand{\mmcal}{{\mc M}}
\newcommand{\nn}{N}
\newcommand{\nncal}{{\mc N}}
\newcommand{\sm}{{L}}
\newcommand{\sicc}{$\sigma$-c.c}

\newcommand{\kkk}{K^*}

%
%
%
%
%

The goal of this section is to prove Theorem \ref{tm:niceomegadavies}, that is 
the existence of sage Davies-trees for $\kappa$. 
{The main  difficulty of the proof 
comes from the fact that a large countably 
closed model can not be written as a continuous chain of smaller, still countably 
closed models in general. The fact that we need to show that  
$\<M_{\alpha}:{\alpha}<{\beta}\>\in M_\beta$ holds adds multiple layers 
of difficulties compared to constructing only high Davies-trees. In particular, we needed to abandon the original tree-construction and take a completely different road; this will be somewhat reminiscent of the \emph{dominating matrices} from \cite{freese}. Whenever we say or write \emph{submodel} in this section, we will mean an elementary submodel of $(H(\theta),\in, \tri)$ where $\tri$ is a well order of $H(\theta)$.

\smallskip


\smallskip

We start by a definition:

\begin{definition}
 We say that $\sm\prec H(\theta)$ is a {\em sage model for ${\kappa}$ over $x$} if
there is a sage Davies-tree $\<M_{\alpha}:{\alpha}<{\kappa}\>$ for ${\kappa}$ over $x$ 
 such that $\sm=\bigcup_{{\alpha}<{\kappa}}M_{\alpha}$.
 
 \smallskip
 \end{definition}
 
In particular, a sage model is countably closed. If $L$ is a sage model for ${\kappa}$ over $x$ then we fix an arbitrary sage Davies-tree $\<M_{\alpha}:{\alpha}<{\kappa}\>$ with union $L$ and write
\begin{displaymath}
\mathcal{M}(\sm)=\<\mm_{\alpha}:{\alpha}<{\kappa}\>\quad\text{ and }\quad
\mathcal{M}(\sm,{\alpha})=\mm_{\alpha},
\end{displaymath}
and
\begin{displaymath}
\nncal(\sm,{\beta},j)=\nn_{{\beta},j}
\end{displaymath}
where   $\nn_{\beta,j}\prec H(\theta)$ with $[\nn_{\beta,j}]^\oo\subs \nn_{\beta,j}$ and 
$x\in \nn_{\beta,j}$ for $j<\oo$ such that $$\bigcup\{\mm_{\alpha}:\alpha<\beta\}=\bigcup\{\nn_{\beta,j}:j<\oo\}.$$

\smallskip

So Theorem  \ref{tm:niceomegadavies} is equivalent to the following:

\begin{theorem}\label{tm:sage-model}
There is a sage model $\sm({\kappa},x)$  for ${\kappa}$ over $x$ whenever
\begin{enumerate}
 \item ${\kappa}={\kappa}^{\omega}$, and
 \item ${\mu}$ is ${\omega}$-inaccessible, ${{\mu}}^{\omega}={\mu}^+$ and
 $\Box^{}_{\mu}$ holds for all  $\mu$ with $\mf c< {\mu}<{\kappa}$ and  $\cf({\mu})={\omega}$.
 \end{enumerate}
\end{theorem}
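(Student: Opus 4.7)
\medskip

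The plan is to prove Theorem \ref{tm:sage-model} by transfinite induction on $\kappa$, simultaneously constructing the sage model $L(\kappa,x)$ and its underlying sage Davies-tree $\langle M_\alpha:\alpha<\kappa\rangle$. Since $\kappa=\kappa^\omega$ forces $\cf(\kappa)>\omega$, the induction splits into three cases: (a) the base $\kappa=\mathfrak c$ or $\kappa=\mathfrak c^+$; (b) $\kappa$ a limit cardinal of uncountable cofinality; and (c) $\kappa=\lambda^+$ a successor. In cases (a) and (b) we can stay fairly close to the old tree-construction from Theorem \ref{main}, but case (c) splits further into the easy subcase $\lambda^\omega=\lambda$ and the genuinely hard subcase $\cf(\lambda)=\omega$, which is where all the assumptions in clause (\ref{mupluss1}) — $\omega$-inaccessibility, $\mu^\omega=\mu^+$, and $\Box_\mu$ — are used.

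For the easy cases, I would first handle $\kappa\le\mathfrak c^+$ by taking a continuous $\subseteq$-increasing chain of countably closed submodels of size $\mathfrak c$ containing $x$ and the prior chain, where the high-Davies-tree condition at a limit of cofinality $\omega$ is satisfied by taking the $N_{\beta,j}$ to be any cofinal $\omega$-subchain. For $\kappa$ a limit cardinal with $\cf(\kappa)>\omega$, choose a cofinal sequence $\langle\kappa_\xi:\xi<\cf(\kappa)\rangle$ of $\omega$-inaccessible cardinals below $\kappa$ (possible by clause (\ref{mupluss1}) applied to the $\omega$-inaccessibles) and inductively obtain sage models $L_\xi=L(\kappa_\xi,x_\xi)$ with $L_\xi\in L_{\xi+1}$ and $x_\xi$ coding $\langle L_{\xi'}:\xi'<\xi\rangle$; concatenating their Davies-trees gives a sage Davies-tree for $\kappa$. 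For the successor case $\kappa=\lambda^+$ with $\lambda^\omega=\lambda$, the same idea works: a continuous $\subseteq$-chain of sage models for $\lambda$, each containing the previous chain as an element, is concatenated; initial segments within a single $L_\alpha$ are handled by the inherited sage structure, and the boundaries between consecutive $L_\alpha$'s present no issues because each $L_\alpha$ is itself countably closed of size $\lambda$.

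The main obstacle is the hard subcase $\kappa=\lambda^+$ with $\cf(\lambda)=\omega$. Here a continuous chain of sage models for $\lambda$ is not available (sage models for $\lambda$ in general do not exist under the current assumptions, since $\lambda^\omega=\lambda^+>\lambda$), so one cannot simply concatenate. Following the dominating-matrices paradigm hinted at in the excerpt, the plan is to fix a $\Box_\lambda$-sequence $\langle C_\gamma:\gamma<\lambda^+\rangle$ and a strictly increasing sequence $\langle\lambda_n:n<\omega\rangle$ with $\lambda_n=\lambda_n^\omega$ cofinal in $\lambda$, and then to construct a matrix $\{M_\gamma^n:\gamma<\lambda^+,\,n<\omega\}$ of countably closed submodels such that $|M_\gamma^n|=\lambda_n$, $M_\gamma^n\in M_\gamma^{n+1}$, each row $\langle M_\gamma^n:\gamma<\lambda^+\rangle$ is a sage Davies-tree for $\lambda_n^+$ (which exists by induction, as $\lambda_n^+<\lambda<\kappa$ and $\lambda_n^+$ either satisfies the hypotheses directly or reduces to a smaller case), and each column $\bigcup_n M_\gamma^n$ is a countably closed model of size $\lambda$. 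The crucial use of $\Box_\lambda$ is at limit ordinals $\gamma$ of cofinality $\omega$: the coherent sequence $C_\gamma$ gives a canonical cofinal $\omega$-sequence $\langle\gamma_k:k<\omega\rangle$, and one defines $M_\gamma^n$ by combining the $M_{\gamma_k}^{n+k}$ in a way that stays countably closed via $\lambda_n^\omega=\lambda_n$ and $\mu^\omega=\mu^+$. The assumption $\mu^\omega=\mu^+$ for singular $\mu<\kappa$ of countable cofinality controls the size jump when passing from a column to its union, and the coherence of $\Box_\lambda$ is what lets the diagonal enumeration recover $\langle M_\beta:\beta<\alpha\rangle$ inside $M_\alpha$ (giving property (\ref{pp3}) of a sage Davies-tree).

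Once the matrix is constructed, the sage Davies-tree for $\kappa=\lambda^+$ is read off by enumerating the rows: set $M_\gamma=\bigcup_n M_\gamma^n$ for each $\gamma<\lambda^+$, with the finite-union condition of a high Davies-tree witnessed by the countably many $M_{<\gamma}^n$'s for $n<\omega$ (each of which is a countably closed elementary submodel containing $x$ by construction). Property (\ref{pp4}) — that $L=\bigcup_\gamma M_\gamma$ is countably closed — follows because any countable subset of $L$ lies in some $M_\gamma^n$ by a standard reflection argument using $\kappa^\omega=\kappa$. The inductive call at cardinals $\lambda_n^+$ is legitimate because the hypotheses of the theorem are preserved downwards: every $\mu$ in the range $\mathfrak c<\mu<\lambda_n^+$ inherits $\omega$-inaccessibility, $\mu^\omega=\mu^+$ and $\Box_\mu$ from the assumptions on $\kappa$. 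The hardest technical point will be verifying the coherence of the matrix at the $\omega$-cofinal limits; this is the precise place where the original proof in the literature invokes $V=L$ and where $\Box_\lambda$ plus $\mu^\omega=\mu^+$ are the minimal replacements.
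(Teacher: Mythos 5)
Your overall strategy—induction on $\kappa$, with the case split between ($\omega$-inaccessible-like) cardinals handled by concatenating smaller sage Davies-trees and the hard case $\kappa=\lambda^+$, $\cf(\lambda)=\omega$, handled by a $\Box_\lambda$-guided matrix of models—is exactly the architecture of the paper's proof, and your identification of where each hypothesis is used is accurate. However, the description of the hard case contains two genuine errors that the construction cannot survive as written.

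First, you assert that each column $\bigcup_n M_\gamma^n$ is "a countably closed model of size $\lambda$". This is impossible: countable closure of a set of size $\lambda$ forces $\lambda^\omega\le\lambda$, while the standing hypothesis gives $\lambda^\omega=\lambda^+>\lambda$ — the very obstruction you correctly invoke two sentences earlier to explain why sage models for $\lambda$ do not exist. The actual construction can only arrange that each column is a union of \emph{countably many} countably closed models (the paper's "$\sigma$-c.c." property), and the entire difficulty of the limit cases is managing the ordinals $\gamma$ of cofinality $\omega$ where countable closure of the individual matrix entries genuinely fails; this is precisely what the square sequence is for. Second, your final sage Davies-tree has entries $M_\gamma=\bigcup_n M_\gamma^n$ of size $\lambda$, but a high Davies-tree requires $|M_\alpha|=\mf c$ for every $\alpha$. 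You never explain how to break the matrix entries down to size $\mf c$. The fix (and what the paper does) is to make each successor entry $M_{\gamma+1}^n$ itself a sage \emph{model} for $\lambda_n$, i.e.\ the union of a sage Davies-tree of models of size $\mf c$ obtained from the induction hypothesis at $\lambda_n$, and then to take as the final sequence the lexicographic concatenation over triples $(\gamma+1,n,\zeta)$ of the $\zeta$-th model of the tree underlying $M_{\gamma+1}^n$. The initial-segment condition for this triple-indexed enumeration is then verified by writing each initial segment as (a column) $\cup$ (finitely many matrix entries) $\cup$ (an initial segment inside one sage model), each piece being a countable union of countably closed models. Relatedly, your verification of countable closure of the final union ("any countable subset of $L$ lies in some $M_\gamma^n$") fails for the same reason as the first point; the correct argument takes $A\in[L]^\omega$, finds $\alpha$ with $A\subseteq K_\alpha$ using $\cf(\kappa)>\omega$, and transfers $A$ along a bijection $K_\alpha\to\lambda$ lying in $L$ to an element of $[\kappa]^\omega\subseteq L$. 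Your sketch of the easy cases and of the role of coherence at $\omega$-cofinal limits is on target, but without repairing the two points above the hard case does not produce a high Davies-tree at all.
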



\begin{proof}
We prove the theorem by induction on ${\kappa}$: the base case of  ${\kappa}=\mf c$ is trivial, so suppose that the statement is true for $\mf c\leq {\lambda}<{\kappa}$. If ${\lambda}={\lambda}^{\omega}<{\kappa}$ and $x\in H(\theta)$ is a parameter let $\sm({\lambda},x)$ denote a sage model of size ${\lambda}$.

%

There will be two cases we need to consider: either $\kappa$ is ${\omega}$-inaccessible or ${\kappa}={\mu}^+$ for some ${\mu}>\mf c$ with $\cf({\mu})={\omega}$. Indeed, if $\mu<\kappa$ is minimal such that $\mu^\oo=\kappa$ then $\cf (\mu)=\oo$. Otherwise,  $\kappa=\mu^\oo=\sup_{i<\cf(\mu)}\mu_i^\oo$ where $(\mu_i)_{i<\cf(\mu)}$ is any cofinal sequence in $\mu$. So, $\mu\leq \mu_i^\oo$ for some $i<\cf(\mu)$ and hence $\kappa=\mu^\oo\leq (\mu_i^\oo)^\oo=\mu_i^\oo<\kappa$, a contradiction.

Let us say that  $X\subs H(\theta)$ is {\em \sicc} if $X$ is the union of countably many countably 
closed elementary submodels of $H(\theta)$. Now, we deal with the above described two cases:

\medskip
\noindent {\bf Case I.}  {\em ${\kappa}$ is ${\omega}$-inaccessible.}
\medskip

Enumerate $[\kappa]^\oo$ as 
$\{y_\alpha:\alpha<\kappa\}$. By induction on $\alpha<\cf(\kappa)$ we define an increasing sequence of models 
$\<\kk_{\alpha}:{\alpha}\le {\kappa}\>$
such that 
$|\kk_{\alpha}|=(\max(\mf c,|{\alpha}|))^{\omega}$
and that 
$\kk_{{\alpha}+1}$ is a sage model for each ${\alpha}<{\kappa}$
as follows.

For $\alpha=0$, we just find any countably closed model $\kk_0$ of size $\mf c$ with 
$x\in \kk_0$.

Now, if $\alpha$ is a limit ordinal then we let 
$\kk_\alpha=\bigcup\{\kk_{\beta}:{\beta}<\alpha\}$.

Finally, given $\kk_\alpha$ we define  $\kk_{\alpha+1}$: let 
\begin{displaymath}
x_{{\alpha}+1}=\{x,y_{\alpha}, \<\mmcal(\kk_{{\gamma}+1},{\zeta}): 
{\gamma}+1\le {\alpha,{\zeta}}<|\kk_{{\gamma}+1}|\>\} 
\end{displaymath}
and
\begin{displaymath}
 \kk_{{\alpha}+1}=\sm((\max(\mf c,|{\alpha}+1|))^{\omega}, x_{{\alpha}+1})
\end{displaymath}

\begin{claimn}
  $\kk_{\alpha}$ is countably closed unless $\cf({\alpha})={\omega}$.
\end{claimn}
 
If ${\alpha}={\beta}+1$ or ${\alpha}=0$, then the statement is trivial because $\kk_{\alpha}$  
is a sage model.

Assume  $\cf(\alpha)>\oo$. If $a\in [\kk_\alpha]^\oo$ then there is 
$\zeta<\alpha$ so that $a\in [\kk_\zeta]^\oo$. So $a\in [\kk_{\zeta+1}]^\oo$ since 
$\kk_\zeta\subseteq \kk_{\zeta+1}$. The model $\kk_{\zeta+1}$ is countably closed so $a\in 
\kk_{\zeta+1}\subseteq \kk_\alpha$ as desired. 

\medskip

\begin{claimn}
 If $\cf(\alpha)=\oo$ then 
$\kk_{\alpha}$ is \sicc.\end{claimn}

Indeed,  take any cofinal $\oo$-sequence $\<\alpha_k\>_{k<\oo}$ 
in $\alpha$.
Then $\kk_{\alpha}=\bigcup_{k<{\omega}}\kk_{{\alpha}_k+1}$,
and every $\kk_{{\alpha}_k+1}$ is countably closed.

\medskip

Now, our goal is to show that $L=\kk_{\kappa}$ is a sage model; informally,  the concatenation of the sequences
$\<\mc M(\kk_{{\alpha}+1}):{\alpha}<{\kappa}\>$ will witness this. More precisely, let $\Big\langle\<{\alpha}_\eta,{\zeta}_\eta\>:\eta<{\kappa}\Big\rangle$
be the lexicographically increasing enumeration of the set 
\begin{displaymath}
 \{
 \<{\alpha}+1,{\zeta}\>:{\alpha}+1<{\kappa},{\zeta}<|\kk_{{\alpha}+1}|
 \},
\end{displaymath}
and write
\begin{displaymath}
 \mm'_\eta=\mmcal(\kk_{{\alpha}_\eta},\zeta_\eta)
\end{displaymath}
for $\eta<{\kappa}$.


We will show that the  sequence $\<\mm'_{\zeta}:{\zeta}<{\kappa}\>$
witnesses that $L=\kk_{\kappa}$ is a sage model.

First, it is clear that
$$L=\bigcup_{{\alpha}+1<{\kappa}}\kk_{{\alpha}+1}=
\bigcup_{{\alpha}+1<{\kappa}}\Big(\bigcup_{\zeta<|\kk_{{\alpha}+1}|}\mmcal(\kk_{{\alpha}+1},\zeta)\Big)=
\bigcup_{\eta<{\kappa}}\mm'_\eta.$$

Furthermore, $L$ is a countably closed model by the first claim above.

Second, we need to show that $\<\mm'_\eta:\eta<{\kappa}\>$ is a sage Davies-tree; first, we prove that $\<\mm'_\eta:\eta<{\kappa}\>$ is a high Davies-tree.

If ${\eta}<{\kappa}$ and  ${\alpha}_{\eta}={\beta}+1$ then
\begin{align*}
 \bigcup_{\sigma<\eta}\mm'_\sigma
 =
 \bigcup\{\mm'_\sigma: {\alpha}_\sigma<{\alpha}_\eta\}
 \cup\bigcup\{\mm'_\sigma: {\alpha}_\sigma={\alpha}_\eta\ \land\ \zeta_\sigma<\zeta_\eta\}
 =\\
 \bigcup\{\mm'_\sigma: {\alpha}_\sigma\le {\beta}\}
 \cup\bigcup\{\mm'_\sigma: {\alpha}_\sigma
 ={\beta}+1\ \land\ \zeta_\sigma<\zeta_\eta\}=\\
 \bigcup \{\kk_{{\alpha}+1}:{\alpha}+1\le {\beta}\}\cup 
 \bigcup\{\mmcal(\kk_{{\beta}+1},\zeta):\zeta<\zeta_\eta\}
 =\\
 \kk_{{\beta}}\cup \bigcup\{\nncal(\kk_{{\beta}+1},\zeta_\eta,j):j<{\omega}\}.
\end{align*}
Since $\kk_{\beta}$ is \sicc\ by the claims and every $\nncal(\kk_{{\beta}+1},\zeta_\eta,j)$
is countably closed, we proved that $\bigcup_{\sigma<\eta}\mm'_\sigma$
 is also \sicc. Also, the parameter $x$ is contained in all the models. Finally, using the sets $y_\alpha$ we made sure that $L$ covers $[\kappa]^\oo$. So $\<\mm'_\eta:\eta<{\kappa}\>$ really is a high Davies-tree.

To  show that $\<\mm'_\sigma:\sigma<\eta\>\in \mm'_\eta$ observe that 
if ${\alpha}_{\eta}={\beta}+1$, then 
\begin{align}\label{eq:cut}
\<\mm'_\sigma:\sigma<\eta\>
=\<\mm'_\sigma:{\alpha}_\sigma<{\alpha}_\eta\>^\frown
&\<\mm'_\sigma:{\alpha}_\sigma={\alpha}_\eta\
 \land\ \zeta_\sigma<\zeta_\eta\>=\\\notag
 \<\mm'_\sigma:{\alpha}_\sigma\le{\beta}\>^\frown
&\<\mm'_\sigma:{\alpha}_\sigma={\beta}+1\
 \land\ \zeta_\sigma<\zeta_\eta\>.
\end{align}

Now $\<\mm'_\sigma:{\alpha}_\sigma\le{\beta}\>$ is the lexicographical enumeration of
$$\mf M_{\beta}=\<\mmcal(\kk_{\gamma+1},\zeta ):{\gamma}+1\le {\beta}, {\zeta}<|\kk_{\gamma+1}|\>.$$
Since $\mf M_{\beta}\in x_{{\beta}+1}\subs \mm (\kk_{{\beta}+1}, {\zeta}_{\eta})=
\mm'_{\eta}$ 
we have
\begin{equation}\label{eq:param1}
 \<\mm_\sigma:{\alpha}_\sigma\le{\beta}\>\in \mm'_{\eta}.
\end{equation}
Finally, since $\mmcal(\kk_{{\beta}+1})$ is a sage Davies-tree, we have 
\begin{equation}\label{eq:sage}
 \<\mmcal(\kk_{{\beta}+1},{\zeta}):{\zeta}<{\zeta}_{\eta}\>\in  \mmcal(\kk_{{\beta}+1},{\zeta}_{\eta})=
 \mm'_{\eta}.
\end{equation}
Now \eqref{eq:cut}, \eqref{eq:param1}
and \eqref{eq:sage} give that $\<\mm'_\sigma:\sigma<\eta\>\in \mm'_\eta$. 

This finishes the proof of $\<\mm'_\eta:\eta<{\kappa}\>$ being a sage Davies-tree and hence the proof of Case I.

\bigskip

\noindent{\bf Case II.}  {\em ${\kappa}={\mu}^+$ for some ${\mu}>\mf c$ with $cf({\mu})={\omega}$.}

\medskip
Let $\<C_{\alpha}:{\alpha}<{\mu}^+\>$ witness that
$\Box_{{\mu}}$ holds  and
fix an increasing  sequence of regular cardinals $\<{\mu}_j:j<{\omega}\>$
with ${{\mu}_j}^{\omega}={\mu}_j$ and ${\mu}=\sup_{j<\oo} {\mu}_j$ (e.g. 
take any cofinal sequence $\<\nu_j:j\in\oo\>$ in $\mu$ and let $\mu_j=({\nu_j}^\oo)^+$). 
Also, let $\br {\kappa};{\omega};=\{y_{\alpha}:{\alpha}<{\kappa}\}$.


The plan is to define a matrix of elementary submodels with $\kappa$ rows and 
$\oo$ columns so that the rows union up to the desired sage model. 
Unfortunately, we need some technical assumptions to carry out this 
construction. In more detail, we will construct elementary submodels 
 $\<\N_{{\alpha},j}:{\alpha}<{\kappa},j<{\omega}\>$  of $H(\theta)$ by induction 
on
$\alpha<\kappa$  such that writing $\kk_{\alpha}=\bigcup\limits_{j\in{\omega}} 
\N_{\alpha,j}$, \smallskip
properties 
(A)--(\ref{pr:last}) below hold:
\begin{enumerate}[(A)]
 \item\label{Nst-elem}\label{pr:first}  $\N_{{\alpha},j}\prec H(\theta)$, 
$|\N_{{\alpha},j}|= {\mu}_j$ and ${\mu}_j+1\subs \N_{{\alpha},j}$,  \smallskip
\item\label{Nst-monk}  $\N_{\alpha,j}\subs \N_{\alpha,j+1}$ and $\N_{\alpha,j}\subs \N_{\alpha+1,j}$,\smallskip
\item   \label{Nst-incr}   $\forall {\beta}<{\alpha}$ $\exists k_{{\beta},{\alpha}}<\oo$ such that 
$\N_{{\beta},j}\subs \N_{{\alpha},j}$ for all $ j\ge k_{{\beta},{\alpha}}$, \smallskip
\item \label{pr:extrahomog}
if $|C_{\alpha}|\le {\mu}_j$ and ${\gamma}\in C_{\alpha}'$, then $\N_{{\gamma}+1,j}\subs \N_{{\alpha},j}$,\smallskip
\item    \label{pr:manyparam} $\N_{\alpha+1,j}$ is a sage model defined as follows: let 
\begin{multline*}
x_{{\alpha}+1,j}=\{x,y_{\alpha},\<\mmcal(\N_{{\gamma}+1,j}):{\gamma}+1\le {\alpha},j<{\omega},
{\zeta}<|\N_{{\gamma}+1,j}|\>,\\
\<\mmcal(\N_{{\alpha}+1,i},{\zeta}):i<j,{\zeta}<|\N_{{\alpha}+1,i}|\>\}  
 \end{multline*}
and define
$$\N_{{\alpha}+1,j}=
\sm({\mu}_{j}, x_{{\alpha}+1,j}
),$$
\item \label{pr:manyomegaclosed}

$\br \N_{{\alpha},j};{\omega};\subs \N_{{\alpha},j} $ unless $\cf({\alpha})={\omega}$
and ${\alpha}=\sup C_{\alpha}'$,\smallskip
\item \label{pr:manyallclosed} 

$\kk_{\alpha}$ is \sicc. and

\item \label{pr:last}\label{pr:Ncont}
if ${\alpha}<{\kappa}$ is a limit ordinal then $\kk_{\alpha}=\bigcup_{{\beta}<{\alpha}}\kk_{\beta}$. \smallskip
\end{enumerate}

Here, $C_{\alpha}'$ denotes the set of accumulation points of $C_\alpha$ i.e. $\gamma\in C'_\alpha$ iff $\gamma=\sup(C_\alpha\cap \gamma)$. 

 \begin{figure}[H]
 \centering
\scalebox{1} 
{
\begin{pspicture}(0,-3.1789062)(11.202812,3.1589062)
\psline[linewidth=0.04cm,arrowsize=0.05291667cm 2.0,arrowlength=1.4,arrowinset=0.4]{->}(1.1609375,-2.5610938)(1.1809375,3.1389062)
\psline[linewidth=0.03cm,linestyle=dashed,dash=0.16cm 0.16cm,arrowsize=0.05291667cm 2.0,arrowlength=1.4,arrowinset=0.4]{->}(0.9809375,-2.2610939)(7.1809373,-2.2610939)
\psline[linewidth=0.03cm,linestyle=dashed,dash=0.16cm 0.16cm,arrowsize=0.05291667cm 2.0,arrowlength=1.4,arrowinset=0.4]{->}(0.9809375,0.13890626)(7.1809373,0.13890626)
\psline[linewidth=0.03cm,linestyle=dashed,dash=0.16cm 0.16cm,arrowsize=0.05291667cm 2.0,arrowlength=1.4,arrowinset=0.4]{->}(0.9809375,1.7389063)(7.1809373,1.7389063)
\psdots[dotsize=0.2](1.9809375,-2.2610939)
\psdots[dotsize=0.2](7.7809377,-2.2610939)
\psdots[dotsize=0.2](1.9809375,0.13890626)
\psdots[dotsize=0.2](1.9809375,1.7389063)
\psdots[dotsize=0.2](4.3809376,-2.2610939)
\psdots[dotsize=0.2](4.3809376,0.13890626)
\psdots[dotsize=0.2](4.3809376,1.7389063)
\psdots[dotsize=0.2](5.7809377,1.7389063)
\psdots[dotsize=0.2](5.7809377,0.13890626)
\psdots[dotsize=0.2](5.7809377,-2.2610939)
\psdots[dotsize=0.2](7.7809377,0.13890626)
\psdots[dotsize=0.2](7.7809377,1.7389063)
\usefont{T1}{ptm}{m}{n}
\rput(0.6223438,2.8439062){$\kappa$}
\usefont{T1}{ptm}{m}{n}
\rput(0.5,0.14390624){$\alpha$}
\usefont{T1}{ptm}{m}{n}
\rput(0.4,1.7439063){$\alpha+1$}
\usefont{T1}{ptm}{m}{n}
\rput(2.05,-0.55609375){$\N_{\alpha,0}$}
\usefont{T1}{ptm}{m}{n}
\rput(2.05,2.4439063){$\N_{\alpha+1,0}$}
\usefont{T1}{ptm}{m}{n}
\rput(5.4,2.4439063){$\N_{\alpha+1,j}\in \N_{\alpha+1,j+1}$}
\usefont{T1}{ptm}{m}{n}
\usefont{T1}{ptm}{m}{n}
\rput(5.4,-0.55609375){$\N_{\alpha,j}\hspace{0.15cm}\in\hspace{0.2cm}\N_{\alpha,j+1}$}
\rput(4.4,0.88609375){$\vsubs$}
\rput(5.78,0.88609375){$\vsubs$}
\usefont{T1}{ptm}{m}{n}
\usefont{T1}{ptm}{m}{n}
\rput(8.962344,1.7439063){$\kk_{\alpha+1}$}
\usefont{T1}{ptm}{m}{n}
\rput(8.772344,0.14390624){$\kk_{\alpha}$}
\usefont{T1}{ptm}{m}{n}
\rput(8.772344,-2.0560938){$\kk_{0}$}
\usefont{T1}{ptm}{m}{n}
\rput(2.05,-2.9560938){$\N_{0,0}$}
\usefont{T1}{ptm}{m}{n}
\rput(5.4,-2.9560938){$\N_{0,j}\hspace{0.15cm}\in\hspace{0.2cm}\N_{0,j+1}$}
\usefont{T1}{ptm}{m}{n}
\end{pspicture} 
}
\caption{The matrix of models $\<\N_{{\alpha},j}:{\alpha}<{\kappa},j<{\omega}\>$}
\label{fig:1}

\end{figure}

\medskip

\begin{claim}\label{clm:sageseq}
 $\kk=\bigcup\{\kk_\alpha:\alpha<\kappa\}$ is a sage model for $\kappa$ over $x$.
\end{claim}

\begin{proof}
 Since   $\N_{{\alpha},j}\prec H(\theta)$  and $\N_{{\alpha},j}\subs \N_{{\alpha},j+1}$ by (\ref{Nst-elem})
and (\ref{Nst-monk}) we have $\kk_{\alpha}\prec H(\theta)$. 

Since $\kk_{\alpha}\subs \kk_{\beta}$ for ${\alpha}<{\beta}<{\kappa}$ by  (\ref{Nst-incr}),
we have $\kk_{\alpha}\prec\kk_{\beta}$ for ${\alpha}<{\beta}<{\kappa}$, and so $\kk\prec H(\theta)$.

Next observe that $\br {\kappa};{\omega};\subs \kk$ because $y_{\alpha}\in \kk_{{\alpha}+1}$
by  (\ref{pr:manyparam}).

To show $\br K;{\omega};\subs K$ assume that $A\in \br K;{\omega};$.
Since ${\kappa}^{\omega}={\kappa}$ implies $\cf({\kappa})>{\omega}$, there is ${\alpha}<{\kappa}$
such that $A\subs \kk_{\alpha}$.

Then, by (\ref{pr:manyparam}),  $x_{{\alpha}+1,0}\in \kk_{{\alpha}+1}$,
and so $\kk_{\alpha}\in \kk_{{\alpha}+1}\subs \kk.$
Since $|\kk_{\alpha}|={\mu}$ and ${\mu}+1\subs \kk$, there is a bijection $f:\kk_{\alpha}\to {\mu}$
in $\kk$.  Let $Y=f''A$. Then $Y\in \br {\mu};{\omega};\subs \br{\kappa};{\omega};\subs \kk$.
Thus $A=f^{-1}Y\in \kk$ as well.

So $\kk$ is a countably closed elementary submodel of $H(\theta)$.

Let $\Big\langle\<{\alpha}_{\eta},j_{\eta},{\zeta}_{\eta}\>:{\eta}<{\kappa}\Big\rangle$
be the lexicographically increasing enumeration of the set 
\begin{displaymath}
 \Big\{
 \<{\alpha}+1,j,{\zeta}\>:{\alpha}+1<{\kappa},j<{\omega},{\zeta}<|\kk_{{\alpha}+1,j}|
 \Big\},
\end{displaymath}
and write
\begin{displaymath}
 \mm'_\eta=\mmcal(\kk_{{\alpha}_\eta,j_{\eta}},\zeta_\eta)
\end{displaymath}
for $\eta<{\kappa}$.

Clearly 
\begin{align*}
\kk=\bigcup_{{\alpha}<{\kappa}}\kk_{{\alpha}+1}=
\bigcup_{{\alpha}<{\kappa}}\bigcup_{j<{\omega}}\kk_{{\alpha}+1,j}=&\\
\bigcup_{{\alpha}<{\kappa}}\bigcup_{j<{\omega}}\bigcup_{{\zeta}<|\kk_{{\alpha}+1,j}|}
&\mmcal(\kk_{{\alpha}+1,j},{\zeta})=\bigcup_{{\eta}<{\kappa}}\mm'_{\eta}.
\end{align*}

We need to show that $\<\mm'_\eta:\eta<{\kappa}\>$ is a sage Davies-tree.

If ${\eta}<{\kappa}$ and ${\alpha}_{\eta}={\beta}+1$, then 
\begin{multline*}
 \bigcup_{\sigma<\eta}\mm'_\sigma=\bigcup\{\mm'_\sigma: {\alpha}_\sigma<{\alpha}_\eta\}
 \cup\bigcup\{\mm'_\sigma: {\alpha}_\sigma={\alpha}_\eta\land j_\sigma<j_\eta\}\\
 \cup\bigcup\{\mm'_\sigma: {\alpha}_\sigma={\alpha}_\eta\land j_\sigma=j_\eta\land \zeta_\sigma<\zeta_\eta\}=
 \\
 \bigcup \{\kk_{{\alpha}+1}:{\alpha}+1\le {\beta}\}\cup
 \bigcup \{\N_{{\beta}+1,j}:j<j_{\eta}\}\cup
 \bigcup\{\mmcal(\N_{{\beta}+1,j_\eta},\zeta):\zeta<\zeta_\eta\}=\\
 \kk_{{\beta}}\cup \bigcup \{\N_{{\beta}+1,j}:j<j_{\eta}\}\cup 
 \bigcup\{\nncal(\N_{{\beta}+1,j_\eta},\zeta_\eta,i):i<{\omega}\}.
\end{multline*}
Since $\kk_{{\beta}}$ is \sicc\ by (\ref{pr:manyallclosed}), 
 every $\kk_{{\beta}+1,j}$  and 
every $\nncal(\kk_{{\alpha}_\eta,j_{\eta}},\zeta_\eta,i)$
is countably closed, we have that $\bigcup_{\sigma<\eta}\mm'_\sigma$
 is also \sicc. Furthermore, the parameter $x$ is in all the above models and $[\kappa]^\oo\subseteq \bigcup\{M'_\eta:\eta<\kappa\}$ by (\ref{pr:manyparam}); so $\<M'_\eta:\eta<\kappa\>$ is a high Davies-tree for $\kappa$ over $x$.

 To  show that $\<\mm'_\sigma:\sigma<\eta\>\in \mm'_\eta$ observe that 
if ${\alpha}_{\eta}={\beta}+1$, then 
\begin{align}\label{eq:cut2}
\<\mm'_\sigma:\sigma<\eta\>=v_0^\frown v_1^\frown v_2
\end{align}
where
\begin{align}
v_0=&\<\mm'_\sigma:{\alpha}_\sigma\le {\beta}\>,\\ 
v_1=&\<\mm'_\sigma:{\alpha}_\sigma={\beta}+1, j_{\sigma}<j_{\eta}\>=
\mmcal(\N_{{\beta}+1,0})^\frown\dots{}^\frown\mmcal(\N_{{\beta}+1,j_\eta-1}), \textmd{ and}\\ 
v_2=&\<\mm'_\sigma:{\alpha}_\sigma={\beta}+1,j_{\sigma}=j_{\eta},{\zeta}_{\sigma}<{\zeta}_{\eta}\>=
\<\mmcal(\N_{{\alpha}_{\eta},j_{\eta}},\zeta):\zeta<{\zeta}_{\rho}\>.
\end{align}

First, $v_0=\<\mm'_\sigma:{\alpha}_\sigma\le{\beta}\>$ is the lexicographical enumeration of
$$\mf M_{\beta}=\<\mmcal(\kk_{\gamma+1,i},\zeta ):{\gamma}+1\le {\beta},i<{\omega}, {\zeta}<|\kk_{\gamma+1}|\>.$$
Now, $\mf M_{\beta} \in x_{{\beta}+1,0}\in 
\mmcal (\N_{{\beta}+1,j_{\eta}}, {\zeta}_{\eta})=
\mm'_{\eta}$ implies 
$\mf M_{\beta}\in \mm'_{\eta}$, so
we have
\begin{equation}\label{eq:param2}
 v_0=\<\mm'_\sigma:{\alpha}_\sigma\le{\beta}\>\in \mm'_{\eta}.
\end{equation}


Second, 
$\<\mmcal(\N_{{\beta}+1,i},{\zeta}):i<j_\eta,{\zeta}<|\N_{{\beta}+1,i}|\>
\in {x_{\beta+1,j_{\eta}}}\in \mmcal (\N_{{\beta}+1,j_{\eta}},\zeta_\eta)=\mm'_{\eta}$ by  (\ref{pr:manyparam}), and so 
\begin{equation}
 v_1=\mmcal(\N_{{\beta}+1,0})^\frown\dots{}^\frown\mmcal(\N_{{\beta}+1,j_\eta-1})\in \mm'_{\eta}.
\end{equation}
%
%

Since $\mmcal(\N_{{\beta}+1,j_{\eta}})$ is a sage  Davies-tree, we have 
\begin{equation}\label{eq:sage2}
 v_2=\<\mmcal(\N_{{\beta}+1,j_{\eta}},{\zeta}):{\zeta}<{\zeta}_{\eta}\>\in  
 \mmcal(\kk_{{\beta}+1,j_{\eta}},{\zeta}_{\eta})=
 \mm'_{\eta}.
\end{equation}

Thus $v_0,v_1,v_2\in \mm'_{\eta}$ and so 
$\<\mm'_\sigma:\sigma<\eta\>=v_0^\frown v_1^\frown v_2\in \mm'_{\eta}$ as required. This proves that $\<M'_\eta:\eta<\kappa\>$ is a sage Davies-tree for $\kappa$ over $x$.

\end{proof}

So, our job is to build the matrix $\<\N_{{\alpha},j}:{\alpha}<{\kappa},j<{\omega}\>$ with the above properties (one mainly needs to keep Figure \ref{fig:1} in mind). The induction is naturally divided into four cases, the first two being easier and the second two a bit more involved. However, the proofs are fairly straightforward diagram and definition chasings so we might leave some details to the reader.
\medskip

\textbf{Case 1: $\alpha=0$.} Let $\N_{0,j}=\sm({\mu}_j,\{{\mu}_j+1,x\})$ using the inductive assumption for $\mu_j<\kappa$.  Then  $|\N_{0,j}|={\mu}_j$ and properties (A)--(\ref{pr:last}) are easily checked.

\medskip

\textbf{Case 2: successor steps from $\alpha$ to $\alpha+1$.} We aim to define $\<\N_{\alpha+1,j}:j<\oo\>$.  

We follow (\ref{pr:manyparam}):
writing 
\begin{multline*}
x_{{\alpha}+1,j}=\{x,y_{\alpha},\<\mmcal(\N_{{\gamma}+1,j}):{\gamma}+1\le {\alpha},j<{\omega},
{\zeta}<|\N_{{\gamma}+1,j}|\>,\\
\<\mmcal(\N_{{\alpha}+1,i},{\zeta}):i<j,{\zeta}<|\N_{{\alpha}+1,i}|\>\}  
 \end{multline*}
we take 
$$\N_{{\alpha}+1,j}=
\sm({\mu}_{j}, x_{{\alpha}+1,j}).$$

Again, properties (A)--(\ref{pr:last}) are easy to verify.

\medskip

\textbf{Case 3: $\alpha$ is a limit and $\sup C'_\alpha<\alpha$.} Then $C_{\alpha}\setm \sup {C_{\alpha}}'$ must have order type ${\omega}$, so  
$C_{\alpha}\setm \sup {C_{\alpha}}'$ can be enumerated 
as $${\beta}_0<{\beta}_1<\dots.$$
Write ${\gamma}_i={\beta}_i+1$ and fix a strictly increasing sequence of natural numbers $k_0<k_1<\dots$ such that 
\begin{displaymath}
  \N_{{\gamma_i},j}\subs \N_{{\gamma_{i+1}},j} \textmd{ for all }j\ge k_i. 
\end{displaymath}

This is possible by applying (\ref{Nst-incr}).  Now let 
\begin{equation}\label{eq:Naknotsup}
\N_{{\alpha},j}=\left\{
\begin{array}{ll}
\N_{\gamma_0,j}&\mbox{if $j<k_0,$ and}\\\\ 
\N_{{\gamma_{i+1}},j}&\mbox{if $k_i\le j<k_{i+1}$.}
\end{array}
\right . 
\end{equation}

In other words, we take finite intervals from the sequences $\<\N_{\gamma_i,j}:j<\oo\>$ to form the $\alpha^{th}$ sequence; see Figure \ref{fig:2}.
  \begin{figure}[H]

   \centering
%
\scalebox{1} 
{
\begin{pspicture}(0,-3.2789063)(13.565157,3.2589064)
\psline[linewidth=0.04cm,arrowsize=0.05291667cm 2.0,arrowlength=1.4,arrowinset=0.4]{->}(1.541875,-1.8810937)(1.56,3.2389061)
\psdots[dotsize=0.2](2.561875,-1.5810939)
\psdots[dotsize=0.2](3.361875,-1.5810939)
\psdots[dotsize=0.2](4.161875,-1.5810939)
\psframe[linewidth=0.04,linestyle=dashed,dash=0.16cm 0.16cm,dimen=outer](4.56,-1.1610937)(2.16,-1.9610938)
\psdots[dotsize=0.2](10.561875,-1.5810939)
\psdots[dotsize=0.2](10.561875,0.8189063)
\psdots[dotsize=0.2](10.561875,2.4189065)
\usefont{T1}{ptm}{m}{n}
\rput(11.534688,2.4239063){$\kk_{\alpha}$}
\usefont{T1}{ptm}{m}{n}
\rput(11.534688,0.8239063){$\kk_{\gamma_1}$}
\usefont{T1}{ptm}{m}{n}
\rput(11.534688,-1.60937){$\kk_{\gamma_0}$}
\psdots[dotsize=0.2](5.961875,-1.5810939)
\psdots[dotsize=0.2](7.361875,-1.5810939)
\psdots[dotsize=0.2](5.961875,0.8189063)
\psdots[dotsize=0.2](7.361875,0.8189063)
\psdots[dotsize=0.2](5.961875,2.4189065)
\psdots[dotsize=0.2](7.361875,2.4189065)
\psframe[linewidth=0.04,linestyle=dashed,dash=0.16cm 0.16cm,dimen=outer](7.76,1.2389063)(5.56,0.43890625)
\psframe[linewidth=0.04,linestyle=dashed,dash=0.16cm 0.16cm,dimen=outer](7.76,2.8389063)(5.56,2.0389063)
\psbezier[linewidth=0.02,linestyle=dashed,dash=0.16cm 0.16cm,arrowsize=0.05291667cm 2.0,arrowlength=1.4,arrowinset=0.4]{->}(7.96,0.93312186)(8.131429,1.0194633)(8.2,1.2969722)(8.2,1.5255089)(8.2,1.7540455)(8.131429,1.881171)(7.96,1.9989063)
\usefont{T1}{ptm}{m}{n}
\rput(0.8,0.8239063){$\gamma_1$}
\usefont{T1}{ptm}{m}{n}
\rput(0.8,2.4239063){$\alpha$}
\psdots[dotsize=0.2](2.561875,2.4189062)
\psdots[dotsize=0.2](3.361875,2.4189062)
\psdots[dotsize=0.2](4.161875,2.4189062)
\psframe[linewidth=0.04,linestyle=dashed,dash=0.16cm 0.16cm,dimen=outer](4.56,2.8389063)(2.16,2.0389063)
\psbezier[linewidth=0.02,linestyle=dashed,dash=0.16cm 0.16cm,arrowsize=0.05291667cm 2.0,arrowlength=1.4,arrowinset=0.4]{->}(4.753684,-1.3010937)(5.02,-0.50677115)(5.0277896,-0.13469376)(5.0538945,0.21090625)(5.08,0.5565063)(4.9829564,1.5069063)(4.741818,1.9389062)
\usefont{T1}{ptm}{m}{n}
\rput(0.8,-1.5760937){$\gamma_0$}
\psline[linewidth=0.04cm,arrowsize=0.05291667cm 2.0,arrowlength=1.4,arrowinset=0.4]{->}(1.76,-2.3610938)(9.96,-2.3610938)
\psline[linewidth=0.04cm](5.96,-2.1610937)(5.96,-2.5610938)
\psline[linewidth=0.04cm](9.16,-2.1610937)(9.16,-2.5610938)
\usefont{T1}{ptm}{m}{n}
\rput(6,-3.0560937){$k_0$}
\usefont{T1}{ptm}{m}{n}
\rput(9.191406,-3.0560937){$k_1$}
\psline[linewidth=0.04,linestyle=dashed,dash=0.16cm 0.16cm](9.76,2.8389063)(8.76,2.8389063)(8.76,2.0389063)(9.76,2.0389063)
\psdots[dotsize=0.2](9.161875,2.4189065)
\usefont{T1}{ptm}{m}{n}
\rput(5.9,-0.33609375){$\vsubs$}
\usefont{T1}{ptm}{m}{n}
\rput(7.35,-0.31609374){$\vsubs$}
\usefont{T1}{ptm}{m}{n}
\rput(6.6,-1.5560937){$\subs$}
\usefont{T1}{ptm}{m}{n}
\rput(2.95,-1.5760938){$\subs$}
\usefont{T1}{ptm}{m}{n}
\rput(3.75,-1.5560937){$\subs$}
\usefont{T1}{ptm}{m}{n}
\rput(5.1114063,-1.5560937){$\subs$}
\usefont{T1}{ptm}{m}{n}
\rput(6.6714063,0.82390624){$\subs$}
\psline[linewidth=0.04cm](1.36,0.8389062)(1.76,0.8389062)
\psline[linewidth=0.04cm](1.36,-1.5610938)(1.76,-1.5610938)
\psline[linewidth=0.04cm](1.36,2.4389062)(1.76,2.4389062)
\end{pspicture} }

  \caption{Case 3 and the construction of $\<\N_{\alpha,j}:j<\oo\>$}
\label{fig:2}

\end{figure}
 Let us go through properties (A)--(\ref{pr:last}) now.

Property (\ref{Nst-elem}) holds since $\N_{{\alpha},j}=\N_{{\beta}+1,j}$ for some ${\beta}\in C_{\alpha}$.

\medskip
To check (\ref{Nst-monk})
assume  $k_i\le j<k_{i+1}$.  Then 
\begin{displaymath}
 \N_{{\alpha},j}=\N_{{\gamma_{i+1}},j}\subs \N_{{\gamma_{i+1}}, j+1}.         
\end{displaymath}

If  $j+1<k_{i+1}$ also then
\begin{displaymath}
 \N_{{\alpha},j+1}=\N_{{\gamma_{i+1}}, j+1}         
\end{displaymath}
so $\N_{{\alpha},j}\subs\N_{{\alpha},j+1} $. If $j+1=k_{i+1}$
then $\N_{{\gamma_{i+1}}, j+1}\subs \N_{{\gamma}_{i+2}, j+1}=\N_{{\alpha},j+1}$, and so $\N_{{\alpha},j}\subs\N_{{\alpha},j+1} $.
In turn, (\ref{Nst-monk}) holds.

\medskip 

To check 
 (\ref{Nst-incr}) let ${\beta}<{\alpha}$. Pick $i$ such that ${\beta}<{\beta}_i$; we would like to show that 
if $k_{\beta,\alpha}= \max\{k_i, k_{{\beta},{\gamma_i}}\}$ works. So suppose that $j\geq k_{\beta,\alpha}$  and  $k_\ell\le j<k_{\ell+1}$ for some $\ell<\oo$. Now  
\begin{displaymath}
 \N_{{\beta},j}\subs \N_{{\gamma_i},j}\subs \N_{{\gamma_{i+1}},j}\subs\dots\subs     \N_{{\gamma}_{\ell+1},j}=
 \N_{{\alpha},j}  
\end{displaymath}
because $j\ge k_{{\beta},{\gamma_i}}$ and $j\ge k_i, k_{i+1},\dots k_\ell$.

\medskip 
To check (\ref{pr:extrahomog})  let ${\gamma}\in C'_{\alpha}$ and recall that ${\beta}_0=\sup C_{\alpha}'<{\alpha}$. 

Assume first that ${\gamma}<{\beta}_0$.
Then $C_{{\beta}_0}=C_{\alpha}\cap {{\beta}_0}$ and so ${\gamma}\in C_{{\beta}_0}'$.
Thus $\N_{{\gamma}+1,j}\subs \N_{{\beta}_0,j}$  by the inductive assumption (\ref{pr:extrahomog}). However $\N_{{\beta}_0,j}\subs \N_{\beta_0+1,j}=\N_{\gamma_0,j}$ by (\ref{Nst-monk}). Thus 
$\N_{{\gamma}+1,j}\subs \N_{{\gamma}_0,j}$ for an arbitrary ${\gamma}\in C_{\alpha}'\cap \beta_0$.

Assume that $k_i\le j<k_{i+1}$. Then $j\ge k_0\dots, k_i$ and so 
\begin{displaymath}
\N_{{\gamma}_0,j}\subs \N_{{\gamma}_1,j}\subs \dots \subs \N_{{\gamma_{i+1}},j}=\N_{{\alpha},j}.  
\end{displaymath}
Hence $\N_{\gamma+1,j}\subs \N_{\alpha,j}$ as required. 

If $\gamma=\beta_0$ then $\N_{\gamma+1,j}=\N_{\gamma_0,j}\subs \N_{{\alpha},j}$ by the above calculation. So ultimately (\ref{pr:extrahomog}) holds.

 \medskip

Property (\ref{pr:manyomegaclosed}) holds because  $\N_{{\alpha},j}=\N_{{\beta}+1,j}$ for some
${\beta}<{\alpha}$ and $[\N_{{\beta}+1,j}]^\oo\subs \N_{{\beta}+1,j} $ by the inductive assumption (\ref{pr:manyomegaclosed}).

\medskip

Property  (\ref{pr:manyallclosed}) holds because
\begin{displaymath}
 \kk_{\alpha}=\bigcup_{j<{\omega}}\N_{{\alpha},j}
\end{displaymath}
and $\N_{{\alpha},j}$ is countably closed by (\ref{pr:manyomegaclosed}).

\medskip

Finally, to check Property (\ref{pr:Ncont}) first note that 
$\kk_{\alpha}\supseteq \bigcup_{{\beta}<{\alpha}}\kk_{\beta}$ by (\ref{Nst-incr}). On the other hand
\begin{displaymath}
 \kk_{\alpha}=\bigcup_{j\in {\omega}}\N_{{\alpha},j}\subs 
 \bigcup_{i,j\in {\omega}}\N_{{\gamma_i},j}=\bigcup_{i\in \oo}\kk_{\gamma_i}\subs 
 \bigcup_{{\beta}<{\alpha}}\kk_{\beta}
\end{displaymath}  by (\ref{eq:Naknotsup}).

This finishes Case 3.

\medskip
\textbf{Case 4: $\alpha$ is a limit and $\sup C'_\alpha=\alpha$.} Now, instead of previous rows, we use the $j^{\textmd{th}}$ column of the already constructed matrix $\<\N_{\zeta,i}:\zeta<\alpha,i<\oo\>$ to provide the new model $\N_{\alpha,j}$. That is, we let

\begin{equation}\label{eq:Naksup}
 \N_{{\alpha},j}=
 \left\{
 \begin{array}{ll}
 \bigcup_{{\beta}\in C_{\alpha}'}\N_{{\beta}+1,j}&\mbox{if $|C_{\alpha}|\le {\mu}_j$, and}\\\\
\N_{0,j}&\mbox{if $|C_{\alpha}|> {\mu}_j$.}
 \end{array}
 \right .
\end{equation}

\medskip
Note that $|C_{\alpha}|\le {\mu}_j$ for almost all $j<\oo$ since $|C_\alpha|<\mu$. So, since $\<\mu_j:j<\oo\>$ is increasing, the set  $\{j<\oo:\N_{\alpha,j}=\N_{0,j}\}$ is a finite initial segment of $\oo$. 

We need to check that properties  (A)--(\ref{pr:last}) are satisfied, so lets start with (\ref{Nst-elem}):  if $\N_{\alpha,j}=\N_{0,j}$ then this is clear so we can assume $|C_{\alpha}|\le {\mu}_j$. This and $|\N_{\beta+1,j}|\leq \mu_j$  implies that $|\N_{\alpha,j}|\leq \mu_j$. Furthermore, $\mu_j+1\subseteq \N_{\beta+1,j}\subseteq \N_{\alpha,j}$ too. 

We will show that $ \N_{\alpha,j}$ is an increasing union of elementary submodels of $H(\theta)$ and so an elementary submodel of $H(\theta)$ itself. If ${\beta}<{\gamma}\in C_{\alpha}'$ then $C_{\gamma}=C_{\alpha}\cap {\gamma}$
and so ${\beta}\in C_{\gamma}'$. Thus $\N_{{\beta}+1,j}\subs \N_{{\gamma},j}\subs \N_{{\gamma}+1,j}$ by 
 the inductive hypothesis (\ref{pr:extrahomog}) and (\ref{Nst-monk}). This finishes the proof of (\ref{Nst-elem}).
 
 \medskip

 Property (\ref{Nst-monk}) is clear from the inductive assumption (\ref{Nst-monk}).  

  \medskip
 
Property (\ref{Nst-incr}): Given ${\beta}<{\alpha}$, we pick ${\gamma}\in C'_{\alpha}\setm (\beta+1)$. If $j\ge k_{{\beta},{\gamma}}$ then $\N_{{\beta},j}\subs\N_{{\gamma},j}\subs \N_{{\gamma}+1,j}\subs \N_{{\alpha},j}$. So  $k_{\beta,\alpha}=k_{\beta,\gamma}$ satisfies the requirements.

\medskip
 
Property (\ref{pr:extrahomog}) holds by definition and Property (\ref{pr:manyparam}) is void in the current case.

\medskip

Property (\ref{pr:manyomegaclosed}): we can assume that $\cf({\alpha})>{\omega}$ otherwise there is nothing to prove. Consider  an arbitrary   $a\in \br \N_{{\alpha},j};{\omega};$. Then there is $I\in \br C_{\alpha}';{\omega};$
such that   $a\subs \bigcup_{{\beta}\in I}\N_{{\beta}+1,j}$. 

Pick  ${\gamma}\in C_{\alpha}'\setm (\sup I+1)$ and note that $I\subseteq C'_\gamma$ as well (since $C_\gamma=\gamma\cap C_\alpha$). In turn, $\bigcup_{{\beta}\in I}\N_{{\beta}+1,j}\subs \N_{\gamma,j}$ by (\ref{pr:extrahomog}); so $a\subs \N_{\gamma,j}\subseteq \N_{\gamma+1,j}$. $\N_{\gamma+1,j}$ is countably closed so $a\in \N_{\gamma+1,j}$. Finally, $\N_{\gamma+1,j}\subs \N_{\alpha,j}$ by the definition of $\N_{\alpha,j}$ so $a\in \N_{\alpha,j}$ as desired.

\medskip

Property (\ref{pr:manyallclosed}): we assume that $\cf(\alpha)=\oo$ otherwise 
(\ref{pr:manyomegaclosed}) implies that $\N_{{\alpha},j}$ are countably closed.

Pick a sequence of  ordinals  ${\beta}_0<{\beta}_1<\dots$ in  $C_{\alpha}'$ which is cofinal in $\alpha$ and let $\gamma_i=\beta_i+1$. Fix  a strictly increasing sequence of natural numbers $k_0<k_1<\dots$ as well such that 
\begin{equation}
 \N_{{\gamma}_i,j}\subs \N_{{\gamma}_{i+1},j} \textmd{ for all }j\ge k_i. 
\end{equation}

Let 
\begin{displaymath}
\kkk_{{\alpha},j}=\left\{
\begin{array}{ll}
\N_{{\gamma}_0,j}&\mbox{if $j<k_0,$ and}\\\\ 
\N_{{\gamma}_{i+1},j}&\mbox{if $k_i\le j<k_{i+1}$.}
\end{array}
\right . 
\end{displaymath}


To prove (\ref{pr:manyallclosed}) we will show  that  $\kk_{\alpha}=\bigcup_{j<{\omega}}\kkk_{{\alpha},j}$.  First, note that $(\kkk_{\alpha,j})_{j<\oo}$ is increasing, and $\kkk_{\alpha,j}\subseteq \N_{\alpha,j}\subseteq \kk_\alpha$ for almost all $j<\oo$ (whenever $|C_\alpha|\leq \mu_j$). Hence  $\bigcup_{j<{\omega}}\kkk_{{\alpha},j}\subseteq \kk_\alpha$.


Now, assume that $y\in \kk_{{\alpha}}$; then $y\in \N_{{\alpha},j}$ for some $j<\oo$
and so $y\in \N_{{\beta}+1,j}$ for some ${\beta}\in C'_{\alpha}$. Pick $i<\oo$ such that ${\beta}+1<{\gamma}_i$ and let $\ell=\max\{k_i, k_{{\beta}+1,{\gamma}_i},j\}+1$. We would like to show
\begin{claim}
 $y\in \kkk_{\alpha,\ell}$.
\end{claim}
\begin{proof}First, note that
\begin{displaymath}
 y\in \N_{{\beta}+1,j}\subs \N_{{\beta}+1,\ell}\subs \N_{{\gamma}_i,\ell}
\end{displaymath} 
as $\ell>k_{{\beta}+1,{\gamma}_i},j$. 
Also, there is an $i'<\oo$ (at least $i$) such that  $k_{i'}\le \ell<k_{{i'}+1}$ and 
then   $\kkk_{{\alpha},\ell}=\N_{{\gamma}_{{i'}+1},\ell}$. 
Finally, $\ell\ge k_i, k_{i+1},\dots k_{i'}$ implies that 
\begin{displaymath}
 \N_{{\gamma}_i,\ell}\subs \N_{{\gamma}_{i+1},\ell }\subs \dots\subs     
 \N_{{\gamma}_{{i'}+1},\ell}=\kkk_{{\alpha},\ell}.  
\end{displaymath}

Thus $y\in \kkk_{{\alpha},\ell}$.
\end{proof}

So we verified property  (\ref{pr:manyallclosed}).

\medskip

Property (\ref{pr:Ncont}): first, $\kk_{\alpha}\supseteq \bigcup_{{\beta}<{\alpha}}\kk_{\beta}$ 
immediately follows from (\ref{Nst-incr}). 

On the other hand, 
\begin{displaymath}
 \kk_{\alpha}=\bigcup_{j< {\omega}}\N_{{\alpha},j}=
 \bigcup_{j< {\omega}}\bigcup_{\beta\in C_\alpha'}\N_{{\beta}+1,j}= 
 \bigcup_{\beta\in C_\alpha'}\bigcup_{j< {\omega}}\N_{{\beta}+1,j}=
 \bigcup_{\beta\in C_\alpha'}\kk_{\beta+1}\subs \bigcup_{{\beta}<{\alpha}}\kk_{\beta}
\end{displaymath} by (\ref{eq:Naksup}).

\medskip

This concludes Case 4 and hence the inductive construction of the matrix $\<\N_{\alpha,j}:\alpha<\kappa,j<\oo\>$. 
In the turn, we constructed the desired sage model in Case II as well; this finishes the proof of Theorem \ref{tm:niceomegadavies}.
\end{proof}

\medskip
 
Let us remark that if one only aims to construct a high Davies-tree (which is not necessarily sage) then weaker assumptions suffice. We say that $\Box^{***}_{{\omega}_1,\mu}$ holds if there is a sequence $(C_\alpha)_{\alpha<\mu^+}$ and a club $D\subseteq\mu^+$ such that  for every $\alpha\in D$ with $\cf(\alpha)\geq \omg$ the following holds:

\begin{enumerate}
 \item[(v1)] $C_\alpha\subseteq\alpha$ and $C_\alpha$ is unbounded in 
$\alpha$;
\item[(v2)] for all $V\in[C_\alpha]^{{\omega}}$ there is 
$\beta<\alpha$ such that  $V\subseteq C_\beta\in [C_\alpha]^{{\omega}}$.
\end{enumerate}

We only state the following theorem without proof:

\begin{theorem}\label{thm:onlyhigh} There is a high Davies-tree for ${\kappa}$ over $x$ whenever
\begin{enumerate}
 \item\label{neccond} ${\kappa}={\kappa}^{\omega}$, and
 \item\label{mupluss} ${{\mu}}^{\omega}={\mu}^+$ and $\Box^{***}_{{\omega}_1,\mu}$ holds for all  $\mu$ with $\mf c< {\mu}<{\kappa}$ and  $\cf({\mu})={\omega}$.
 \end{enumerate} 
\end{theorem}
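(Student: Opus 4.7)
The plan is to adapt the matrix-of-models construction from the proof of Theorem \ref{tm:niceomegadavies}, stripped of the sage bookkeeping and with $\Box_\mu$ replaced by $\Box^{***}_{\omega_1,\mu}$. I would induct on $\kappa$ using the same dichotomy: either $\kappa$ is $\omega$-inaccessible, or $\kappa=\mu^+$ for some $\mu>\mf c$ with $\cf(\mu)=\omega$. This dichotomy still covers every $\kappa=\kappa^\omega$: if $\kappa$ is not $\omega$-inaccessible, the minimal $\mu<\kappa$ with $\mu^\omega=\kappa$ satisfies $\cf(\mu)=\omega$, and the hypothesis $\mu^\omega=\mu^+$ then forces $\kappa=\mu^+$.

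For the $\omega$-inaccessible case, the construction from Case I of Theorem \ref{tm:niceomegadavies} works essentially unchanged: build an increasing chain $\<K_\alpha:\alpha\le\kappa\>$ with $|K_\alpha|=(\max(\mf c,|\alpha|))^\omega$ whose successors come from the inductive hypothesis. The parameter $x_{\alpha+1}$ can now be stripped down to $\{x,y_\alpha\}$ since we no longer need to encode earlier Davies-trees to enforce sage-ness. Initial segments of the concatenation of the component Davies-trees are still $\sigma$-c.c.\ unions of countably closed models, which is precisely the relaxed requirement for being a (non-sage) high Davies-tree.

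Case II is where the real adaptation occurs. Fix $(C_\alpha)_{\alpha<\mu^+}$ and the club $D\subseteq \mu^+$ witnessing $\Box^{***}_{\omega_1,\mu}$, and build a matrix $\<N_{\alpha,j}:\alpha<\kappa,j<\omega\>$ maintaining analogues of (A), (B), (C), (F), (G), (H) of the sage proof while dropping the sage-parameter accumulation in (E). The successor and base cases proceed as in Cases 1 and 2, with $N_{\alpha+1,j}$ a plain high Davies-tree of size $\mu_j$ over $\{x,y_\alpha\}$. Limit $\alpha$ of cofinality $\omega$ are treated exactly by Case 3, since that case only requires a cofinal $\omega$-sequence in $\alpha$ and never used the full coherence of $(C_\alpha)$. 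For $\alpha\in D$ with $\cf(\alpha)\ge\omega_1$, set $N_{\alpha,j}=\bigcup_{\beta\in C_\alpha}N_{\beta+1,j}$ when $|C_\alpha|\le\mu_j$ and $N_{\alpha,j}=N_{0,j}$ otherwise. For $\alpha\notin D$ with $\cf(\alpha)\ge\omega_1$ we exploit club-ness of $D$: since $D$ is closed, $\sup(D\cap\alpha)<\alpha$, and one can patch between $\sup(D\cap\alpha)$ and $\alpha$ by a secondary induction on the small end-extension.

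The crux is the $\omega$-closure analogue of (F) at limits of uncountable cofinality. Given $a\in [N_{\alpha,j}]^\omega$ with $\alpha\in D$ and $\cf(\alpha)\ge\omega_1$, pick a countable $I\subseteq C_\alpha$ with $a\subseteq\bigcup_{\beta\in I}N_{\beta+1,j}$; property (v2) furnishes $\gamma<\alpha$ with $I\subseteq C_\gamma\in[C_\alpha]^\omega$, and a strengthened form of (C) yields $N_{\beta+1,j}\subseteq N_{\gamma+1,j}$ for $\beta\in I$ and $j$ large, placing $a$ in $N_{\gamma+1,j}$ by induction. This replaces the original appeal to the coherence $C_\gamma=C_\alpha\cap\gamma$. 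The main obstacle is that, without full coherence, the family $\{N_{\beta+1,j}:\beta\in C_\alpha\}$ need not form an increasing chain, so one must show its union is directed; this is achieved by applying (v2) to pairs $\{\beta_1,\beta_2\}\subseteq C_\alpha$ to manufacture common upper bounds $\gamma<\alpha$ with $N_{\beta_i+1,j}\subseteq N_{\gamma+1,j}$ for $j$ sufficiently large. Once these modifications are in place, the lexicographic-flattening argument of Claim \ref{clm:sageseq} carries over essentially verbatim, now free of the parameter-chasing needed to guarantee $\<M_\sigma:\sigma<\eta\>\in M_\eta$, yielding the desired high Davies-tree for $\kappa$ over $x$.
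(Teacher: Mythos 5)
The paper gives no proof of Theorem \ref{thm:onlyhigh} at all --- it is explicitly ``stated without proof'' --- so there is nothing to compare your argument against; I can only judge it on its merits as an adaptation of the proof of Theorem \ref{tm:niceomegadavies}. Your overall plan (same dichotomy, same matrix, drop the sage bookkeeping, use (v2) in place of coherence) is the natural one, and you have correctly located the crux. But the resolution you offer at the crux does not work as stated. In the original Case 4, the family $\{\N_{\beta+1,j}:\beta\in C'_\alpha\}$ is an increasing chain \emph{at each fixed column $j$} with $|C_\alpha|\le\mu_j$, because coherence ($C_\gamma=C_\alpha\cap\gamma$) feeds into property (D), which gives $\N_{\gamma+1,j}\subs \N_{\alpha,j}$ at that same $j$, uniformly in $\gamma$. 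Your substitute --- apply (v2) to pairs $\{\beta_1,\beta_2\}$ to get $\gamma$ with $\{\beta_1,\beta_2\}\subs C_\gamma$, then invoke ``a strengthened form of (C)'' to get $\N_{\beta_i+1,j}\subs\N_{\gamma+1,j}$ for $j$ sufficiently large --- only yields containment above a threshold $k$ that depends on the pair. Since $C_\alpha$ may have size up to $\mu$, these thresholds cannot be uniformized, so at a \emph{fixed} $j$ the family need not be directed and $\bigcup_{\beta\in C_\alpha}\N_{\beta+1,j}$ need not be an elementary submodel, which kills property (A) for the new row. What is actually needed is a genuine replacement for (D) --- e.g.\ ``if $C_\beta\in\br C_\alpha;\le\omega;$ then $\N_{\beta+1,j}\subs\N_{\alpha,j}$ for every $j$ with $|C_\alpha|\le\mu_j$'' --- formulated so that it can be \emph{maintained through the recursion}, in particular at the ordinals $\beta$ produced by (v2), whose own $C_\beta$ is merely a countable subset of $C_\alpha$ and carries no unboundedness or coherence guarantees. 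That is where all the work of this theorem lies, and your sketch leaves it as a named placeholder.

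Two smaller but still genuine problems. First, stripping the parameter down to $\{x,y_\alpha\}$ in Case I breaks the proof: the containments $\kk_\alpha\subs\kk_{\alpha+1}$, the continuity $\kk_\beta=\bigcup_{\alpha+1\le\beta}\kk_{\alpha+1}$, and the $\sigma$-c.c.\ decomposition of initial segments all rely on each new model containing (hence, having size $\ge\mf c$ and $\mf c\subs\kk_{\alpha+1}$, covering) the previous ones, which in the original comes from putting the history into $x_{\alpha+1}$. Moreover the induction requires the unions obtained from the inductive hypothesis at smaller cardinals to be countably closed elementary submodels, which is \emph{not} part of the definition of a high Davies-tree; you must strengthen the statement being proved by induction (an analogue of the ``sage model'' notion without clause \ref{pp3}). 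Second, the case $\alpha\notin D$ with $\cf(\alpha)\ge\omega_1$ is not ``small'': the tail $(\sup(D\cap\alpha),\alpha)$ is an interval of uncountable cofinality and size up to $\mu$ on which no square-like structure is available, so ``a secondary induction on the small end-extension'' is not an argument --- this case needs an explicit device (e.g.\ re-indexing along $D$, or a separate closure argument) that you have not supplied.
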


\section{Final thoughts and acknowledgements}

We hope that the reader found the above results and proofs as entertaining and instrumental as we did when first working through these diverse topics. Our goal with this paper was to demonstrate that Davies-trees and high Davies-trees can provide a general framework for solving combinatorial problems by the \emph{most straightforward approach}: list your objectives and then meet these goals one by one inductively. In places where originally cumbersome inductions were applied, Davies-trees and high Davies-trees allow a reduction to the simplest cases i.e. one needs to deal with countable or size $\mf c$ approximations of the final structure even when the goal is to construct something of much bigger size. It is our belief that this technique will find its well deserved place in many more proofs in the future.  

\medskip

Part of this project was carried out at the University of Toronto during the first author's graduate studies. We thank D. Milovich, W. Weiss and the Toronto set theory group for fruitful discussions and several useful comments. Let us thank N. Barton, R. Carroy and S. Friedman for further advice on improving our exposition. Finally, we are deeply thankful for the anonymous referee for his/her careful reading and many insightful advice.

\medskip

The first author was supported in part by the Ontario Trillium Scholarship and the FWF Grant I1921. The preparation of this paper was also supported by NKFIH grant no. K113047.

  \bibliographystyle{asl}

\end{document}